\newtheoremstyle{theorem}
  {10pt}		  
  {10pt}  
  {\sl}  
  {\parindent}     
  {\bf}  
  {. }    
  { }    
  {}     
\theoremstyle{theorem}
\newtheorem{theorem}{Theorem}
\newtheorem{corollary}{Corollary}
\newtheorem{proposition}{Proposition}
\newtheorem{remark}{Remark}
\newtheorem{example}{Example}
\newtheoremstyle{defi}
  {10pt}		  
  {10pt}  
  {\rm}  
  {\parindent}     
  {\bf}  
  {. }    
  { }    
  {}     
\theoremstyle{defi}
\newtheorem{definition}{Definition}
\newcommand{\naturals}{\mathbb{N}}
\newcommand{\lengths}{\mathfrak{M}}
\newcommand{\clengths}{\mathfrak{M}'}
\newcommand{\complex}{\mathbb{C}}
\newcommand{\length}{\ell}
\newcommand{\nul}{{\rm nul}}
\newcommand{\rank}{{\rm rank}}
\newcommand{\defic}{{\rm def}}
\newcommand{\RS}{{\rm RS}}
\newcommand{\LNS}{{\rm LNS}}
\newcommand{\RNS}{{\rm RNS}}
\newcommand{\card}{{\rm card}}
\newcommand{\Span}{{\rm span}}
\newcommand{\RFM}{\rm \mathbb{R}\mathbb{F}\mathbb{M}_\omega({\complex})}
\newcommand{\Com}{{\complex}^\infty}
\newcommand{\QHF}{{\rm \mathbf{H}}}
\newcommand{\A}{{\rm \mathbf{A}}}
\newcommand{\B}{{\rm \mathbf{B}}}
\newcommand{\Q}{{\rm \mathbf{Q}}}
\newcommand{\G}{{\rm \mathbf{G}}}
\newcommand{\C}{{\rm \mathbf{C}}}
\newcommand{\e}{{\rm \mathbf{e}}}
\newcommand{\I}{{\rm \mathbf{I}}}
\newcommand{\R}{{\rm \mathbf{R}}}
\begin{document}
\bibliographystyle{plain}

\title{\textbf{\Large{The Solution of Row-Finite Linear Systems with the Infinite Gauss-Jordan Elimination}\\ 
\normalsize{The Case of Linear Difference Equations with Variable Coefficients}}
}
\author{Alexandros G. Paraskevopoulos\\
\small The Center for Research and Applications of Nonlinear Systems (CRANS)\\
\small Department of Mathematics, Division of Applied Analysis,\\
\small University of Patras, 26500 Patra, Greece}

\maketitle

\begin{abstract} The construction of the general solution sequence of row-finite linear systems is accomplished by implementing -ad infinitum- the Gauss-Jordan algorithm under a rightmost pivot elimination strategy. The algorithm generates a basis (finite or Schauder) of the homogeneous solution space for row-finite systems.
The infinite Gaussian elimination part of the algorithm solves linear difference equations with variable coefficients of regular order, including equations of constant order and of ascending order.
The general solution thus obtained can be expressed as a single Hessenbergian.\\

{\bf AMS Subject Classification:} 15A21, 16S50, 65F05\\

{\bf Key Words and Phrases:}
Row-Finite Systems, Infinite Gauss-Jordan Elimination, Infinite Gaussian elimination, Hermite Form, Row-Reduced Form, Difference Equation, Linear Recurrence, Variable Coefficients.
\end{abstract}

\section{Introduction} \label{intro-sec} 
The Gaussian elimination process, the most powerful and long-standing method for solving systems of linear equations since ancient China (200 B.C.) and up to the modern era (see ~\cite{Ch:Ha,Ho:Gr}), has been implemented under a leftmost pivot elimination strategy. 
The Gaussian combined with the Jordan elimination process and with row permutations provide the (upper) row-reduced echelon form (RREF) of a matrix. This is the standard row canonical form of finite matrices, meaning that every finite matrix has a unique row-equivalent RREF. The choice of the RREF as target matrix, drives the elimination process to the use of the leftmost entries as pivot elements, in all algorithm formulations available for finite matrices.
Even though the lower row-reduced echelon form (LRREF) -produced through the rightmost pivot elimination\footnote{Introduced in earlier work, see A. G. Paraskevopoulos,  The Infinite Gauss-Jordan Elimination on $\omega\times\omega$ Row-Finite Matrices~\cite{Pa:IGJ}.}- is also row canonical, the upper row-reduced echelon form (URREF) is the one unilaterally recognised as the identical to the RREF. But the URREF as target matrix prevents the application of the Gauss-Jordan elimination to row-finite systems, because a wide range of row-finite matrices are not left-associates (or row-equivalent) with row-finite matrices having zero entries above and below leftmost 1s. This is an essential postulate to matrices in URREF, (see example \ref{sec:Example1}).

Following earlier work of Toeplitz~\cite{To:Auf} (1909), Fulkerson introduced, in his doctoral thesis~\cite{Fu:Th} (1951) the notion of the ``quasi-Hermite form" (QHF) of a row-finite matrix, establishing the existence and ``almost uniqueness" of such matrix forms. Yet, this form has remained largely ignored for a long period\footnote{Fulkerson's thesis remained hidden in a university storage of Wisconsin since 1951. Thanks to Travis D. Warwick, librarian of the Stephen Cole Kleene Mathematics Library at University of Wisconsin - Madison, for making Fulkerson's Thesis available to me in 2009. It was then brought to light and now can be accessed \url{www.researchgate.net/publication/36218809}).}. The QHF is similar to the LRREF of a matrix, with the exception of all zero rows
being grouped together at the top or the bottom of the matrix (see section \ref{sec:QuasiHermiteForm}).

Fulkerson's proof on the existence of a row-equivalent QHF of a row-finite matrix $\A$, necessarily involves the axiom of countable choice (as implicitly stated in~\cite[Theorem 3.1]{Fu:Th}). It enabled him to overcome the lack of a certain rule for choosing a Hamel basis from the length-equivalent classes, say $({\mathcal{K}}_{\length_i})_{i\in\naturals}$, of the row space of $\A$. This basis consists of representatives $\xi_i$s chosen from each ${\mathcal{K}}_{\length_i}$. The $\xi_i$s were used by Fulkerson to formulate a recurrence, yielding the set of non-zero rows of $\QHF$. This set is a Hermite basis of the row space of $\A$. Due to the non-constructive nature of the axiom of countable choice, its emergence prior to the recurrence reduces the constructibility of the latter to cases in which a basis of $\xi_i$s is given in advance by $\A$. As shown in this paper, the Gaussian elimination part of the infinite Gauss-Jordan algorithm induces a rule for choosing a Hamel basis from $({\mathcal{K}}_{\length_i})_{i\in\naturals}$. The algorithm in its full extent establishes the constructiveness of the QHF of row-finite matrices.

In this paper, we deal with infinite linear systems
\begin{equation} \label{MQLR} \A\cdot y=g, \end{equation}
whose coefficient matrix $\A$ is $\naturals\times\naturals$ row-finite (the number of nonzero entries in each row is finite).

The infinite Gauss-Jordan elimination algorithm follows the basic processes of the standard Gauss-Jordan algorithm with the following innovations:\vspace{-0.05in}
\begin{itemize}
\item It is repeatedly applied to an infinite sequence of finite matrices associated with $\A$.\vspace{-0.08in}
\item It is implemented under a rightmost pivot elimination strategy.
\end{itemize}
This infinite elimination process constructs a row-finite matrix $\QHF$, in QHF (see section \ref{sec:TheInfiniteGauss-JordanAlgorithmAndTheQuasi-HermiteForm}) by means of an infinite sequence of row-elementary operations reducing $\A$ to $\QHF$. The composite of this sequence is represented by a nonsingular row-finite matrix $\Q$, which ensures that $\A$ and $\QHF$ are left associates. Subsequently, left association guarantees that $\A$ and $\QHF$ have identical homogeneous solution spaces.

The general solution sequence of Eq. (\ref{MQLR}) is expressed in terms of entries of $\QHF$ and $\Q$ (see section \ref{sec:ConstructionOfRow-FiniteSystemSolutions}). 
If $\A$ possesses infinite deficiency, the infinite Gauss-Jordan algorithm generates a Schauder basis of the homogeneous solution space of Eq. (\ref{MQLR}), considered as subspace of the Fr\'{e}chet Space of complex sequences. Otherwise, the algorithm generates a finite basis (Hamel basis). Both types of bases are described in terms of opposite-sign columns of $\QHF$ not containing leading 1s and they generalize the notion of the \textit{fundamental solution set} (see section \ref{sec:AFundamentalSetOfHomogeneousSolutions}), primarily associated with linear difference equations (see ~\cite{El:Int}).

If the coefficient matrix $\A$ of Eq. (\ref{MQLR}) possesses a non-zero uppermost diagonal, then Eq. (\ref{MQLR}) represents a \textit{linear difference equation with variable coefficients} (LDEVC). Furthermore, if all entries of the uppermost diagonal are non-zero, $\A$ represents an equation of \emph{regular order} (RO-LDEVC). Otherwise, $\A$ represents an equation of \emph{irregular order}. Equations of regular order comprise linear recurrences with variable coefficients of ascending order (see subsection \ref{sec:TheNonhogeneousSolution}) and equations of $N$-order (see subsection \ref{sec:NOrderLinearDifferenceEquationWithVariableCoefficients}).

As the coefficient matrix $\A$ associated with a RO-LDEVC is in lower echelon form the infinite Gaussian elimination is sufficient for constructing the QHF, $\QHF$, of $\A$. Moreover as $\A$ is of finite (zero) left-nullity, $\QHF$ is the unique Hermite form (or LRREF) of $\A$. It is shown hereby, that the opposite-sign first $N$ columns of $\QHF$, constructed simultaneously by the infinite Gaussian elimination, yield $N$ linearly independent homogeneous solutions, thus forming a fundamental solution set.

The lack of a general method for solving linear difference equations of order greater than one with non constant  coefficients, has been recorded in mathematical literature  in~\cite{Goldberg} and more recently in~\cite{GAg,Jensen}. 

Also recorded is an alternative approach suggesting priority should be given to a generalization of the closed form solution of the first order linear recurrence with variable coefficients following the \textit{``guess-and-prove by induction"} approach (see ~\cite{El:Int}).

The infinite Gaussian elimination method bridges these two research lines. 
Implemented with symbolic computation (see section \ref{sec:ExplicitSolutionsForLinearDifferenceEquationsOfRegularOrder}), it results in fundamental solution sequences whose terms are determinant expansions of lower Hessenberg matrices (Hessenbergians). Subsequently, the solution formulas, expressed in terms of Hessenbergians, are generalized, through mathematical induction, to cover all RO-LDEVCs.
The general solution can also be expressed  as a single Hessenbergian. This turns out to be a natural generalization of both the standard solution of the first order and the solution of the $N$th order LDEVC, as established by Kittappa in ~\cite{Kit:Rep}.

Two examples of LDEVCs of irregular order are presented in section \ref{sec:LinearDifferenceEquationsOfIrregularOrder}\hspace{0.02in} in order to illustrate the infinite Gauss-Jordan elimination algorithm in its full implementation.

The present work opens new perspectives for the solution of LDECVs and their applications to time varying models. The Leibniz determinant formula applied on  Hessenbergians can be expressed as a sum of signed elementary products indexed by integers, instead of permutations (see~\cite{Pa:ClFS}). This, answers a long-standing open question concerning a closed form expression of the general solution for RO-LDEVCs.
A unified theory for time series models with variable coefficients is presented\footnote{This paper has been presented in several academic seminars (see for example presentation titled ``A unified theory for time varying models: foundations and applications in the presence of breaks and heteroskedasticity", October 2013, Birkbeck College,  University of London, on \url{http://www.ems.bbk.ac.uk/research/Seminar_info/unifiedtheory}).} in~\cite{Pa:UTH} and its effects on modelling returns during financial crises in~\cite{Ka:MRV}.


\section{The Infinite Gauss-Jordan Algorithm and the Quasi-Hermite Form of Row-finite Matrices}
\label{sec:TheInfiniteGauss-JordanAlgorithmAndTheQuasi-HermiteForm}
Throughout the paper $\omega$ denotes the least infinite ordinal number, which is identified with the standard set of natural numbers $\naturals$ including zero, $\complex$ stands for the algebraic field of complex numbers and $\complex^{(\omega)}$ stands for the linear space of infinite numerical sequences with finite number of non-zero terms in $\complex$. The set $({\complex^{(\omega)}})^\omega$ consists of the sequences in $\complex^{(\omega)}$. The set  $({\complex^{(\omega)}})^\omega$ equipped with the matrix addition and multiplication by scalars turns into the linear space of row-finite $\omega\times\omega$ matrices over $\complex$. If $({\complex^{(\omega)}})^\omega$ is further equipped with the matrix multiplication, linear and ring structures turn $({\complex^{(\omega)}})^\omega$ into an associative and non commutative algebra with the identity $\omega\times\omega$ matrix $\I$ as unit element. This algebra will be denoted by $\RFM$.
 
A matrix $\A\in\RFM$ will be also written as $\A=(a_{ij})_{(i,j)\in\omega\times\omega}$. If $\A$ is considered as a right operator, it induces the endomorphism: $\complex^{(\omega)}\ni x\mapsto x\cdot \A\in \complex^{(\omega)}$. If $\A$ is considered as a left operator, it induces the endomorphism: $\Com \ni x\mapsto  \A\cdot x\in \Com$. The coefficient matrix $\A\in \RFM$ in Eq. (\ref{MQLR}) is considered as a left operator on $\Com$. 
The row of index $i$ of $\A$ will be denoted by $\A_i\in \complex^{(\omega)}$.

Following Toeplitz~\cite{To:Auf} the column index of the rightmost non-zero element of the row $\A_i$ is called the \emph{length} of $\A_i$ denoted by $\length(\A_{i})$ or simply by $\length_i$. Notice that $\length(1,0,0,...)=0$. We adhere to the conventions: $\textbf{0}=(0,0,0,...)$ and $\length(\textbf{0})=-1$.  The \textit{canonical basis} of $\complex^{(\omega)}$ consisting of the sequences $\e_0=(1,0,0,...), \e_1=(0,1,0,...),...$, is denoted as $(\e_n)_{n\in\omega}$. If $\A_i\not= \textbf{0}$ and $\A_j\not=\textbf{0}$ such that $\length(\A_{i})<\length(\A_{j})$, whenever $i<j$, then $\A$ is said to be in \emph{lower row echelon form}.  The \emph{left-null space} of $\A$ is denoted by $\LNS(\A)=\{x\in \complex^{(\omega)}: x\cdot \A=\textbf{0}\}$. The \textit{row space} of $\A$ is the span of the rows of $\A$: $\RS(\A)=\Span(\A_i)_{i\in \omega}$. The \textit{left-nullity} and the \textit{rank} of $\A$ are the dimensions of $\LNS(\A)$ and $\RS(\A)$, respectively: $\nul(\A)=\dim \ \LNS(\A)$ and $\rank(\A)=\dim\ \RS(\A)$. 
\begin{definition} \label{def left association} $\A,\B\in \RFM$ 
are said to be: 
\begin{enumerate}
\item \emph{Left associates} if there exists a nonsingular matrix $\Q\in \RFM$ such that
\begin{equation} \label{left association}
                                                 \Q\cdot \A=\B. 
\end{equation}
\item \emph{Row equivalent} if $\RS(\A)=\RS(\B)$ and $\nul(\A)=\nul(\B)$. 
\end{enumerate}
\end{definition}
If $\A,\B$ are finite $n\times m$ matrices the condition $\RS(\A)=\RS(\B)$ is equivalent to the definition of row equivalence, while $\nul(\A)=\nul(\B)$ necessarily follows. However, if $\A,\B$ are row-finite $\omega\times\omega$ matrices, the condition $\nul(\A)=\nul(\B)$ is essential in the definition of row equivalence. For example consider the row-finite $\omega\times\omega$ matrices:
\[\A_1=\left(\begin{array}{ccccc}
1 & 0 & 0 &  0      & ...  \\
0 &  1  & 0 & 0     & ...  \\
0 &  0  & 1   & 0   & ... \vspace{-0.05in}\\
\vdots & \vdots   & \vdots   & \vdots      & \vdots\vdots\vdots  
\end{array} \right), \hspace{0.5in}
\A_2=\left(\begin{array}{ccccc}
0 & 0 & 0 &  0      & ...  \\
1 &  0  & 0 & 0     & ...  \\
0 &  1  & 0   & 0   & ...\\
0 &  0  & 1   & 0   & ... \vspace{-0.05in}\\
\vdots & \vdots   & \vdots   & \vdots      & \vdots\vdots\vdots  
\end{array} \right)
.\]
Evidently $\RS(\A_1)=\RS(\A_2)$  but $\nul(\A_1)\not=\nul(\A_2)$.
In view of definition \ref{def left association}, the following theorem generalizes a well known result for finite matrices.
\begin{theorem} \label{theorem of row equivalence}
$\A,\B\in \RFM$ are left associates if and only if $\A,\B$ are row-equivalent \emph{(see ~\cite{Fu:Th} for proof)}.
\end{theorem}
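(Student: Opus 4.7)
The plan is to treat the two directions separately: the forward implication is a direct computation, while the reverse implication requires reducing both matrices to a common canonical form.

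For necessity, I assume $\Q\cdot\A=\B$ with $\Q\in\RFM$ nonsingular, so that $\Q^{-1}\in\RFM$ exists and $\Q^{-1}\cdot\B=\A$. Row-finiteness of $\Q$ means each row $\B_i=\sum_j q_{ij}\A_j$ is a finite combination of rows of $\A$, giving $\RS(\B)\subseteq\RS(\A)$; applying the same observation to $\Q^{-1}\cdot\B=\A$ yields the reverse inclusion, hence $\RS(\A)=\RS(\B)$. For the nullities, since $(y\cdot\Q)\cdot\A=y\cdot(\Q\cdot\A)=y\cdot\B$, the assignment $y\mapsto y\cdot\Q$ is a well-defined linear map $\LNS(\B)\to\LNS(\A)$ with inverse $x\mapsto x\cdot\Q^{-1}$, so $\nul(\A)=\nul(\B)$.

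For sufficiency, I assume $\RS(\A)=\RS(\B)$ and $\nul(\A)=\nul(\B)$. My plan is: (i) apply the infinite Gauss-Jordan algorithm developed in the subsequent sections of the paper (equivalently Fulkerson's construction in~\cite{Fu:Th}) to produce quasi-Hermite forms $\QHF_\A,\QHF_\B$ together with nonsingular $\Q_1,\Q_2\in\RFM$ satisfying $\Q_1\cdot\A=\QHF_\A$ and $\Q_2\cdot\B=\QHF_\B$; (ii) invoke the already-proved necessity direction to conclude that both $\A$ and $\B$ are row-equivalent to their QHFs, whence by transitivity $\QHF_\A$ and $\QHF_\B$ are themselves row-equivalent; (iii) invoke the almost-uniqueness clause of~\cite[Theorem~3.1]{Fu:Th} to deduce that $\QHF_\A=\QHF_\B$ (up to the irrelevant ordering of zero rows); (iv) set $\Q=\Q_2^{-1}\Q_1$, which is nonsingular and row-finite as a product of such matrices, and verify $\Q\cdot\A=\Q_2^{-1}\QHF_\A=\Q_2^{-1}\QHF_\B=\B$.

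The main obstacle is step~(iii): the almost-uniqueness of the quasi-Hermite form in the row-finite setting. One must show that the set of pivot columns, the pivot-free entries of the nonzero rows, and the number of zero rows of a QHF representative are determined by $\RS$ and $\nul$ of the original matrix alone. The hypothesis $\nul(\A)=\nul(\B)$ enters here in an essential way, as the $\A_1,\A_2$ example in the excerpt shows: without it, matrices with identical row spaces can fail to be left associates simply because their QHFs have different numbers of null rows. A more direct alternative --- constructing $\Q$ row by row from finite expansions $\B_i=\sum_j q_{ij}\A_j$ and correcting by elements of $\LNS(\A)$ to force nonsingularity --- runs into essentially the same bookkeeping and, at the critical moment, still relies on the equal-nullity input.
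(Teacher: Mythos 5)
The paper does not actually prove this theorem; it defers entirely to Fulkerson's thesis, so there is no in-paper argument to compare against. Judged on its own merits, your proof is essentially sound, and it takes the ``reduce both matrices to a common canonical form'' route rather than Fulkerson's more direct construction of $\Q$ (the alternative you sketch in your last sentence). Your necessity direction is correct, granted one point you should make explicit: in $\RFM$ ``nonsingular'' must mean two-sided invertible \emph{within} $\RFM$, which is exactly what Fulkerson's criterion (the rows of $\Q$ form a Hamel basis of $\complex^{(\omega)}$, quoted as proposition~\ref{Bourbaki21}(\textit{i})) delivers; with that, both the row-space equality and the isomorphism $y\mapsto y\cdot\Q$ between $\LNS(\B)$ and $\LNS(\A)$ go through. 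Your ordering also avoids circularity, since the paper's construction of $\QHF$ (theorem~\ref{A is row-equivalent to H}, via proposition~\ref{A is row-equivalent to G}) only ever uses the necessity direction, which you prove first.

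Two details in the sufficiency direction need to be filled in rather than waved at. First, quasi-uniqueness gives $\QHF_\A=P\cdot\QHF_\B$ only for some permutation matrix $P$ (the interleaving of zero rows among non-zero rows may differ, not merely ``the ordering of zero rows''), so step~(iv) should read $\Q=\Q_2^{-1}\cdot P^{-1}\cdot\Q_1$; this is harmless because an $\omega\times\omega$ permutation matrix and its transpose both lie in $\RFM$. Second, the claim that the two QHFs match requires two separate inputs: the non-zero rows coincide as a set because they form the Hermite basis of the common row space $\RS(\A)=\RS(\B)$ (corollary~\ref{Hermite basis}), while the zero rows have the same cardinality because for a matrix in QHF the non-zero rows are linearly independent, so its left-nullity equals the number of its zero rows, and by your necessity direction this equals $\nul(\A)=\nul(\B)$. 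You correctly identify this second point as where the equal-nullity hypothesis is indispensable, but the argument that ``number of zero rows of a QHF of $\A$ equals $\nul(\A)$'' is the one lemma your write-up asserts without proof, and it is exactly the content needed to make step~(iii) rigorous.
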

By virtue of theorem \ref{theorem of row equivalence}, the notions of row equivalence and left-association will be equivalently used throughout the paper.
\subsection{Quasi-Hermite Form of Row-Finite Matrices}
\label{sec:QuasiHermiteForm}
The following definition and the main results of this subsection have been established by Fulkerson in~ ~\cite{Fu:Th}.
\begin{definition} \label{definition-Hermite basis} 
A matrix $\QHF=(h_{ij})$ in $\RFM$ is in \emph{quasi-Hermite Form} (QHF) if the following properties hold:
\begin{description}
\item[\textit{i})] The sequence of lengths $(\length(\QHF_j))_{j\in J}$ of the non-zero rows $(\QHF_j)_{j\in J}$ of $\QHF$ is strictly increasing ($\QHF$ is in lower row-echelon form). 
\item[\textit{ii})]  $h_{j\length_{j}}=1$ for all $j\in J$ (The rightmost coefficients of non-zero rows are 1s).
\item[\textit{iii})]  If $m\in \omega$ and $j\in J$ such that $j\not=m$, then $h_{m \length_{j}}=0$ ($\QHF$ is in row-reduced form: the entries above and below rightmost 1s ($h_{j\length_j}=1$)  are all zero).
\end{description}
\end{definition}
Notice that the postulate (\textit{iii}) in the above definition differs form Fulkerson's third postulate in the definition of the QHF introduced along with the notion of Hermite basis.  This states that $h_{m \length_j}=0$ whenever $m>j$, that is all entries ($h_{m \length_j}$) below rightmost 1s ($h_{j\length_j}$) are zero. However, by virtue of statement (\textit{i}) of definition \ref{definition-Hermite basis}, all zero entries above rightmost 1s are also zero and thus the two definitions coincide. 

Under the assumption of the countable axiom of choice, every matrix $\A\in\RFM$ has a QHF. 

The indexing set of the zero rows of $\QHF$, will be denoted by $W$ and the indexing set of non-zero rows of $\QHF$ by $J=\omega\setminus W$. Dealing with row-finite matrices of infinite rank, the indexing set $J$ is an infinite subset of $\omega$ and we shall write for it $J=\{j_0,j_1,j_2,...\}$ assuming that $j_0<j_1<j_2<...$. The sequence of non-zero rows of $\QHF$ will be written as $(\QHF_{j_i})_{i\in\omega}$, whereas $j_i\in J$. The strictly increasing sequence of nonnegative row lengths of $\QHF$ is denoted by $(\mu_i)_{i\in\omega}$ with $\mu_i=\length(\QHF_{j_i})$ for $i\in \omega$. A typical row-finite matrix in QHF is exhibited below:
\begin{equation} \label{QHF}
\QHF\!\!=\!\!\!\left(\!\!\!\begin{array}{ccccccccccccccc} 
h_{j_00} &\!\!\!\! ... \!\!\!\!\! & h_{j_0\mu_0-1}  &\!\!\!\!  1 \!\!\!\! &     0            &\!\!\!\!  ... \!\!\!\! &           0     &\!\! 0 \!\!&     0           &\!\!\!\!  ... \!\!\!\! & 0 &\!\!\!\! 0 &\!\!\!\! 0 &\!\!\!\!  ...   \\ 
0      & \!\!\!\! ... \!\!\!\!\! &            0     &\!\!\!\!  0 \!\!\!\! &     0            &\!\!\!\!  ... \!\!\!\!&            0     &\!\! 0 \!\!&     0           &\!\!\!\!  ... \!\!\!\! &0 &\!\!\!\! 0 &\!\!\!\! 0 &\!\!\!\!  ...   \vspace{-0.03in}\\
h_{j_10} &\!\!\!\!  ... \!\!\!\!\!  &  h_{j_1\mu_0-1} &\!\!\!\!  0 \!\!\!\! & h_{j_1\mu_0+1} &\!\!\!\! ... \!\!\!\! & h_{j_1 \mu_1-1}&\!\! 1 \!\!&     0           & \!\!\!\!  ... \!\!\!\! &0 &\!\!\!\! 0 &\!\!\!\! 0 &\!\!\!\!  ...  \\
 0      &\!\!\!\!  ... \!\!\!\!\!  & 0                 &\!\!\!\!  0 \!\!\!\! &   0              &\!\!\!\! ... \!\!\!\!&            0     &\!\! 0 \!\!&     0           & \!\!\!\!  ... \!\!\!\! &0&\!\!\!\! 0 &\!\!\!\! 0 &\!\!\!\!  ... \vspace{-0.03in} \\
h_{j_20} & \!\!\!\! ... \!\!\!\!\!  & h_{j_2\mu_0-1}  &\!\!\!\!  0 \!\!\!\! & h_{j_2\mu_0+1} &\!\!\!\! ... \!\!\!\!&        h_{j_2\mu_1-1}    &\!\! 0 \!\!& h_{j_2\mu_1+1}& \!\!\!\!  ... \!\!\!\! &h_{j_2\mu_2-1} &\!\!\!\! 1 &\!\!\!\! 0 &\!\!\!\!  ... \vspace{-0.01in}\\
0      &\!\!\!\!  ... \!\!\!\!\!  & 0                 &\!\!\!\!  0 \!\!\!\! &   0              &\!\!\!\! ... \!\!\!\!&            0     &\!\! 0 \!\!&     0           & \!\!\!\!  ... \!\!\!\! &0&\!\!\!\! 0 &\!\!\!\! 0 &\!\!\!\!  ... \vspace{-0.1in}\\
.      &\!\!\!\!  ... \!\!\!\!\! &       .           &\!\!\!\!  . \!\!\!\! &   .              &\!\!\!\!  ... \!\!\!\!&            .     &\!\! . \!\!&     .           & \!\!\!\!  ... \!\!\!\! &. &\!\!\!\! . &\!\!\!\! . &\!\!\!\!  ...  \vspace{-0.1in}\\
.      &\!\!\!\!  ... \!\!\!\!\! &       .           &\!\!\!\!  . \!\!\!\! &   .              &\!\!\!\!  ... \!\!\!\!&            .     &\!\! . \!\!&     .           & \!\!\!\!  ... \!\!\!\! &. &\!\!\!\! . &\!\!\!\! . &\!\!\!\!  ... \vspace{-0.1in}\\
.      &\!\!\!\!  ... \!\!\!\!\! &       .           &\!\!\!\!  . \!\!\!\! &   .              &\!\!\!\!  ... \!\!\!\!&            .     &\!\! . \!\!&     .           & \!\!\!\!  ... \!\!\!\! &. &\!\!\!\! . &\!\!\!\! . &\!\!\!\!  ... \end{array}\!\!\! \right). \end{equation}
The ``quasi-uniqueness" of a QHF of a row-finite matrix $\A$ is the result of the statement:\emph{``All \emph{QHFs} of $\A$ differ by row permutations"}.

If $\nul(\A)$ and $\rank(\A)$ are infinite and all the zero rows are positioned at the top (or the bottom part) of $\QHF$, then the row indexing set of $\QHF$ must be the ordinal $\omega+\omega=\omega2$. However, the inequality $\omega2>\omega$ leads to matrices not belonging in $\RFM$, a fact that undermines left association. 
In this case, we are obliged to position a finite number of consecutive zero rows between non-zero rows, thus ensuring that the indexing set of the rows of $\QHF$ is $\omega$. 
Following Fulkerson, we trade in the full uniqueness of the reduced matrix in order to gain results in $\RFM$.

If, on the other hand, $\nul(\A)$ is finite, then the full uniqueness of $\QHF$ is guaranteed by positioning zero rows at the top part of $\QHF$. Now the row indexing set is still $\omega$ and $\QHF$ is the HF (or LRREF) of $\A$.
\subsection{The Infinite Gauss-Jordan Elimination Algorithm}
\label{sec:TheInfiniteGauss-JordanEliminationAlgorithm}
The infinite Gauss-Jordan elimination algorithm is based on a routine alternative to the standard elimination algorithm for finite matrices, which can  furthermore be extended to row-finite matrices. This routine is repeatedly applied to a sequence of successively augmented finite matrices by means of rows of the original matrix. If the original matrix is finite, the algorithm, equipped either with a leftmost or with a rightmost pivot elimination strategy, provides a row-equivalent upper or lower row reduced matrices respectively. This is however not the case for row-finite matrices (see example \ref{sec:Example1}). In order to ensure the row equivalence of the original row-finite matrix to the target matrix, the latter is replaced with a QHF and the routine uses the rightmost non-zero entries of the rows,  as pivot elements.

The elimination part of the algorithm is supplemented with row permutations applied to the non-zero rows of each reduced matrix so as to construct a finite matrix in QHF. Meanwhile the zero rows either encountered or created are not permuted, ensuring that the row indexing set of the resulting row-finite matrix is the ordinal $\omega$. As shown in the following paragraphs the algorithm effectively reduces a matrix in $\RFM$ to a QHF in $\RFM$ preserving left association.

In all that follows, the \emph{greatest length} row among the first $n+1$ rows of $\A$ is denoted by $g\length(\A_i)_{0\le i\le n}$.  The $(n+1)\times(m+1)$ \emph{top submatrix} of $\A$ is defined as 
\[ \A^{(n)}=(a_{i,j})_{0\le i\le n, 0\le j\le m}, \] whenever $m=g\length(\A_i)_{0\le i\le n}$. In the sequel the notation $g\length(\A^{(n)})$ and $g\length(\A_i)_{0\le i\le n}$ will be equivalently used.

\paragraph{Algorithm Description.} The elimination process starts with the top submatrix $\B^{(n_1)}\stackrel{\rm def}{=}\A^{(n_1)}$ of $\A$ containing at least two non-zero rows. The matrix $\B^{(n_1)}$ is reduced to a QHF denoted by ${\QHF}^{(n_1)}=(h^{(n_1)}_{ik})_{i,k}$. New consecutive rows of $\A$ are inserted below the last row of ${\QHF}^{(n_1)}$, until the first non-zero row, say $\A_{n_2}=(a_{n_2,k})_k$, is encountered. 

Let $\mu_2=\max\{\length(\A_{n_2}), g\length({\QHF}^{(n_1)})\}$. 
Define the $(n_2+1)\times(\mu_2+1)$ matrix  $\B^{(n_2)}=(\beta_{ik})_{i,k}$ as follows:
\[\beta_{ik}=\left\{\begin{array}{ll} h^{(n_1)}_{ik}  & {\rm if} \ 0\le i\le n_1\ {\rm and}\ 0\le k\le g\length({\QHF}^{(n_1)}) \\
  a_{n_2k} & {\rm if}\ i=n_2\ {\rm and}\ 0\le k\le \length(\A_{n_2})\\
  0 & {\rm otherwise}
\end{array}\right.  \]
The algorithm is then applied to $\B^{(n_2)}$ yielding a QHF of $\B^{(n_2)}$, say ${\QHF}^{(n_2)}=(h^{(n_2)}_{ik})_{i,k}$\vspace{0.05in}. This process continues ad infinitum, thus generating a sequence $({\QHF}^{(k)})_{k\ge 0}$ of finite matrices in QHF.
 
The submatrix of the first rows of ${\QHF}^{(k)}$ including the row ${\QHF}^{(k)}_n$, where $n\le k$ will be denoted by ${\QHF}^{(k)}\!\!\mid_n$ (hence ${\QHF}^{(n)}\!\!\mid_n={\QHF}^{(n)}$).
\paragraph{Algorithm Anatomy.}
\label{sec:AlgorithmAnatomy}
For theoretical purposes, the matrices ${\mathcal{B}}^{(n)}$, ${\mathcal{H}}^{(n)}$, ${\mathcal{H}}^{(k)}\!\!\mid_n$ are introduced by augmenting the matrices $\B^{(n)}, \QHF^{(n)}, {\QHF}^{(k)}\!\!\mid_n$ with an infinite number of zero columns further to their right, respectively. The row and the column dimension of the augmented matrices are $(n+1)$ and $\omega$ respectively. The goal of this setting is to provide matrices whose rows are elements of $\complex^{(\omega)}$, while preserving their row-length:
\[g\length({\mathcal{B}}^{(n)})=g\length(\B^{(n)}),\
g\length({\mathcal{H}}^{(n)})=g\length(\QHF^{(n)}),\
g\length({\mathcal{H}}^{(k)}\!\!\mid_n)=g\length(\QHF^{(k)}\!\!\mid_n).
\]
The algorithm is not affected by this modification. In particular, the row  $\A_{k}$ is inserted below ${\mathcal{H}}^{(k-1)}$ yielding the matrix ${\mathcal{B}}^{(k)}=({\mathcal{H}}^{(k-1)}:\A_{k})$ with row dimension $k+1$ and column dimension $\omega$. The algorithm is applied to the matrix ${\mathcal{B}}^{(k)}$ reducing it to ${\mathcal{H}}^{(k)}$. It generates a sequence of matrices $({\mathcal{H}}^{(k)})_{k\ge 0}$ in QHF, such that $\QHF^{(n)}$ is a submatrix of ${\mathcal{H}}^{(n)}$ and we shall denote it by:
\[\QHF^{(n)}\sqsubset {\mathcal{H}}^{(n)}.\] 
\begin{description}
\item[I)\hspace{0.15in} Gaussian elimination.] Let ${\mathcal{H}}^{(k-1)}_i$ be the $i$-row of the matrix ${\mathcal{H}}^{(k-1)}$. As ${\mathcal{H}}^{(k-1)}$ is in QHF, the lengths of the non-zero rows of ${\mathcal{H}}^{(k-1)}$ form a strictly increasing sequence (lower echelon form).
Let $\A_k$ be the new inserted row. Let also $\A_k\not=\textbf{0}$. The algorithm uses the rightmost non-zero elements of the rows of ${\mathcal{H}}^{(k-1)}$ as pivots to clear entries of $\A_k$. The Gaussian elimination will change $\A_k$ if and only if there is a non-zero row of ${\mathcal{H}}^{(k-1)}$, say ${\mathcal{H}}^{(k-1)}_{m}$, $0\le m\le k-1$, such that $\length({\mathcal{H}}^{(k-1)}_{m})\le\length(\A_k)$ and $a_{k,j}\not=0$ with $j=\length({\mathcal{H}}^{(k-1)}_{m})$. The $\length(\A_k)$ will change if and only if there is a non-zero row, say ${\mathcal{H}}^{(k-1)}_{m}$, $0\le m\le k-1$, such that $\length({\mathcal{H}}^{(k-1)}_{m})=\length(\A_k)$\vspace{0.02in}.
Upon completion of the Gaussian elimination and normalization of the rightmost coefficients to $1$, the resulting $k$-row is denoted by $\G_k$. The row $\G_k$ is a linear combination of the inserted non-zero rows of $\A$ that is 
		\begin{equation}\label{G is LC of A}
     \G_k=\sum_{i=0}^nc_{ki}\A_i \ \ {\rm  with} \ \ c_{kk}\not=0.	
		\end{equation}
In case $\A_k=\textbf{0}$, then $\G_k=\textbf{0}$, which is also covered by Eq. (\ref{G is LC of A}), by taking $c_{kk}=1$ and $c_{ki}=0$ for $0\le i\le k-1$. Consequently, if $\G_k\not=\textbf{0}$, then $\length(\G_k)\not=\length({\mathcal{H}}^{(k-1)}_i)$ for all $i: 0\le i \le k-1$. In other words $\length(\G_k)$ differs from the lengths of all rows of  ${\mathcal{H}}^{(k-1)}$. Accordingly, for every $k\in\naturals$ such that $\G_k\not=\textbf{0}$, we have:
\begin{equation}\label{length of Gk}
 \length(\G_k)\not=g\length({\mathcal{H}}^{(k-1)}).
\end{equation}
It turns out that there are the following possible cases:
\begin{description}
\item[Case \textit{i})]  $\length(\G_k)>g\length({\mathcal{H}}^{(k-1)})$ or $\G_k=\textbf{0}$. In this case $({\mathcal{H}}^{(k-1)}:\G_k)$ is in QHF and therefore  $({\mathcal{H}}^{(k-1)}:\G_k)={\mathcal{H}}^{(k)}$. The algorithm continues by inserting the new row $\A_{k+1}$. 
\item[Case \textit{ii})]  $\length(\G_k)<g\length({\mathcal{H}}^{(k-1)})$ and $\G_k\not=\textbf{0}$. In this case the algorithm goes to the next elimination step \textbf{(II)}. 
\end{description}
\item[II)\hspace{0.1in} Jordan elimination.] The row $\G_k$ is used as a pivot row to eliminate the non-zero entries $(i,\length(\G_k))$, $0\le i\le k-1$, of the rows in ${\mathcal{H}}^{(k-1)}$, namely the entries above the leading 1. Then the algorithm goes to \textbf{(III)}.
\item[III)\hspace{0.025in} Row permutations.] Upon completion of the Jordan elimination row permutations take place. If $j_{i-1},j_i\in J$ ( $0\le j_{i-1}< j_i<k$) and $\length({\mathcal{H}}^{(k-1)}_{j_{i-1}})<\length(\G_k)<\length({\mathcal{H}}^{(k-1)}_{j_i})$ or $\length(\G_k)<\length({\mathcal{H}}^{(k-1)}_{j_0})$ then $\G_k$ takes the position of  ${\mathcal{H}}^{(k-1)}_{j_i}$ or ${\mathcal{H}}^{(k-1)}_{j_0}$ respectively. Now $\G_k$ is designated by ${\mathcal{H}}^{(k)}_{j_i}$. In the meantime all the non-zero rows below and including the row ${\mathcal{H}}^{(k-1)}_{j_i}$ move downwards, preserving the lower echelon form of the new matrix ${\mathcal{H}}^{(k)}$, while leaving unchanged the position of zero rows. 
Thus, if $0\le n\le k-1$, then $\length({\mathcal{H}}^{(k)}_n)\le \length({\mathcal{H}}^{(k-1)}_n)$, whence $g\length({\mathcal{H}}^{(k)}|_n)\le g\length({\mathcal{H}}^{(k-1)}|_n)$.

More generally,
\begin{equation}\label{inequality1} 
g\length({\mathcal{H}}^{(k)}|_n)\le g\length({\mathcal{H}}^{(N)}|_n)
\end{equation} 
for $n\le N < k$. Row permutations on ${\mathcal{H}}^{(N)}|_n$ occur, if and only if
\begin{equation}\label{inequality2} 
g\length({\mathcal{H}}^{(k)}|_n)<g\length({\mathcal{H}}^{(N)}|_n),
\end{equation}
for some $k>N$.

Upon completion of row-permutations, the matrix ${\mathcal{H}}^{(k)}$ is in QHF and the process continues by inserting the new row $\A_{k+1}$ below it. 
\end{description}
\begin{remark}\label{remark1} 
{\rm A summary of results and direct extensions derived from the above algorithm, are presented below:
\begin{description}
\item[I)] The Jordan elimination does not affect the row-lengths of ${\mathcal{H}}^{(k-1)}$. New row-lengths and new zero rows are exclusively generated by the Gaussian elimination.
\item[II)] Eq. (\ref{length of Gk})  can be generalized as follows:
 
Let $N\in\naturals$. If $\G_{k}\not=\textbf{0}$ for some $k>N$, then \[\length(\G_{k})\not=g\length({\mathcal{H}}^{(N)}|_n),\] 
for all $n$ such that $n\le N$.
\item[III)] Row permutations on ${\mathcal{H}}^{(k-1)}|_n$, $k>n$, occur if and only if $\length(\G_k)<g\length({\mathcal{H}}^{(k-1)}|_{n})$ and $\G_k\not=\textbf{0}$, or equivalently, if and only if $g\length({\mathcal{H}}^{(k-1)}|_n)>g\length({\mathcal{H}}^{(k)}|_n)$. These are also necessary conditions for the implementation of the Jordan elimination; for if $\G_k$ eliminates an element of the row 
${\mathcal{H}}^{(k-1)}_n$ then, as $\length(\G_{k})<\length({\mathcal{H}}^{(k-1)}_n)$, row permutations necessarily follow.
Therefore, if $k>n$, the following statements are equivalent:
\begin{description}
\item[\textit{i})]   
${\mathcal{H}}^{(k-1)}|_n \not={\mathcal{H}}^{(k)}|_n$ (changes on ${\mathcal{H}}^{(k-1)}|_{n}$ occur).
\item[\textit{ii})] $\G_{k}\not=\textbf{0}$ and $\length(\G_{k})<g\length({\mathcal{H}}^{(k-1)}|_n)$.
\item[\textit{iii})] $g\length({\mathcal{H}}^{(k-1)}|_n)>g\length({\mathcal{H}}^{(k)}|_n)$.
    \end{description}
   As a generalization:
\end{description}
\begin{description}
\item[IV)] Changes on the matrix ${\mathcal{H}}^{(N)}|_n$, $n\le N$, will take place (by Jordan elimination and/or row permutations) if and only if there exists some $k$ with $k>N$ such that one of the following equivalent statements is confirmed:
\begin{description}
\item[\textit{i})] 
${\mathcal{H}}^{(N)}|_n\not={\mathcal{H}}^{(k)}|_n$.
\item[\textit{ii})]  $\G_{k}\not=\textbf{0}$ and   $\length(\G_{k})<g\length({\mathcal{H}}^{(N)}|_n)$.
\item[\textit{iii})]  $g\length({\mathcal{H}}^{(N)}|_n)>g\length({\mathcal{H}}^{(k)}|_n)$.
\end{description}
\end{description}
\begin{description}
\item[V)] Changes on the matrix ${\mathcal{H}}^{(N)}|_n$, $n\le N$, will not take place if and only if for every $k$ with $k>N$  one of the following equivalent statements is confirmed:
\begin{description}
\item[\textit{i})]  ${\mathcal{H}}^{(N)}|_n={\mathcal{H}}^{(k)}|_n$.
\item[\textit{ii})]  Either $\length(\G_{k})>g\length({\mathcal{H}}^{(N)}|_n)$  or $\G_{k}=\textbf{0}$.
\item[\textit{iii})]  $g\length({\mathcal{H}}^{(N)}|_n)=g\length({\mathcal{H}}^{(k)}|_n)$.
\end{description}
\end{description}}
\end{remark}
\subsection{The Main Theorem and the Chain of Matrices in QHF}
\label{sec:TheMainTheorem}
The fundamental theorem of this paper is presented below. It establishes that given an arbitrary number, say $n$, of rows constructed by the algorithm, there is a large enough number $N\ge n$ such that for all $k\ge N$ no further changes on these $n$ rows can take place by the process. 
\begin{theorem} \label{main}
For every $n\in \omega$ there exists $N=\delta_n\ge n$ such that 
\begin{equation}\label{main eq} {\mathcal{H}}^{(N)}\!\!\mid_n={\mathcal{H}}^{(k)}\!\!\mid_n\end{equation} 
for all $k\ge N$. 
\end{theorem}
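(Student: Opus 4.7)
The plan is to reduce the statement to the monotone behavior of the greatest row length of the top submatrix, which is already encoded in inequality~(\ref{inequality1}) of the algorithm description, and then to invoke the equivalences collected in Item~V) of Remark~\ref{remark1}. More precisely, I would fix $n\in\omega$ and study the sequence of integers
\[
s_k \;=\; g\length({\mathcal{H}}^{(k)}\!\!\mid_n), \qquad k\ge n.
\]
By inequality~(\ref{inequality1}) we have $s_{k+1}\le s_k$ for every $k\ge n$, so $(s_k)_{k\ge n}$ is non-increasing. Because each $s_k$ is an integer with $s_k\ge -1$ (the convention $\length(\textbf{0})=-1$ gives the lower bound even if ${\mathcal{H}}^{(k)}\!\!\mid_n$ consists entirely of zero rows), the sequence must eventually become constant: there exists $N=\delta_n\ge n$ such that $s_k=s_N$ for every $k\ge N$.

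Once stabilization of the greatest length is established, I would close the argument by quoting Item~V)(iii) of Remark~\ref{remark1}, which asserts that the condition
\[
g\length({\mathcal{H}}^{(N)}\!\!\mid_n)=g\length({\mathcal{H}}^{(k)}\!\!\mid_n) \qquad \text{for every } k>N
\]
is equivalent to ${\mathcal{H}}^{(N)}\!\!\mid_n={\mathcal{H}}^{(k)}\!\!\mid_n$. Since this equality of greatest lengths is exactly what has just been obtained, the desired stabilization~(\ref{main eq}) follows immediately, with $\delta_n=N$.

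The main conceptual point has already been absorbed into Remark~\ref{remark1}: the characterization that the only way ${\mathcal{H}}^{(k)}\!\!\mid_n$ can be altered by the subsequent steps is by strictly decreasing $g\length({\mathcal{H}}^{(k)}\!\!\mid_n)$, through a new row $\G_k\neq\textbf{0}$ of length smaller than the current greatest length. Given this, the only remaining obstacle is showing that this strict decrease cannot continue indefinitely, which is the elementary observation that a non-increasing sequence of non-negative integers (extended by the value $-1$) stabilizes. So the proof is essentially a well-foundedness argument grafted onto the structural characterization supplied by Remark~\ref{remark1}; no further computation on the rows $\G_k$ or on the Jordan step is needed.
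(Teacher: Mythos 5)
Your proposal is correct and follows essentially the same route as the paper: both arguments rest on the fact that $g\length({\mathcal{H}}^{(k)}\!\!\mid_n)$ is a non-increasing integer sequence bounded below, hence eventually constant, and then invoke Remark~\ref{remark1}~(\textbf{V}) [$(iii)\Rightarrow(i)$] to convert stabilization of the greatest length into stabilization of the submatrix itself. The only difference is presentational — the paper packages the strict decreases into a set ${\mathcal{M}}_n$ and splits into the cases ${\mathcal{M}}_n=\emptyset$ and ${\mathcal{M}}_n\neq\emptyset$, whereas you argue directly on the monotone sequence, which is a mild streamlining rather than a different method.
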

\begin{proof} For every $n\in\omega$ define the set ${\mathcal{M}}_n\subset \omega$ such that $m\in {\mathcal{M}}_n$ if $m>n$, $\G_{m}\not=\textbf{0}$ and $\length(\G_{m})<g\length({\mathcal{H}}^{(m-1)}|_n)$. For any $n\in\omega$ there are two possible cases: 
\begin{description}
\item[\textit{Case i})]  ${\mathcal{M}}_n=\emptyset$. 
This hypothesis along with remark
\ref{remark1} (\textbf{II}) entails: for every $k>n$ either $\length(\G_{k})>g\length({\mathcal{H}}^{(k-1)}|_n)$ or $\G_{k}=\textbf{0}$. Choosing $N=n$, Eq. (\ref{main eq}) follows from remark \ref{remark1} (\textbf{V}) [$(ii) \Rightarrow (i)$]. 
\item [\textit{Case ii})]  ${\mathcal{M}}_n\not=\emptyset$. Call ${\mathcal{M}}_n=\{m_1,m_2,...\}$ with $m_1<m_2<...$. 
Notice that ${\mathcal{M}}_n$ could be either finite or infinite. Also call $n=m_0$. Therefore $m_0<m_1<...$.
As $m_i-1\ge m_{i-1}$, it follows (from inequality (\ref{inequality1})) that  $g\length({\mathcal{H}}^{(m_i-1)}|_n)\le g\length({\mathcal{H}}^{(m_{i-1})}|_n)$. As $m_i\in {\mathcal{M}}_n$ for $i\ge 1$, we infer that $\length(\G_{m_i})<g\length({\mathcal{H}}^{(m_i-1)}|_n)\le g\length({\mathcal{H}}^{(m_{i-1})}|_n)$. Thus for every  $m_i\in {\mathcal{M}}_n$ we have:
\[\G_{m_i}\not=\textbf{0} \ {\rm and}\ \length(\G_{m_i})<\length({\mathcal{H}}^{(m_{i-1})}|_n)\]
Applying remark \ref{remark1} (\textbf{IV}) with $N=m_{i-1}$ and $k=m_i$, it follows [from $(ii)\Rightarrow (iii)$] that $g\length({\mathcal{H}}^{(m_{i-1})}|_n)>g\length({\mathcal{H}}^{(m_i)}|_n)$.
Therefore,
\[g\length({\mathcal{H}}^{(m_0)}|_n)> g\length({\mathcal{H}}^{(m_1)}|_n)>g\length({\mathcal{H}}^{(m_2)}|_n)>...\ge 0.\]
Thus, the set ${\mathcal{M}}_n$ must be finite. 
Call $\delta_n=\max ({\mathcal{M}}_n)$.
Since $g\length({\mathcal{H}}^{(k)}|_n)=g\length({\mathcal{H}}^{(\delta_n)}|_n)$ for all $k\ge \delta_n$, Eq. (\ref{main eq}) follows from remark \ref{remark1} (\textbf{V}) [$(iii) \Rightarrow (i)$].
\end{description}
The proof of the theorem is complete.
\end{proof}
Throughout the rest of this paper we follow the notation of theorem \ref{main}. Moreover we shall consider $\delta_n\in\omega$ as the smallest integer greater than $n$ such that Eq. (\ref{main eq}) holds.
\begin{corollary} \label{chain}
The infinite Gauss-Jordan elimination algorithm generates a chain of submatrices in \emph{QHF}:
\begin{equation} \label{chain of submatrices of QHF} 
{\mathcal{H}}^{(\delta_0)}\!\!\mid_0 \sqsubset {\mathcal{H}}^{(\delta_1)}\!\!\mid_1\sqsubset ... \sqsubset {\mathcal{H}}^{(\delta_n)}\!\!\mid_n\sqsubset ...
\end{equation}
\end{corollary}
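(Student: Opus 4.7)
The plan is to derive the corollary directly from Theorem \ref{main}, treating the chain rung by rung. Given $n\in\omega$, I would establish that ${\mathcal{H}}^{(\delta_n)}\!\!\mid_n\sqsubset {\mathcal{H}}^{(\delta_{n+1})}\!\!\mid_{n+1}$, and the chain (\ref{chain of submatrices of QHF}) then follows by induction on $n$. One subtlety to keep in mind from the outset is that the map $n\mapsto \delta_n$ is not asserted to be monotone, so a direct comparison of ${\mathcal{H}}^{(\delta_n)}$ with ${\mathcal{H}}^{(\delta_{n+1})}$ is not available and something slightly more careful is needed.

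The workaround is to pick a common stabilization index. Choosing $k\ge \max(\delta_n,\delta_{n+1})$, two separate applications of Theorem \ref{main} yield ${\mathcal{H}}^{(k)}\!\!\mid_n={\mathcal{H}}^{(\delta_n)}\!\!\mid_n$ and ${\mathcal{H}}^{(k)}\!\!\mid_{n+1}={\mathcal{H}}^{(\delta_{n+1})}\!\!\mid_{n+1}$. Now within the single matrix ${\mathcal{H}}^{(k)}$, the first restriction is the block of its top $n+1$ rows and the second is the block of its top $n+2$ rows, so ${\mathcal{H}}^{(k)}\!\!\mid_n\sqsubset {\mathcal{H}}^{(k)}\!\!\mid_{n+1}$ is immediate by inclusion of rows. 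Combining the three identifications gives the required rung.

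A subsidiary point worth recording is that each ${\mathcal{H}}^{(\delta_n)}\!\!\mid_n$ is itself in QHF, so that (\ref{chain of submatrices of QHF}) is genuinely a chain of quasi-Hermite matrices rather than of arbitrary submatrices. This is inherited from ${\mathcal{H}}^{(\delta_n)}$, which is in QHF by construction of the algorithm. Restricting to the top $n+1$ rows preserves the strictly increasing length condition \textit{(i)} of Definition \ref{definition-Hermite basis} trivially; postulate \textit{(iii)} for a column $\length_j$ whose pivot row survives the restriction remains valid for the surviving rows; and for a column associated with a pivot row discarded by the restriction, the zero constraint on the surviving entries was already in force in the ambient matrix, so it is inherited as well.

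I do not expect any real obstacle: the whole combinatorial burden of controlling how row permutations and Jordan back-substitutions interact across the successive matrices ${\mathcal{H}}^{(k)}$ has already been discharged by Theorem \ref{main}, and what remains here is essentially bookkeeping around the non-monotonicity of the indices $\delta_n$ and the observation that the QHF property descends to top row-blocks.
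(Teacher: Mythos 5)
Your proof is correct, and it reaches the same endpoint as the paper's by a slightly different route around the same obstacle. The paper first observes that any stabilization index for level $n+1$ is automatically one for level $n$ (restrict the stabilized top $(n+2)$-row block to its top $n+1$ rows), then invokes the convention that $\delta_n$ is chosen minimal to conclude $\delta_n\le\delta_{n+1}$, and finally applies Theorem \ref{main} with $N=\delta_n$ and $k=\delta_{n+1}$ to get ${\mathcal{H}}^{(\delta_n)}\!\!\mid_n={\mathcal{H}}^{(\delta_{n+1})}\!\!\mid_n\sqsubset{\mathcal{H}}^{(\delta_{n+1})}\!\!\mid_{n+1}$. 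You instead bypass the monotonicity question entirely by passing to a common index $k\ge\max(\delta_n,\delta_{n+1})$ and comparing the two restrictions inside the single matrix ${\mathcal{H}}^{(k)}$. Your version has the small advantage of not depending on the minimality convention for $\delta_n$ (it works for any admissible choice of stabilization indices), whereas the paper's version yields the monotonicity of the sequence $(\delta_n)$ as a by-product. Your closing observation that the QHF property descends to top row-blocks is a worthwhile point the paper leaves implicit; your case analysis for postulate \textit{(iii)} is sound, though in the last case nothing actually needs to be inherited, since the constraints attached to pivot rows discarded by the restriction are simply not imposed on the submatrix.
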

\begin{proof} As ${\mathcal{H}}^{(\delta_{n+1})}\!\!\mid_{n+1}= {\mathcal{H}}^{(k)}\!\!\mid_{n+1}$ for all $k\ge\delta_{n+1}$, it follows that ${\mathcal{H}}^{(\delta_{n+1})}\!\!\mid_n= {\mathcal{H}}^{(k)}\!\!\mid_n$ for all $k\ge\delta_{n+1}$. Thus $\delta_{n+1}$ satisfies Eq. (\ref{main eq}). As $\delta_n$ also satisfies Eq. (\ref{main eq}) it follows that $\delta_n\le\delta_{n+1}$. Now we can apply theorem \ref{main} with $k=\delta_{n+1}$ and $N=\delta_{n}$. We infer
\[{\mathcal{H}}^{(\delta_{n})}\!\!\mid_{n}={\mathcal{H}}^{(\delta_{n+1})}\!\!\mid_{n}\sqsubset {\mathcal{H}}^{(\delta_{n+1})}\!\!\mid_{n+1},\]
for all $n\in\omega$ and the chain of submatrices in (\ref{chain of submatrices of QHF}) follows.
\end{proof} 
By virtue of (\ref{chain of submatrices of QHF}), we define the matrix $\QHF=(\QHF_i)_{i\in\omega}$ as follows. For every $n\in\omega$, the row $\QHF_n$ of $\QHF$ is defined to be the $n$-row (the last row) of the matrix ${\mathcal{H}}^{(\delta_n)}\!\!\mid_n$. Formally $\QHF\in\RFM$. The matrix $(\QHF_i)_{0\leq i\leq n}$ (as sequence of rows) is
\begin{equation} \label{main2}
           (\QHF_i)_{0\leq i\leq n}={\mathcal{H}}^{(\delta_n)}\!\!\mid_n.
\end{equation}
Moreover $\QHF_n$, being a row of ${\mathcal{H}}^{(\delta_n)}\!\!\mid_n$, it is a finite linear combination of $\A_k$, thus
\begin{equation}\label{rows of H} \QHF_n=\sum_{k=0}^{M_n}q_{nk}\A_k\end{equation}
such that $q_{nM_n}\not=0$ and $0\le M_n\le \delta_n$. 
\begin{example} \label{sec:Example1} {\rm
The infinite Gaussian elimination algorithm under rightmost pivoting is illustrated with the use of symbolic computation through its application on the $\omega\times\omega$ row and column finite matrix:}\end{example}
\begin{equation} \label{example1}\!\!\!\!\!  
\A=\left(\begin{array}{ccccccc}
a_0 & b_0 & 1 &  0 &  0  &  0  & ...  \\
0 & a_1 & b_1 &  1 &  0  &  0  & ...  \\
0 & 0 & a_2   & b_2  &  1  &  0  & ... \\
\vdots & \vdots & \vdots     & \vdots    &  \vdots  &  \vdots  & \vdots\vdots\vdots 
\end{array} \right).
\end{equation}  
The matrix $\A$ in (\ref{example1}) is associated with the second order linear difference equation with variable coefficients (see section \ref{sec:ExplicitFundamentalSolutions}).
Call  ${\mathcal{H}}^{(0)}=(a_0,b_0,1,0,...)$.
Using as pivot the rightmost $1$ of ${\mathcal{H}}^{(0)}$, the algorithm eliminates the corresponding element of the second row of $\A$ yielding the matrix:
\[ {\mathcal{H}}^{(1)}=\left(
\begin{array}{ccccc}
 a_0 & b_0 & 1 & 0&...  \\
 -a_0 b_1 & a_1-b_0 b_1 & 0 & 1&... 
\end{array}
\right).\]
The row $(0,0,a_2,b_2,1,0...)$ is inserted below ${\mathcal{H}}^{(1)}$. The rightmost 1s of the rows of ${\mathcal{H}}^{(1)}$ are used as pivots to clear corresponding elements of the inserted row yielding the matrix: 
\begin{equation}\label{H2}
{\mathcal{H}}^{(2)}=\left(
\begin{array}{cccccc}
 a_0& b_0 & 1 & 0 & 0&...  \\
 -a_0 b_1 & a_1-b_0 b_1 & 0 & 1 & 0&... \\
 a_0 b_1b_2-a_0 a_1 & -a_2 b_0+b_0b_1b_2-a_1b_2& 0 & 0 & 1 & ... 
\end{array}
\right).
\end{equation}

The process continues ad infinitum, yielding the chain of submatrices: 
\begin{equation}\label{Chain Order-2} {\mathcal{H}}^{(0)} \sqsubset {\mathcal{H}}^{(1)} \sqsubset {\mathcal{H}}^{(2)} \sqsubset ...\sqsubset \QHF\end{equation}
In this case, $\delta_n=n$ for all $n\in \omega$. More generally speaking, in the case of $\omega\times\omega$ matrices in lower echelon form, as in (\ref{example1}), the Jordan elimination part of the algorithm as well as row permutations do not take place. Upon algorithm completion the $\omega\times\omega$ matrix $\QHF$ is in  HF (or LRREF). As $\A$ and $\QHF$ are of zero left- nullities and the rows of $\QHF$ are finite linear combinations of the rows of $\A$ and vice-versa $\A, \QHF$ are left associates (for a general proof see the next paragraph).

In contrast to the LRREF of $\A$, there is no URREF for simple row and column finite matrices, such as $\A$, preserving left association. To see this consider the matrix 
\[\B=\left(\begin{array}{cccccc}
1 & 1 & 0 &  0   &  0      & ...  \\
0 &  1  & 1 & 0  &  0      & ...  \\
0 &  0  & 1   & 1  &  0   & ...  \vspace{-0.05in}\\
. & .   & .   & .    &  .   & ...  \vspace{-0.1in}\\
. & .   & .   & .    &  .   & ...  \vspace{-0.1in}\\
. & . & .   & .    &  .  &  ... 
\end{array} \right).\]
Formally $\B$ is the coefficient matrix of the first order linear difference equation: $y_{n+1}+y_n=0$, $n\in\naturals$.  As the URREF of $\B$ must have 0s above and below the leading 1s, the URREF of $\B$ must by the identity $\omega\times\omega$ matrix $\I$. But $\B$ and $\I$ are not left associates. 
For if otherwise, $\RS(\B)$ would equal $\RS(\I)$. Consequently, there would be a finite linear combination of the rows of $\B$ generating the row $(1,0,0,...)$ of $\I$. But this is impossible because all the finite linear combinations of rows of $\B$ yield row-lengths greater than or equal to $1$, while the length of $(1,0,0,...)$ is $0$.

The non-existence of row-equivalent URREFs for an extensive class of row-finite matrices prevents the implementation of the infinite Gauss-Jordan elimination in solving row-finite systems. Yet, even in this case the algorithm works by applying the Jordan elimination to $\B$. It provides the sequence of reduced matrices:
\[\begin{array}{ll}
\C^{(0)}=&\left(\begin{array}{ccrccc}
1 & 1& 0& 0 & 0 &...\end{array}\right),\vspace{0.1in}\\
\C^{(1)}=&\left(\begin{array}{ccrccc}
 1 & 0 & -1 & 0& 0& ... \\
 0 & 1 & 1 & 0 & 0& ...
\end{array}
\right)\end{array},\]
\[\begin{array}{ll}
\C^{(2)}=&\left(\begin{array}{lllrcc}
 1 & 0 & 0 & 1  &0&... \\
 0 & 1 & 0 & -1 & 0&... \\
 0 & 0 & 1 & 1 & 0&...
\end{array}\right),...
\end{array}\]
Unlike the sequence in (\ref{Chain Order-2}), the sequence $(\C^{(n)})_{n\in \omega}$ does not form a chain of submatrices, because as the process continues, the entries above the leading 1s become 0s and thus the first rows can never coincide. Moreover, all columns are progressively replaced by columns of the identity matrix. Upon algorithm completion, the identity $\omega\times\omega$ matrix is reached. 
The latter agrees with our previous conclusion that the URREF of $\B$ should be $\I$.
We thus need an infinite number of linear combinations to produce the row $(1,0,0,...)$.

As infinite sequences of row elementary operations may or may not preserve row equivalence, we conclude that in the case of row-finite matrices the primary definition of row equivalence through sequences of row elementary operations is not met. 

\subsection{Left-Association}
\label{sec:Left-Association}
In theorem \ref{A is row-equivalent to H} \hspace{0.001in} it will be shown that the infinite Gauss-Jodran elimination algorithm, applied to an arbitrary $\A\in\RFM$ under rightmost pivoting, constructs a matrix $\QHF\in\RFM$ in QHF, such that $\A$ and $\QHF$ are left associates.
\begin{definition}\label{full-length sequence} Let $Y$ be a nontrivial subspace of $\complex^{(\omega)}$ and $\emptyset\varsubsetneqq J\subseteq \omega$. A sequence $z=(z^{(n)})_{n\in J}$ in $Y\setminus \{0\}$ is said to be a \emph{full-length sequence} of $Y$, if for every $y\in Y\setminus \{0\}$ there exists some $k\in J$ such that $\length(z^{(k)})=\length(y)$.
\end{definition}
\begin{proposition} \label{full-length basis} Let $z=(z^{(n)})_{n\in J}$ be a full-length sequence of a non-trivial subspace $Y$ of $\complex^{(\omega)}$. If $\length\circ z: J\mapsto \omega$ is injective, then $z$ is a Hamel basis of $Y$.
\end{proposition}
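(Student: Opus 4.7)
My plan is to verify the two defining properties of a Hamel basis separately: linear independence and the fact that every element of $Y$ is a finite linear combination of the $z^{(n)}$s.

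For linear independence I would use the standard ``strictly largest length kills" argument. Suppose $\sum_{i=1}^{m}c_i z^{(k_i)}=\mathbf{0}$ with $k_1,\dots,k_m\in J$ distinct and $c_i\in\complex$. By injectivity of $\length\circ z$ the integers $\length(z^{(k_i)})$ are pairwise distinct, so some index, say $i_0$, attains the unique maximum $\ell^\ast=\length(z^{(k_{i_0})})$. Evaluating the sum at the coordinate $\ell^\ast$, every $z^{(k_i)}$ with $i\neq i_0$ vanishes there (its length is $<\ell^\ast$), so the $\ell^\ast$-th coordinate of the sum equals $c_{i_0}$ times the (non-zero) rightmost entry of $z^{(k_{i_0})}$. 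Hence $c_{i_0}=0$, and by induction on $m$ all $c_i=0$.

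For the spanning property I would proceed by strong induction on $\length(y)$, which is a non-negative integer since $y\neq \mathbf{0}$. Fix $y\in Y\setminus\{\mathbf{0}\}$. By the full-length hypothesis there exists $k\in J$ with $\length(z^{(k)})=\length(y)=:\ell$. Writing $a$ and $b$ for the $\ell$-th coordinates of $y$ and $z^{(k)}$ respectively (both non-zero by definition of length), set
\[
y':=y-\frac{a}{b}\,z^{(k)}.
\]
Then the $\ell$-th coordinate of $y'$ is $0$, while all coordinates to the right of $\ell$ are already $0$ in both $y$ and $z^{(k)}$; consequently either $y'=\mathbf{0}$ or $\length(y')<\ell$. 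Since $y'\in Y$, the inductive hypothesis (with the convention that the zero element is the empty combination) provides a finite linear combination $y'=\sum_{j}\lambda_j z^{(n_j)}$, and therefore $y=\tfrac{a}{b}z^{(k)}+\sum_j\lambda_j z^{(n_j)}$ is also a finite linear combination of the $z^{(n)}$s. The base case $\length(y)=0$ is immediate: $y=a\e_0$ and by full-length there is $k\in J$ with $\length(z^{(k)})=0$, so $z^{(k)}=b\e_0$ and $y=(a/b)z^{(k)}$.

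There is no genuine obstacle here; the only point requiring the hypotheses in full force is that at each reduction step a suitable index $k$ is guaranteed by the full-length property (without which the procedure could get stuck on an element whose length is not attained by any $z^{(n)}$), while injectivity of $\length\circ z$ is what makes the linear-independence argument work. Together with the well-ordering of $\omega$ applied to row-lengths, these two properties give that every $y\in Y$ is a \emph{finite} combination, completing the proof that $z$ is a Hamel basis of $Y$.
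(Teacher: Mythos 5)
Your proof is correct. The spanning half is essentially the paper's own argument: both reduce a nonzero $y\in Y$ by subtracting a suitable multiple of a $z^{(k)}$ of equal length (guaranteed by the full-length hypothesis) and descend through a strictly decreasing sequence of lengths, which must terminate by well-ordering; phrasing this as strong induction on $\length(y)$ rather than as an explicit finite descent $m_0>m_1>\dots\ge -1$ is only a cosmetic difference. Where you genuinely diverge is in the linear independence half. You evaluate a vanishing combination at the coordinate of the unique maximal length (unique by injectivity of $\length\circ z$), observe that only one term survives there, conclude its coefficient vanishes, and induct — a direct, elementary argument. The paper instead takes a finite subset $Q$ of $z$, pads it with canonical basis vectors $\e_n$ at the lengths not attained by $Q$, assembles a finite lower-triangular matrix with nonzero diagonal, and deduces independence from the nonvanishing of its determinant (the Casoratian $W(0)$). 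Your route is shorter and avoids any auxiliary construction; the paper's buys a connection to the Casoratian device it reuses later for fundamental solution sets (e.g.\ in the proof that $\mbox{\boldmath$\xi$}$ is a fundamental solution set, where $W(0)=|\I|=1$). Both are complete; there is no gap in your argument.
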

\begin{proof} First we show that $z$ spans $Y$. Let $y=(y_{0},y_{1},...,y_{m_0},0,0,...)$ be an arbitrary element of $Y$ such that $y_{m_0}\not=0$, that is $\length(y)=m_0\ge 0$. We next show that $y\in \Span(z)$. By definition \ref{full-length sequence}, there is a $n_0\in J$ such that $\length(z^{(n_0)})=\length(y)=m_0$. Thus $z^{(n_0)}=(z^{(n_0)}_0,z^{(n_0)}_1,...,z^{(n_0)}_{m_0},0,0,...)$\vspace{0.05in} with $z^{(n_0)}_{m_0}\not=0$. Call $\displaystyle a_{m_0}=\frac{y_{m_0}}{z^{(n_0)}_{m_0}}$ and $m_1=\length(y-a_{m_0}z^{(n_0)})$. Obviously $-1\le m_1<m_0$. If $m_1=-1$, that is $y-a_{m_0}z^{(n_0)}=0$, then $y\in \Span(z)$. Otherwise, as $y-a_{m_0}z^{(n_0)}\in Y\setminus \{0\}$ definition \ref{full-length sequence} implies that there is some $n_1\in J$ such that $m_1=\length(z^{(n_1)})\ge 0$. Proceeding in this way, we construct a sequence of integers such that $m_0>m_1>m_2>...\ge -1$. Thus, there exists some $k\in\omega$ such that $m_k=-1$, that is
\[y-a_{m_0}z^{(n_0)}-a_{m_1}z^{(n_1)}-...-a_{m_{k-1}}z^{(n_{k-1})}=0,\]
as asserted. 

It remains to be shown that $z$ is linearly independent. Let $(\e_n)_{n\in\omega}$ be the canonical basis of $\complex^{(\omega)}$. Let also $Q=\{z^{(i_0)},...,z^{(i_m)}\}$ be a finite subset of $z$ and 
${\mathcal{L}}_Q=\{\length(z^{(i_0)}),...,\length(z^{(i_m)})\}$. Without loss of generality, since $\length \circ z$ is injective, we assume that $\length(z^{(i_0)})<...<\length(z^{(i_m)})$ and we define the sequence:
\[\beta_{n}=\left\{\begin{array}{lll} \e_n  & {\rm if} \ n\not\in{\mathcal{L}}_Q\\
 &        & 0\le n\le \length(z^{(i_m)}) \\
   z^{(i_k)} & {\rm if} \ n=\length(z^{(i_k)}) \end{array}\right.  \]
Consider the $(\length(z^{(i_m)})+1)\times\omega$ matrix $\B$ with rows $\beta_n\in\complex^{(\omega)}$, $0\le n\le \length(z^{(i_m)})$. Call $\B_m$ the submatrix of $\B$ consisting of the first $\length(z^{(i_m)})+1$ columns of $\B$. Evidently $\B_{m}$ is a nonsingular $(\length(z^{(i_m)})+1)\times(\length(z^{(i_m)})+1)$ matrix, as being lower triangular having non-zero diagonal. The Casoratian $W(0)$ of $\B$ is the determinant of $\B_{m}$, whence $W(0)\not=0$. It follows that the rows of $\B$ are linearly independent. Thus $Q$, being a subset of a linearly independent set, is linearly independent. Since every finite subset of $z$ is linearly independent so is $z$. Accordingly $z$ is a Hamel basis of $Y$, as required.
\end{proof}
The Hamel basis $z$ in proposition \ref{full-length basis} will be called \emph{full-length basis} of $Y$. The set of non-zero rows of a row-finite matrix, say $\B$, in lower echelon form meets the conditions of proposition \ref{full-length basis} yielding a Hamel basis of $\RS(\B)$. Taking into account that a QHF is in lower echelon form,  the subsequent result is equivalent to the quasi-uniqueness of $\QHF$. 
\begin{corollary} \label{Hermite basis}
The non-zero rows of a \emph{QHF}, $\QHF$, of $\A$ is a Hamel basis of $\RS(\A)$ uniquely associated with $\A$, called Hermite basis of $\RS(\A)$.
\end{corollary}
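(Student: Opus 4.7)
The plan is to apply Proposition \ref{full-length basis} to the sequence of non-zero rows of $\QHF$, viewed as a sequence in $\RS(\A)$. By the hypothesis that $\QHF$ is a QHF of $\A$ together with Theorem \ref{theorem of row equivalence}, we have $\RS(\QHF)=\RS(\A)$, so every non-zero row of $\QHF$ lies in $\RS(\A)\setminus\{\mathbf{0}\}$, and since the zero rows contribute nothing to the span, the non-zero rows do generate $\RS(\A)$. It therefore suffices to verify the two hypotheses of Proposition \ref{full-length basis}: injectivity of $\length$ on the family of non-zero rows, and the full-length property.

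The injectivity of $\length\circ(\QHF_{j_i})_{i\in\omega}$ is immediate from property (\textit{i}) of Definition \ref{definition-Hermite basis}, which asserts that the sequence of lengths $(\mu_i)_{i\in\omega}$ of the non-zero rows is strictly increasing. The main content is the full-length condition: for every $y\in\RS(\A)\setminus\{\mathbf{0}\}$ there must exist some $j\in J$ with $\length(\QHF_j)=\length(y)$. Since $y\in\RS(\QHF)$, write $y=\sum_{k=1}^{r} c_k \QHF_{j_{i_k}}$ as a finite linear combination of non-zero rows with $c_k\neq 0$. Let $i^\ast=\max\{i_1,\dots,i_r\}$ and $\mu^\ast=\mu_{i^\ast}=\length(\QHF_{j_{i^\ast}})$. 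By property (\textit{ii}), the entry of $\QHF_{j_{i^\ast}}$ at column $\mu^\ast$ equals $1$; by property (\textit{iii}), the entry at column $\mu^\ast$ of every other non-zero row of $\QHF$ is $0$. Hence the entry of $y$ at column $\mu^\ast$ is exactly the coefficient of $\QHF_{j_{i^\ast}}$, which is non-zero; and for any column index $>\mu^\ast$ all the rows involved vanish by definition of $\length$. Thus $\length(y)=\mu^\ast=\length(\QHF_{j_{i^\ast}})$, establishing full-length.

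With both hypotheses of Proposition \ref{full-length basis} verified, the non-zero rows of $\QHF$ form a Hamel basis of $\RS(\A)$. The one step where care is needed is the full-length argument above: it is tempting to think $\length(y)$ could be smaller than the maximum row-length involved due to cancellation, and the key observation (enabled precisely by property (\textit{iii}) of the QHF, i.e.\ the row-reduced form) is that the ``pivot columns'' $\{\mu_i\}_{i\in\omega}$ admit no cancellation between distinct rows.

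Finally, to justify that this basis is \emph{uniquely associated with} $\A$, I would appeal to the quasi-uniqueness statement recorded above Definition \ref{definition-Hermite basis}'s consequences, namely that any two QHFs of $\A$ differ only by row permutations. Consequently the \emph{set} of non-zero rows is independent of the particular QHF chosen, so the Hermite basis is an invariant of $\A$. This gives the desired uniqueness of the Hermite basis of $\RS(\A)$ and completes the corollary.
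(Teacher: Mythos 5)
Your proposal is correct and follows essentially the same route as the paper: the paper likewise obtains the corollary by applying Proposition \ref{full-length basis} to the non-zero rows of $\QHF$ (noting that a matrix in lower echelon form automatically satisfies its hypotheses) and ties the uniqueness claim to the quasi-uniqueness statement that all QHFs of $\A$ differ only by row permutations. The only cosmetic difference is that your full-length verification invokes property (\textit{iii}) to rule out cancellation in the pivot column, whereas the strictly increasing lengths of property (\textit{i}) already guarantee that only the row of maximal index has a non-zero entry there.
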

From corollary \ref{Hermite basis} the next result immediately follows. \begin{corollary} \label{cor. row space} The row spaces of $\A$ and $\QHF$ coincide: $\RS(\A)=\RS(\QHF)$. 
\end{corollary}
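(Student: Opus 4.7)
The claim is a one-line consequence of the preceding corollary, so the plan is simply to extract it cleanly from what has already been proved. By the Hermite basis corollary, the family of \emph{non-zero} rows $(\QHF_{j_i})_{i\in\omega}$ of $\QHF$ is a Hamel basis of $\RS(\A)$; in particular,
\[
\RS(\A)=\Span(\QHF_{j_i})_{i\in\omega}.
\]
On the other hand, by definition $\RS(\QHF)=\Span(\QHF_i)_{i\in\omega}$, where the rows $\QHF_i$ with $i\in W$ are all equal to $\mathbf{0}$. Since zero vectors contribute nothing to a span, one has
\[
\Span(\QHF_i)_{i\in\omega}=\Span(\QHF_{j_i})_{i\in\omega}.
\]

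Combining these two equalities yields $\RS(\A)=\RS(\QHF)$, which is the desired identity. The only (very small) thing to notice is that dropping the zero rows from the indexing family does not alter the span, which is a general fact about linear spans in any vector space and requires no appeal to the row-finiteness of $\QHF$ or to the infinite Gauss--Jordan construction. Consequently there is no real obstacle here: once the Hermite basis corollary is in hand, the present corollary is immediate and essentially notational.
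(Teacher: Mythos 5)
Your proof is correct and follows exactly the route the paper intends: the paper derives this corollary with no further argument, stating only that it ``immediately follows'' from the Hermite basis corollary, and your write-up simply makes explicit the two spans involved and the harmless removal of zero rows. Nothing is missing.
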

The following proposition concerns the Gaussian elimination algorithm, applied solely to a row-finite matrix, as illustrated in example \ref{sec:Example1}.
\begin{proposition} \label{A is row-equivalent to G}
The infinite Gaussian elimination applied  to $\A\in\RFM$ results in a matrix $\G\in\RFM$ possessing the following properties:
\begin{description} 
\item[\textit{i})]  $\G$ and $\A$ are left associates.
\item[\textit{ii})]  The sequence of lengths of the non-zero rows of $\G$ is injective. 
\item[\textit{iii})]  The non-zero rows of $\G$ form a full-length sequence of $\RS(\A)$.
\item[\textit{iv})]  The non-zero rows of $\G$ form a full-length basis of $\RS(\A)$.
\end{description}
\end{proposition}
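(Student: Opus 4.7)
\medskip
\noindent\textbf{Proof proposal.} The plan is to verify the four claims in the stated order, chaining together the identities recorded in the algorithm description with Corollary \ref{cor. row space} and Proposition \ref{full-length basis}. For (i) the key ingredient is Eq. (\ref{G is LC of A}): $\G_n=\sum_{k=0}^{n}c_{nk}\A_k$ with $c_{nn}\neq 0$. Setting $c_{nk}=0$ for $k>n$, I would define $\Q=(c_{nk})_{(n,k)\in\omega\times\omega}$. Each row of $\Q$ has finite support, so $\Q\in\RFM$; moreover $\Q$ is lower-triangular with a non-vanishing diagonal, and a row-by-row forward-substitution argument produces an explicit lower-triangular inverse that is again row-finite. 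Hence $\Q$ is nonsingular in $\RFM$, and by construction $\Q\cdot\A=\G$, so $\A$ and $\G$ are left associates.

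For (ii) I would exploit that, in pure Gaussian mode, neither Jordan elimination nor row permutations ever take place, so $\G_k$ once produced is never altered and ${\mathcal{H}}^{(k-1)}_i=\G_i$ for every $i\le k-1$. Whenever $\G_k\neq\textbf{0}$, the observation immediately preceding Eq. (\ref{length of Gk}) --- namely that $\length(\G_k)\neq\length({\mathcal{H}}^{(k-1)}_i)$ for every $i<k$ --- therefore specialises to $\length(\G_k)\neq\length(\G_i)$ for all such $i$, so the restriction of $\length\circ\G$ to $J:=\{k\in\omega:\G_k\neq\textbf{0}\}$ is injective.

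For (iii) I would use (i) and Corollary \ref{cor. row space} to replace $\RS(\A)$ by $\RS(\G)$. Given $y\in\RS(\A)\setminus\{\textbf{0}\}$, write $y=\sum_{j\in F}a_j\G_j$ with $F\subseteq J$ finite and $a_j\neq 0$. By (ii) the lengths $\{\length(\G_j):j\in F\}$ are pairwise distinct, so a unique index $j^{*}\in F$ attains the maximum length $\mu$. All other summands contribute only in positions strictly below $\mu$, while the $\mu$-th coordinate of $y$ equals $a_{j^{*}}(\G_{j^{*}})_{\mu}\neq 0$. Consequently $\length(y)=\length(\G_{j^{*}})$, so $(\G_j)_{j\in J}$ is a full-length sequence of $\RS(\A)$. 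Claim (iv) then follows directly from Proposition \ref{full-length basis} applied with $Y=\RS(\A)$ and $z=(\G_j)_{j\in J}$.

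The delicate point is (i): one must verify that a lower-triangular row-finite matrix with non-vanishing diagonal is invertible inside $\RFM$, not merely in some larger matrix algebra. This reduces to checking that the coefficients of the candidate inverse, produced by forward substitution row by row, really assemble into a row-finite matrix --- a small but necessary verification. Once (i) is secured, parts (ii)--(iv) are essentially bookkeeping on top of the length identity already recorded by the algorithm.
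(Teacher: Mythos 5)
Your proposal follows essentially the same route as the paper's own proof: it builds the lower-triangular row-finite matrix from the coefficients in Eq.~(\ref{G is LC of A}) and invokes its nonsingularity for (i), derives (ii) from the pivot-elimination length identity, (iii) from the unique maximal-length summand, and (iv) from Proposition~\ref{full-length basis}. The only difference is that you explicitly flag the need to check that the lower-triangular inverse is itself row-finite, a point the paper asserts without comment; your version is, if anything, slightly more careful there.
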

\begin{proof}
\begin{description} 
\item[\textit{i})]
The rows $\{\G_n\}_{n\in\omega}$ of the resulting matrix $\G=(g_{nm})_{(n,m)\in\omega\times\omega}$\vspace{0.05in} are linear combinations of preceding rows of $\A$, that is $\G_n=\sum_{k=0}^nc_{nk}\A_k$ with $c_{nn}\not=0$.
Define the row-finite matrix $\C=(c_{nm})_{(n,m)\in\omega\times\omega}$ with rows  $\C_n=(c_{n0},c_{n1},...,c_{nn},0,0,...)=\sum_{k=0}^nc_{nk}\I_k$ for all $n\in \omega$. That is, the rows
$\C_n$ are produced by applying the same sequence of row elementary operations to $\I\in\RFM$, as those transforming $\A$ to $\G$. As $c_{nn}\not=0$ the matrix $\C$ is a lower triangular whose main diagonal consists of non-zero entries. Thus, $\C$ is nonsingular. Moreover, as $g_{nm}=\sum_{k=0}^nc_{nk}a_{km}$ it follows that $\C\cdot \A=\G$. The latter entails that $\G, \A$ are left associates.
\item[\textit{ii})] Let $(\G_j)_{j\in J}$, be the sequence of non-zero rows of $\G$ and $\length_j=\length(\G_j)$. Consider $m, n\in J$ such that $m<n $. The entries $g_{nk}$ of $\G_n$ are positioned below the entries of $\G_m$. The entry $g_{n\length_m}=0$, due to the elimination by the pivot element $g_{m\length_m}$. On the contrary, assume that $\length_n=\length_m$.
Since $g_{n\length_n}$ is the rightmost non-zero element of $\G_n$ we have $g_{n\length_n}\not=0$. The assumption $\length_n=\length_m$ leads to the contradictory statement: $0\not=g_{n\length_n}=g_{n\length_m}=0$, whence the assertion.
\item[\textit{iii})] Let $\R\in\RS(\A)$ and $\R\not=\textbf{0}$. As $\G$ and $\A$ are left associates, it follows from theorem \ref{theorem of row equivalence} that $\RS(\A)=\RS(\G)$. Hence, the non-zero rows of $\G$ span $\RS(\A)$ and therefore $\R=\sum_{k=0}^n \alpha_k\G_{k}$. The foregoing statement (ii) entails that among the terms $\G_0,\G_1,...,\G_n$ in the sum of $\R$, there is a unique $\G_{m}$ with $0\le m\le n$ such that $\length(\G_{m})>\length(\G_{k})$ for all $0\le k\le n$ with $k\not= m$. Therefore $\displaystyle\length(\G_{m})>\length(\sum_{\substack{k=0 \\ k\not=m}}^n \alpha_k\G_{k})$, whence $\displaystyle\length(\G_{m})=\length(\sum_{k=0}^n \alpha_k\G_{k})=\length(\R)$, as required.
\item[\textit{iv})] As the conditions of proposition \ref{full-length basis} are satisfied by statements (\textit{ii}) and (\textit{iii}), the result follows.
\end{description}
\end{proof}
As a consequence, the infinite Gaussian elimination is a rule, alternative to the axiom of countable choice, which
constructs a full-length basis of $\RS(\A)$.

\begin{proposition} \label{Bourbaki21} 
\begin{description} The following statements hold:
\item[\textit{i})] $\Q\in \RFM$ is nonsingular if and only if the rows of $\Q$ form a Hamel basis of $\complex^{(\omega)}$ \emph{(see Fulkerson~\cite[Corollary 2.4 pp. 15]{Fu:Th})}.
 \item[\textit{ii})] 
 Let $\A\in \RFM$. Let also $\{\B_{j}\}_{j\in J}$ be a Hamel basis of $\RS(\A)$ and $\alpha:\complex^{(\omega)}\ni x\mapsto \alpha(x)=x\cdot \A \in\complex^{(\omega)}$ be the endomorphism induced by $\A$. If $(\Q_j)_{j\in J}$, $J\subset \omega$, is a sequence in $\complex^{(\omega)}$ such that $\alpha(\Q_j)=\B_j$ \emph{(}or $\Q_j\cdot\A=\B_j$\emph{)} for all $j\in J$, then $(\Q_j)_{j\in J}$ is a Hamel basis of a complementary space of $\LNS(\A)$ \emph{(see Bourbaki~\cite[Proposition 21 pp. 218]{Bou:Alg})}. 
\end{description}
\end{proposition}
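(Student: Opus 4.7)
The plan is to treat the two parts independently, leveraging in both cases the right-action endomorphism $\phi_\Q \colon \complex^{(\omega)} \to \complex^{(\omega)}$, $x \mapsto x \cdot \Q$, whose image is precisely $\RS(\Q)$ since $x \cdot \Q = \sum_i x_i \Q_i$ is a finite linear combination of the rows $\Q_i$ whenever $x \in \complex^{(\omega)}$.

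For part (i), I would argue that nonsingularity of $\Q$ in $\RFM$ is equivalent to bijectivity of $\phi_\Q$ on $\complex^{(\omega)}$, and that bijectivity of $\phi_\Q$ in turn says precisely that the rows $(\Q_i)_{i\in\omega}$ form a Hamel basis of $\complex^{(\omega)}$: surjectivity amounts to spanning, and injectivity to linear independence of the rows. The forward direction is immediate, since $\phi_{\Q^{-1}}$ is the two-sided inverse of $\phi_\Q$ at the level of endomorphisms. For the converse, assuming the rows form a Hamel basis, $\phi_\Q$ is a linear automorphism, and I would construct $\Q^{-1}$ by letting its $i$-th row be $\phi_\Q^{-1}(\e_i)$. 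Each such row lies in $\complex^{(\omega)}$ by construction, so $\Q^{-1} \in \RFM$ automatically. The identity $\Q^{-1} \cdot \Q = \I$ then holds row by row, while $\Q \cdot \Q^{-1} = \I$ follows from $(\Q \cdot \Q^{-1}) \cdot \Q = \Q \cdot \I = \I \cdot \Q$ combined with the injectivity of $\phi_\Q$ applied to each row.

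For part (ii), set $V = \Span(\Q_j)_{j\in J}$. I plan to verify in turn that $(\Q_j)_{j\in J}$ is linearly independent, that $V \cap \LNS(\A) = \{0\}$, and that $V + \LNS(\A) = \complex^{(\omega)}$, which together establish both the basis claim and the complementarity claim. Each of these reduces to applying $\alpha$ to a finite sum $\sum c_j \Q_j$ and invoking $\alpha(\Q_j) = \B_j$ together with the assumed basis property of $(\B_j)_{j\in J}$: a vanishing relation yields $\sum c_j \B_j = 0$, hence $c_j = 0$; and for an arbitrary $x \in \complex^{(\omega)}$, the decomposition $\alpha(x) = \sum c_j \B_j$ in $\RS(\A)$ gives the splitting $x = \sum c_j \Q_j + (x - \sum c_j \Q_j)$ with the second summand in $\LNS(\A)$.

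The only subtle point is the converse in part (i): the non-commutativity of multiplication in $\RFM$ forces one to check $\Q^{-1} \cdot \Q = \I$ and $\Q \cdot \Q^{-1} = \I$ separately, and the second identity is not automatic from the first without an injectivity argument. Once bijectivity of $\phi_\Q$ is in hand, the cancellation $(\Q \cdot \Q^{-1}) \cdot \Q = \I \cdot \Q$ yields $\Q \cdot \Q^{-1} = \I$. Part (ii) is then a routine direct-sum decomposition, amounting to the observation that any set-theoretic section of a surjective linear map $\alpha \colon \complex^{(\omega)} \to \RS(\A)$ over a basis of the target spans a complement of $\ker(\alpha) = \LNS(\A)$; in lieu of detailed verification one may invoke the cited references of Fulkerson and Bourbaki.
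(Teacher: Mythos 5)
Your proof is correct, but note that the paper does not actually prove this proposition at all: both parts are stated with pointers to the literature (part (i) to Fulkerson's Corollary 2.4, part (ii) to Bourbaki's Proposition 21 on sections of surjective linear maps), so any self-contained argument is by definition a different route. What you supply is essentially a reconstruction of those two cited results in the specific setting of $\RFM$. For part (i), your reduction of nonsingularity to bijectivity of the right-multiplication map $\phi_\Q$ is sound, and you correctly flag the one genuinely delicate point, namely that in the non-commutative algebra $\RFM$ the identity $\Q\cdot\Q^{-1}=\I$ does not follow for free from $\Q^{-1}\cdot\Q=\I$; your cancellation $(\Q\cdot\Q^{-1})\cdot\Q=\I\cdot\Q$ plus injectivity of $\phi_\Q$ closes that gap, using only the associativity of $\RFM$ that the paper establishes in Section 2. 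For part (ii), your three checks (independence of the $\Q_j$, trivial intersection with $\LNS(\A)$, and the splitting $x=\sum c_j\Q_j+(x-\sum c_j\Q_j)$) are exactly the content of the Bourbaki proposition specialized to $\alpha(x)=x\cdot\A$, whose image is $\RS(\A)$ because elements of $\complex^{(\omega)}$ have finite support. The trade-off is the usual one: the paper's citation keeps the exposition short and signals that these are known facts about row-finite matrices, while your version makes the text self-contained and makes visible where row-finiteness and associativity are actually used.
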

\begin{theorem} \label{A is row-equivalent to H}
The composite of the infinite sequence of row elementary operations, determined by the infinite Gauss-Jordan elimination algorithm for reducing $\A$ to $\QHF$, is represented by a nonsingular row-finite matrix $\Q$ such that $\Q\cdot\A=\QHF$, namely $\A$, $\QHF$ are left associates. Moreover, the matrix $\QHF$ is a \emph{QHF} of $\A$.
\end{theorem}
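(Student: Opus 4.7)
The theorem asserts three things: $\QHF$ is in QHF, there exists a nonsingular $\Q\in\RFM$ with $\Q\cdot\A=\QHF$, and consequently $\A,\QHF$ are left associates. The third is immediate from Definition \ref{def left association} once the first two are proved. My plan is to read $\QHF$ and $\Q$ off the stabilized chain of Corollary \ref{chain} and then verify nonsingularity of $\Q$ via the Hamel-basis criterion of Proposition \ref{Bourbaki21}(i), combining the Hermite-basis structure of Corollary \ref{Hermite basis} with a spanning argument for $\LNS(\A)$.

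The QHF property of $\QHF$ is essentially immediate: by Eq. (\ref{main2}) the first $n+1$ rows of $\QHF$ coincide with ${\mathcal{H}}^{(\delta_n)}\!\!\mid_n$, a matrix in QHF at every finite stage by algorithmic construction, and each of the three conditions in Definition \ref{definition-Hermite basis} is local to finitely many rows and therefore transfers at once to $\QHF$. The matrix $\Q$ is defined row-by-row from Eq. (\ref{rows of H}): $\Q_n=(q_{n0},\ldots,q_{nM_n},0,0,\ldots)\in\complex^{(\omega)}$, so $\Q\in\RFM$, and the identity $\Q\cdot\A=\QHF$ is just Eq. (\ref{rows of H}) read row by row.

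For nonsingularity of $\Q$, by Proposition \ref{Bourbaki21}(i) I must show the rows of $\Q$ form a Hamel basis of $\complex^{(\omega)}$. Let $J$ index the non-zero rows of $\QHF$ and $W=\omega\setminus J$. The first step is that $\{\QHF_j\}_{j\in J}$ is a Hamel basis of $\RS(\A)$: this is essentially Corollary \ref{Hermite basis}, and to avoid any circularity with the left-association being proved, one re-derives it via Proposition \ref{full-length basis}, using that at every finite step ${\mathcal{H}}^{(k)}$ and $\A^{(k)}$ are row-equivalent and that Jordan elimination and row permutations preserve the length of every non-zero row, so the lengths $\{\length(\QHF_j)\}_{j\in J}$ already exhaust those realized in $\RS(\A)$. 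Proposition \ref{Bourbaki21}(ii), applied with $\Q_j\cdot\A=\QHF_j$, then produces $\{\Q_j\}_{j\in J}$ as a Hamel basis of a complement $V$ of $\LNS(\A)$. For $i\in W$, the zero row $\QHF_i$ is produced when $\G_i=\textbf{0}$, so by Eq. (\ref{G is LC of A}) one has $\Q_i=(c_{i0},\ldots,c_{i,i-1},1,0,0,\ldots)$; since zero rows are never altered by subsequent Jordan eliminations or permutations, this form is final, giving $\length(\Q_i)=i$, $q_{ii}=1$, and $\Q_i\in\LNS(\A)$. Distinct lengths make $\{\Q_i\}_{i\in W}$ linearly independent, and combined with $\{\Q_j\}_{j\in J}\subset V$ and $V\cap\LNS(\A)=\{0\}$, the full family $\{\Q_n\}_{n\in\omega}$ is linearly independent.

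The main obstacle is to upgrade linear independence to a Hamel basis, equivalently to show $\Span\{\Q_i\}_{i\in W}=\LNS(\A)$. My key lemma is that every non-zero $x\in\LNS(\A)$ satisfies $k:=\length(x)\in W$: from $x\cdot\A=\textbf{0}$ and $x_k\neq 0$ one obtains $\A_k\in\Span\{\A_0,\ldots,\A_{k-1}\}=\RS({\mathcal{H}}^{(k-1)})$, and the QHF structure of ${\mathcal{H}}^{(k-1)}$ lets me read the expansion $\A_k=\sum_j a_{k,\mu_j}{\mathcal{H}}^{(k-1)}_j$ directly off the pivot columns (each $\mu_j$-th column of ${\mathcal{H}}^{(k-1)}$ has a single $1$ at row $j$), but this is exactly what the rightmost-pivot Gaussian elimination at step $k$ subtracts, so $\G_k=\textbf{0}$ and $k\in W$. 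Given the lemma, strong induction on $\length(x)$ finishes the argument: since $q_{kk}=1$, the vector $x-x_k\Q_k$ lies in $\LNS(\A)$ with length strictly less than $k$, so by induction it belongs to $\Span\{\Q_i\}_{i\in W}$, and so does $x$. This establishes the Hamel-basis property for the rows of $\Q$, whence $\Q$ is nonsingular by Proposition \ref{Bourbaki21}(i), and $\A,\QHF$ are left associates.
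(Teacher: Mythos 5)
Your proof is correct, and its skeleton coincides with the paper's: you define $\Q$ row-by-row from Eq.~(\ref{rows of H}), reduce nonsingularity to the Hamel-basis property of the rows of $\Q$ via Proposition \ref{Bourbaki21}(\textit{i}), split the rows into the families indexed by $W$ and $J$, handle the $J$-family with Proposition \ref{Bourbaki21}(\textit{ii}) applied to the Hermite basis $\{\QHF_j\}_{j\in J}$, and read the QHF property of $\QHF$ off the stabilized finite stages ${\mathcal{H}}^{(\delta_n)}\!\!\mid_n$. Where you genuinely diverge is the proof that $\{\Q_w\}_{w\in W}$ spans $\LNS(\A)$. The paper obtains this indirectly: it passes to the auxiliary Gaussian-only matrix $\C$ of Proposition \ref{A is row-equivalent to G}, which is lower triangular with non-zero diagonal and so has rows forming a Hamel basis of $\complex^{(\omega)}$, applies Proposition \ref{Bourbaki21}(\textit{ii}) to $\{\C_j\}_{j\in J}$ to exhibit a complement of $\LNS(\A)$, concludes that $\{\C_w\}_{w\in W}$ must be a basis of $\LNS(\A)$, and finally identifies $\C_w$ with $\Q_w$ on the grounds that zero rows are created by the Gaussian part alone and never altered afterwards. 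You instead prove the spanning directly: your lemma that every non-zero $x\in\LNS(\A)$ has $\length(x)\in W$ (because $x\cdot\A=\textbf{0}$ forces $\A_k\in\Span\{\A_0,\dots,\A_{k-1}\}=\RS({\mathcal{H}}^{(k-1)})$, which the rightmost-pivot elimination detects by producing $\G_k=\textbf{0}$, the expansion coefficients being readable off the clean pivot columns), followed by descent on $\length(x)$ using $q_{kk}=1$. Your route is more self-contained: it avoids the identification $\C_w=\Q_w$ (which the paper asserts rather quickly, since a priori two different linear combinations of $\A_0,\dots,\A_w$ could both represent the zero row) and it makes explicit the useful characterization $W=\{k:\A_k\in\Span(\A_0,\dots,\A_{k-1})\}$, at the cost of having to verify the lemma, which you do correctly. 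You are also more careful than the paper in noting that Corollary \ref{Hermite basis} cannot be invoked for $\RS(\A)$ before $\RS(\QHF)=\RS(\A)$ is known, and in supplying the full-length argument that closes that gap.
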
  
\begin{proof} Let $\A=(a_{nm})_{(n,m)\in\omega\times\omega}$, $\QHF=(h_{nm})_{(n,m)\in\omega\times\omega}$, $\Q=(q_{nm})_{(n,m)\in\omega\times\omega}$. In view of Eq. (\ref{rows of H}), we can define \[\Q_n=\sum_{k=0}^{M_n}q_{n,k}\I_k=(q_{n,0},q_{n,1},...,q_{n,M_n},0,0,...),\  \ q_{n,M_n}\not=0. \]
The sequence $\Q=(\Q_n)_{n\in\omega}$\vspace{0.03in} can be viewed as a row-finite matrix whose rows are $\Q_n$. It follows from Eq. (\ref{rows of H}) that $h_{n,m}=\sum_{k=0}^{M_n}q_{n,k}a_{k,m}$, whence $\Q\cdot \A=\QHF$. 
Thus, $\Q$ represents the composite of the infinite sequence of row elementary operations used by the infinite Gauss-Jordan elimination algorithm for reducing $\A$ to $\QHF$. 

In order to show that $\Q$ is nonsingular, it suffices to show that $\{\Q_n, n\in\omega\}$ is a Hamel basis of $\complex^{(\omega)}$ (see proposition \ref{Bourbaki21} (i)). As zero rows are exclusively produced by the Gaussian elimination and their positions remain unchanged under row permutations, it follows that each zero row $\QHF_w$ is a linear combination of preceding rows of $\A_w$ including row $\A_w$, that is $\QHF_w=\sum_{k=0}^{w}q_{w,k}\A_{k}$. Moreover,
$\Q_w=\sum_{k=0}^{w}q_{w,k}\I_k=(q_{w,0},q_{w,1},...,q_{w,w},0,0,...)$ with $q_{w,w}\not=0$ and $\Q_w\in\LNS(\A)$.
Following the notation of the proof of proposition \ref{A is row-equivalent to G},  $\G_w=\QHF_w$, whence $\C_w=\Q_w$ for all $w\in W$ and $\C_w\in\LNS(\A)$. As $\C$ is nonsingular, proposition \ref{Bourbaki21} (i) implies that $\{\C_n\}_{n\in\omega}$ is a Hamel basis of $\complex^{(\omega)}$. Let $J=\omega\setminus W$. The subset $\{\C_j\}_{j\in J}$ of $\{\C_n\}_{n\in\omega}$ satisfies $\C_j\cdot \A=\G_j$ for all $j\in J$. As  $\{\G_j\}_{j\in J}$ is a Hamel basis of $\RS(\A)$ (see proposition \ref{A is row-equivalent to G}), it follows from proposition \ref{Bourbaki21} (ii) that $\{\C_j\}_{j\in J}$ is a Hamel basis of a complementary space of $\LNS(\A)$. Thus, $\{\Q_w\}_{w\in W}$ (or $\{\C_w\}_{w\in W}$) is a Hamel basis of $\LNS(\A)$. As $\Q_j\cdot \A=\QHF_j$ and $\{\QHF_j\}_{j\in J}$ is a Hamel basis of $\RS(\A)$ (see corollary \ref{Hermite basis}), proposition \ref{Bourbaki21} (ii) entails that $\{\Q_j, j\in J\}$ is a Hamel basis of a complementary space of $\LNS(\A)$, whence $\{\Q_n, n\in\omega\}=\{\Q_w, w\in W\}\cup\{\Q_j, j\in J\}$ is a Hamel basis of $\complex^{(\omega)}$, as claimed. 

In order to show that $\QHF$ is a QHF  of $\A$,  consider any two non-zero rows $\QHF_i$ and $\QHF_n$ of $\QHF$ such that $i<n$. According to Eq. (\ref{main2}), there exists $N=\delta_n$ with $N\ge n$ such that $\QHF_i$ and $\QHF_n$ are rows of ${\mathcal{H}}^{(N)}\!\!\mid_{n}$. Since ${\mathcal{H}}^{(N)}\!\!\mid_{n}$ is in QHF the postulates of definition \ref{QHF} are fulfilled. 
\end{proof}
\begin{remark} \label{alternative proof}
{\rm Theorem \ref{A is row-equivalent to H} shows, among others, that $\QHF$ and $\A$ are left associates. An alternative and considerably shorter proof of $\QHF$ and $\A$ left-association is shown hereby. Remark \ref{remark1}(I) implies that $\nul(\QHF)=\nul(\G)$. Proposition \ref{A is row-equivalent to G} entails that $\G$ and $\A$ are left associates, whence $\nul(\A)=\nul(\G)=\nul(\QHF)$. Since $\RS(\A)=\RS(\QHF)$ (see corollary \ref{cor. row space}), theorem \ref{theorem of row equivalence} implies that $\QHF$ and $\A$ are left associates, as required.}
\end{remark}
As in the finite dimensional case, the existence of a QHF namely $\QHF$ of $\A\in \RFM$, is deduced from the constructiveness of $\QHF$. Even though $\QHF$ is obtained by performing an infinite sequence of elementary row operations on $\A$, each row of $\QHF$ is a finite number of linear combinations of the rows of $\A$.
\section{Construction of the General Solution Sequence for Row-Finite Linear Systems}
\label{sec:ConstructionOfRow-FiniteSystemSolutions} 
Let $\QHF$ be the QHF of the coefficient matrix $\A$ of Eq. (\ref{MQLR}) constructed by the infinite Gauss-Jordan elimination algorithm. Let also $\Q$ be the nonsingular matrix representing the composite of elementary row operations transforming $\A$ to $\QHF$, i.e. $\Q\cdot \A=\QHF$. Multiplying both sides of Eq. (\ref{MQLR}) by $\Q$ we have: $\A \cdot y= g \Longrightarrow  \QHF\cdot y= \Q\cdot g$.
As $\Q$ is nonsingular the converse statement also holds by multiplying both sides of $\QHF\cdot y= \Q\cdot g$ by $\Q^{-1}$ thus:\vspace{-0.05in}
\begin{equation} \label{equivalent-systems}
\A \cdot y= g \Longleftrightarrow  \QHF\cdot y= \Q\cdot g.
\end{equation}
Let us call $\textbf{k} \stackrel{{\rm def}}{=}\Q\cdot g\in \Com$. According to Eq. (\ref{equivalent-systems}), the system  (\ref{MQLR}) is equivalent to the system
\begin{equation} \label{equivalent-system}
\QHF\cdot y= \textbf{k},
\end{equation}
meaning that any solution of Eq. (\ref{MQLR}) is a solution of Eq. (\ref{equivalent-system}) and vise versa.
\subsection{General Homogeneous Solution Sequence}
\label{sec:HomogeneousSolution} 
Throughout the paper, the cardinality of a set $X$ is denoted by  $\card(X)$. The set of all lengths of the elements in $\RS(\A)\setminus \{0\}$ will be denoted by $\lengths_{\A}$.
The set $\{\mu_0,\mu_1,\mu_2,...\}$ of lengths of the non-zero rows of a QHF $\QHF$ of $\A$ and the set $\lengths_{\A}$ coincide. This is due to
the fact that the sequence of non-zero rows of $\QHF$ is a full-length basis of $\RS(\A)$ (see the discussion following proposition \ref{full-length basis}).
The set $\clengths_{\A}=\omega\setminus \lengths_{\A}$ is the set of \emph{inaccessible row-lengths} of $\RS(\A)$. The co-dimension of $\RS(\A)$, as a subspace of $\complex^{(\omega)}$, is called \emph{deficiency} (or \emph{defect}) of $\A$ and is denoted by $\defic(\A)$. Since $\RS(\A)=\RS(\QHF)$, it follows that $\defic(\A)=\defic(\QHF)$.
\begin{proposition}\label{deficiency} The deficiency of a row-finite matrix $\A$ coincides with the number of inaccessible row lengths of the row space of $\A$:
\begin{equation}\label{deficiency of A} \defic(\A)=\card(\clengths_{\A})\end{equation}
\end{proposition}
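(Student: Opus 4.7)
The plan is to exhibit an explicit complement of $\RS(\A)$ in $\complex^{(\omega)}$ whose dimension is manifestly $\card(\clengths_{\A})$. Let $\QHF$ be a QHF of $\A$ with non-zero rows indexed by $J$ and lengths $\lengths_{\A}=\{\mu_i : i\in J\}$, and define
\[
V=\Span\bigl(\e_n : n\in \clengths_{\A}\bigr)\subseteq \complex^{(\omega)}.
\]
I will show that $\complex^{(\omega)}=\RS(\A)\oplus V$; since $\{\e_n\}_{n\in\clengths_{\A}}$ is linearly independent (these are distinct canonical basis vectors), this immediately yields $\defic(\A)=\dim V=\card(\clengths_{\A})$.

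For the intersection, I would first observe that any non-zero $v\in V$ has the form $v=\sum_{n\in F}c_n\e_n$ with $F\subseteq \clengths_{\A}$ finite and all $c_n\neq 0$, so $\length(v)=\max F\in\clengths_{\A}$. On the other hand, by the discussion following Proposition~\ref{full-length basis} (the non-zero rows of $\QHF$ form a full-length basis of $\RS(\QHF)=\RS(\A)$), every non-zero element of $\RS(\A)$ has length in $\lengths_{\A}$. Since $\lengths_{\A}\cap\clengths_{\A}=\emptyset$, this forces $\RS(\A)\cap V=\{0\}$.

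For the sum, I would argue by induction on the length of an arbitrary $y\in\complex^{(\omega)}$. If $y=\textbf{0}$ the claim is trivial. Otherwise, let $\ell=\length(y)\geq 0$ and assume the statement holds for all vectors of smaller length. Two cases arise. If $\ell\in\lengths_{\A}$, there is a unique non-zero row $\QHF_{j}$ of $\QHF$ with $\length(\QHF_{j})=\ell$; subtracting an appropriate scalar multiple of $\QHF_{j}$ from $y$ annihilates the coordinate in position $\ell$ and, since $\QHF_{j}$ is zero in positions $>\ell$, produces a vector of length $<\ell$. If $\ell\in\clengths_{\A}$, then subtracting $y_{\ell}\e_{\ell}\in V$ from $y$ also produces a vector of length $<\ell$. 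By the inductive hypothesis, the remainder lies in $\RS(\A)+V$, hence so does $y$. Since $\complex^{(\omega)}$ consists precisely of sequences of finite length, the induction terminates and $\complex^{(\omega)}=\RS(\A)+V$.

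The only real subtlety is keeping the length bookkeeping honest: one must be sure the subtraction genuinely lowers the length (which uses the lower-echelon property of $\QHF$ in the first case and the definition of $\e_{\ell}$ in the second) and that the reduction process terminates — both of which are automatic because every element of $\complex^{(\omega)}$ has finite length. Combining the direct sum decomposition with the linear independence of $\{\e_n\}_{n\in\clengths_{\A}}$ gives $\defic(\A)=\card(\clengths_{\A})$, as required.
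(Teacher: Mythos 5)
Your proof is correct, and it arrives at exactly the same decomposition the paper uses, namely $\complex^{(\omega)}=\RS(\A)\oplus\Span(\e_s)_{s\in \clengths_{\A}}$. The difference is purely in how that decomposition is certified. The paper interleaves the non-zero rows of $\QHF$ with the canonical vectors $\e_s$, $s\in\clengths_{\A}$, into a single sequence $(\varphi_n)_{n\in\omega}$ whose length function is injective and which is full-length for $\complex^{(\omega)}$, and then invokes Proposition~\ref{full-length basis} once to conclude that this merged family is a Hamel basis; both spanning and independence come for free from that lemma (independence there rests on a Casoratian/triangularity argument). You instead verify the two halves by hand: your spanning step is precisely the descent-on-length argument that lives inside the proof of Proposition~\ref{full-length basis}, unfolded and adapted to choose between a row of $\QHF$ and a vector $\e_\ell$ according to whether $\ell\in\lengths_{\A}$; your directness step, via the observation that every non-zero element of $V$ has length in $\clengths_{\A}$ while every non-zero element of $\RS(\A)$ has length in $\lengths_{\A}$, is arguably cleaner and more elementary than routing through the determinant argument. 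What the paper's version buys is brevity and reuse of an already-proved lemma; what yours buys is self-containment and a transparent reason why the sum is direct. One small point of hygiene: your existence claim that every $\ell\in\lengths_{\A}$ is the length of some non-zero row of $\QHF$ is exactly the full-length-basis property recorded in the discussion after Proposition~\ref{full-length basis} (and restated just before Proposition~\ref{deficiency}), so you are implicitly leaning on the same prior result the paper does — you have just moved the citation from the basis claim to the identification $\lengths_{\A}=\{\mu_i\}$.
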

\begin{proof} Let $\{\QHF_j\}_{j\in J}$ be the sequence of non-zero rows of $\QHF$ and $(\e_n)_{n\in\omega}$ be the canonical basis  of $\complex^{(\omega)}$.
The sequence$(\varphi_{n})_{n\in\omega}$ is defined as follows:
\[\varphi_{n}=\left\{\begin{array}{lll} \QHF_j      & {\rm if} \ \length(\QHF_j)=n   \\                                    &               & n\in\omega \\
\e_n  & {\rm otherwise} \end{array}\right.  \]
The sequence $(\varphi_{n})_{n\in\omega}$ is a full-length sequence of $\complex^{(\omega)}$ and $(\length(\varphi_{n}))_{n\in \omega}$ is injective. By proposition \ref{full-length basis}, the set $\Phi=\{\varphi_{n}: n\in\omega\}=\{\QHF_j\}_{j\in J}\cup \{\e_s\}_{s\in \clengths_{\A}}$ is a Hamel basis of $\complex^{(\omega)}$. As $\Span(\QHF_j)_{j\in J}=\RS(\QHF)=\RS(\A)$, we infer that 
\[\complex^{(\omega)}=\Span(\Phi)=\RS(\QHF)\oplus \Span(\e_s)_{s\in \clengths_{\A}}= \RS(\A)\oplus \Span(\e_s)_{s\in \clengths_{\A}}.\]
This completes the proof.
\end{proof}
Considering $\A$ as left operator, the space of homogeneous solutions of system  (\ref{MQLR}) (the right-null space of  $\A$) will be denoted as: 
\[\RNS(\A)=\{y\in \Com: \A\cdot y=\textbf{0}\}.\] 
By virtue of Eq. (\ref{equivalent-systems}), the system $\A\cdot y=\textbf{0}$ is equivalent to the system $\QHF\cdot y= \textbf{0}$, whence $\RNS(\A)=\RNS(\QHF)$. 
Let \vspace{0.02in}$\QHF_{j_i}=(h_{j_i0},h_{j_i1},...,h_{j_i\; \mu_i-1},1,0,0,...)$ be the $j_i$ non-zero row of $\QHF$ and $y$ be a homogeneous solution of Eq. (\ref{MQLR}), that is $y=(y_0,y_1,y_2,...,)^T\in \RNS(\A)$, where ``$T$" stands for transposition. 

In view of Eq. (\ref{QHF}) for every $i\in\omega$ we have:
\[ \begin{array} {ll} 
\QHF_{j_i} \cdot y &= (h_{j_i0},h_{j_i1},...,h_{j_i\mu_i-1},h_{j_i\mu_i},0,0,...)\cdot (y_0,y_1,...,y_{\mu_i},y_{\mu_i+1},...)^T \\
&= (h_{j_i 0},h_{j_i 1},...,h_{j_i\mu_i-1},h_{j_i\mu_i})\cdot (y_0,y_1,...,y_{\mu_i})^T 
 = \displaystyle \sum^{\mu_i}_{k=0} h_{j_ik} y_k. \vspace{-0.1in}
\end{array}
\]
Taking into account that $h_{j_i\mu_i}=1$ (rightmost $1$) we have:
\[\QHF_{j_i} \cdot y =0 \Longleftrightarrow  y_{\mu_i}+\sum^{\mu_i-1}_{k=0} h_{j_ik} y_k =0 \Longleftrightarrow 
y_{\mu_i}=-\sum^{\mu_i-1}_{k=0} h_{j_ik} y_k. 
\]
It follows that the general homogeneous solution of Eq. (\ref{MQLR}) is a sequence in $\Com$ of the form
\begin{equation} \label{homogeneous solution 2} \begin{array}{llll}y_{\mathcal{H}}=\!\!&(y_0, ...,y_{\mu_0-1}, -\!\!\displaystyle \sum^{\mu_0-1}_{k=0}\! h_{j_0k} y_k,&\hspace{-0.1in} y_{\mu_0+1}, ...,&\hspace{-0.15in} y_{\mu_1-1}, -\!\!\displaystyle\sum^{\mu_1-1}_{k=0} \! h_{j_ik} y_k,y_{\mu_1+1},...,\\
& &\hspace{-0.1in} y_{\mu_{i-1}+1},...,&\hspace{-0.1in}y_{\mu_i-1}, -\!\!\displaystyle\sum^{\mu_i-1}_{k=0} \! h_{j_ik} y_k,y_{\mu_i+1},\! ...)^T.
\end{array}\end{equation}
where $y_k$ in (\ref{homogeneous solution 2}) are free constants.
 

Definition\vspace{0.02in} \ref{definition-Hermite basis} (iii) entails that 
$h_{j_i\mu_{i-n}}=0$ for $1\le n\le i$. Therefore, every term in the sum $\sum^{\mu_i-1}_{k=0} h_{j_ik} y_k$ of Eq. (\ref{homogeneous solution 2}) indexed by $k\in \lengths_{\A}$ is zero. Hence, the indexing set of the free constants $y_k$ in Eq. (\ref{homogeneous solution 2}) is $\clengths_{\A}$.  In view of Eq. (\ref{deficiency of A}), the number of free constants $y_k$ in Eq. (\ref{homogeneous solution 2}) equals the deficiency of $\A$. Thus, nontrivial homogeneous solutions exist if and only if $\defic(\A)>0$, that is $\clengths_{\A}\not=\emptyset$ (or $\lengths_{\A}\subsetneqq\omega$). 
\subsection{General Solution Sequence}
\label{sec:GeneralSolution}
The components of $\textbf{k}$ (as defined at the beginning of this section) are of the form: $\textbf{k}_i=\Q_i\cdot g$. Let $y\in \Com$. Then $\QHF_w\cdot y=0$ for all $w\in W$ (recall that $W$ denotes the indexing set of zero rows of $\QHF$). Furthermore, if $y$ is a solution of Eq. (\ref{MQLR}), then
\begin{equation} \label{consistent}\textbf{k}_w=0, \ \  {\rm for \ all} \ \ w\in W.\end{equation}
Let us define the sequence (recall that $J=\{j_0<j_1<...\}$ and $J=\omega\setminus W$) 
\begin{equation} \label{particular-solution}
\begin{array} {llll}
y_{\mathcal {P}}=(0,0,...,0,\!\!\!\!\!& \textbf{k}_{j_0},0,0,...,0,\!\!\!\!\!& \textbf{k}_{j_1},0,...,0,\!\!\!\!\!&\textbf{k}_{j_i},0,0,...)^T, \\
& \uparrow                  & \uparrow                 & \uparrow        \vspace{-0.05in}\\
& \mu_0                    & \mu_1                     & \mu_i
               \end{array}  
\end{equation}
where the component $\textbf{k}_{j_i}$ of $\textbf{k}$ has the position $\mu_i\in\lengths_{\A}$ in $y_{\mathcal {P}}$. According to the definition of QHFs (Definition \ref{definition-Hermite basis} (\textit{iii})), every non-zero row $\QHF_{j_i}$ of $\QHF$, at the positions $\mu_k$ for $k\not=i$ 
has zero entries, and at the position $k=i$ has the entry $1$, we infer that $\QHF_{j_i}\cdot\ y_{\mathcal {P}}=\textbf{k}_{j_i}$ for all $i\in\omega$.  

As a consequence, $y_{\mathcal {P}}$ in (\ref{particular-solution}) is a particular solution sequence of Eq. (\ref{MQLR}). Thus, a necessary and sufficient condition for the Eq. (\ref{MQLR}) to be consistent (it has a solution) is that Eq. (\ref{consistent}) holds.
The general solution $y_{\mathcal {G}}=(y_m)^T_{m}$ of Eq. (\ref{MQLR}) is $y_{\mathcal {G}}=y_{\mathcal {P}}+y_{\mathcal{H}}$, that is
\begin{equation} \label{general solution 2}
y_{\mathcal {G}}\!=\!(y_0,\! ...,y_{\mu_0-1}, \textbf{k}_{j_0}\!\!-\!\!\!\sum^{\mu_0-1}_{k=0}\!\!  h_{j_0k} y_k, y_{\mu_0+1},\! ...,y_{\mu_i-1}, \textbf{k}_{j_i}\!\!-\!\!\!\sum^{\mu_i-1}_{k=0}\!\! h_{j_ik} y_k,y_{\mu_i+1},\! ...)^T, 
\end{equation}
where the $y_k$s are free constants, while $h_{j_ik}$s and $\textbf{k}_{j_i}$s are all constructed by the infinite Gauss-Jordan elimination algorithm.
\section{Linear Difference Equations with Variable Coefficients\\ of Regular Order}
\label{sec:DifferenceEquations}
The general form of a linear difference equation with variable coefficients is given by
\begin{equation} \label{QLR}
a_{n,N+n} y_{N+n}+a_{n,N+n-1} y_{N+n-1}+...+a_{n,1}y_{1}+a_{n,0}y_{0}=g_n,\  \ n\ge N,
\end{equation}
where $N$ is a non-negative fixed integer and $a_{n,i}, g_n$ are arbitrary complex valued functions.

The sequence of equations in (\ref{QLR}) can be written as an infinite linear system with the  following row-finite coefficient matrix:
\begin{equation} \label{CMQLR}
\A\!\!=\!\!\left(\!\!\!\! \begin{array}{cccccccccccc}
 a_{0,0} \!\! &\!\!\!  a_{0,1}  \!\!   &\!\!\! ... \!\!\! &\!\!  a_{0, N-1}\!\!    &\!\!\!   a_{0, N}     \! &\!\!\!  0  \!\!  &\!\!\!  ... \!\!     & \!\!\!   0   \!\!\!\!         &\!\!  0\!\!\!\!\! &...\\
 a_{1,0} \!\! &\!\!\!  a_{1,1}  \!\!   &\!\!\! ... \!\!\!    &\!\!  a_{1, N-1}\!\!   &\!\!\!  a_{1,N} \! &\!\!\! a_{1, N+1} \!\!      &\!\!\! ... \!\!    & \!\!\!     0 \!\!\!\!          &\!\!   0 \!\!\!\!\! &... \\
 .       \!\! &\!\!\!   .            \!\!   & \!\!\! ... \!\!\!   &\!\!        .    \!\!  &\!\!\!    .      \! &\!\!\!  . \!\!   & \!\!\! ... \!\!       &\!\!\!     .       \!\!\!\!    &\!\!  . \!\!\!\!\! & ...  \vspace{-0.1in}\\
  .       \!\! &\!\!\!   .             \!\!  & \!\!\! ... \!\!\!   &\!\!       .    \!\!  &\!\!\!    .      \! &\!\!\!  . \!\! & \!\!\! ... \!\!          &\!\!\!     .    \!\!\!\!     &\!\!  . \!\!\!\!\!&...\vspace{-0.1in}\\
  .       \!\! &\!\!\!   .            \!\!  &\!\!\! ... \!\!\!   &\!\!        .    \!\!  &\!\!\!    .      \! &\!\!\!  . \!\! & \!\!\! ... \!\!         &\!\!\!     . \!\!\!\!         &\!\!  . \!\!\!\!\!&...  \\
a_{n-1, 0}\!\! &\!\!\! a_{n-1, 1} \!\!  &\!\!\! ... \!\!\!   &\!\!  a_{n-1, N-1}\!\! &\!\!\! a_{n-1, N} \! &\!\!\! a_{n-1, N+1}\!\! &\!\!\! ... \!\!  &\!\!\!   a_{n-1, N+n-1}         \!\!\!\!    &\!\!   0\!\!\!\!\!& ... \\
 a_{n0}  \!\!  &\!\!\! a_{n1}  \!\! &\!\!\!  ... \!\!\! &\!\! a_{n, N-1} \!\!     &\!\!\! a_{n, N} \!\! &\!\!\!   a_{n, N+1}  \!\! &\!\!\! ... \!\!   &\!\!\! a_{n,N+n-1}   \!\!\!\!    &\!\!   a_{n,N+n}\!\!\!\!\!& ...\\
 .       \!\! &\!\!\!   .            \!\!   & \!\!\! ... \!\!\!   &\!\!        .    \!\!  &\!\!\!    .      \! &\!\!\!  . \!\!   & \!\!\! ... \!\!       &\!\!\!     .       \!\!\!\!     &\!\!   .\!\!\!\!\! & ...  \vspace{-0.1in}\\
  .       \!\! &\!\!\!   .             \!\!  & \!\!\! ... \!\!\!   &\!\!     .    \!\!  &\!\!\!    .      \! &\!\!\!  . \!\! & \!\!\! ... \!\!          &\!\!\!     .    \!\!\!\!     &\!\!   .\!\!\!\!\!& ...  \vspace{-0.1in}\\
  .       \!\! &\!\!\!   .            \!\!  &\!\!\! ... \!\!\!   &\!\!        .    \!\!  &\!\!\!    .      \! &\!\!\!  . \!\! & \!\!\! ... \!\!         &\!\!\!     . \!\!\!\!    &\!\!   .\!\!\!\!\!& ...   
\end{array}\!\!\!  \right). \end{equation}

When the row-lengths of $\A$  vary irregularly, ($a_{n,N+n}=0$\vspace{0.04in} for some $n\in \naturals$), the equation (\ref{QLR}) is a \emph{linear difference equation of irregular order}. This type of equation is treated as a row-finite system by implementing the infinite Gauss-Jordan elimination algorithm in its full extent.

On the other hand ($a_{n,N+n}\not=0$ for all $n\in \naturals$), the equation (\ref{QLR}) is a linear difference equation of regular order (RO-LDEVC), which contains the following types of recurrences:
\begin{itemize}
\item If $a_{n,N+n}\not=0$ for all $n\in \naturals$, $a_{n,n}\not=0$ for some $n\in \naturals$ and $a_{n,i}=0$ for all $i,n\in \naturals$ such that $0\le i<n$, the equation (\ref{QLR}) is a \emph{linear difference equation with variable coefficients of order $N$} (see subsection \ref{sec:NOrderLinearDifferenceEquationWithVariableCoefficients}). 
\item If $N=0$ and $a_{n,n}\not=0$ for all $n\in \naturals$, the equation (\ref{QLR}) represents the commonly occurred linear difference equation, named by Mallik (~\cite{Ma:Ex}, 1998),  \emph{linear difference equation of unbounded order}. 
\item If $a_{n,N+n}\not=0$ for all $n\in \naturals$, $a_{n,0}\not=0$ for some $n\in \naturals$, the equation (\ref{QLR}) will be referred to as \emph{ascending order linear difference equation of index} $N$. 
\end{itemize}

Equations of regular order yield  row-finite coefficient matrices of the following types:
\begin{itemize}
\item Equations of constant order ($N$) are associated with infinite band matrices (row and column finite) of bandwidth $N+1$. 

\item Equations of unbounded order are associated with nonsingular lower triangular matrices. 

\item Equations of ascending order are associated with row-finite matrices (but not in general column finite). This is the most complete form amongst RO-LDEVCs.
\end{itemize}
 
All RO-LDEVCs, are associated with coefficient matrices in lower echelon form (see section \ref{sec:DifferenceEquations}) with zero left-nullity. Therefore the infinite Gaussian elimination is solely implemented, as illustrated in example \ref{sec:Example1}. The algorithm results in a row-finite matrix $\QHF$, the unique Hermite form (HF) of $\A$ (or LRREF of $\A$).
\subsection{Linear Difference Equation with Variable Coefficients of\\ Ascending Order: General Solution Sequence}
\label{sec:TheNonhogeneousSolution}
The results of the previous section are applied hereby to construct the general solution sequence of a linear difference equation of ascending order. 

As $\A$ is in lower row echelon form, the rows of $\A$ are of strictly increasing length 
\[\mu_0=N, \mu_1=N+1,..., \mu_m=N+m,...,\]
and the set \emph{set of inaccessible row-lengths} of $\RS(\A)$ is $\clengths_{\A}=\{0,1,..,N-1\}$. By virtue of Eq. (\ref{deficiency of A}) the deficiency of $\A$ is $N$. The infinite Gaussian elimination gives the HF of $\A$:
\begin{equation} \label{QHF2}
\QHF=\left(\begin{array}{ccccccccc} 
h_{0,0} & h_{0,1} &...  &  h_{0, N-1}   &  1  &  0  &   0   &  0   &  ...  \\
h_{1,0} & h_{1,1} &...  &  h_{1, N-1}   &  0  &  1  &   0   &  0   & ...   \\
h_{2,0} & h_{2,1} &...  &  h_{2, N-1}   &  0  &  0  &   1   &  0   & ...   \vspace{-0.05in}\\
 .     & .      &...  &      .         &  .  &  .  &   .   &  .   & ...   \vspace{-0.1in}\\
 .     & .      &...  &      .         &  .  &  .  &   .   &  .   & ...   \vspace{-0.1in}\\ 
 .     & .      &...  &      .         &  .  &  .  &   .   &  .   & ...    
\end{array} \right).  \end{equation}
Notice that the matrix $\QHF$ in (\ref{QHF2}) is a special case of the matrix in (\ref{QHF}).

By applying the same sequence of elementary row operations to the rows of the identity matrix $\I$, as those employed for the reduction of $\A$ to $\QHF$, the resulting matrix $\Q$ is a lower triangular and nonsingular $\omega\times\omega$  matrix of the form:
\begin{equation} \label{Invertible Q}
\Q=\left(\begin{array}{ccccccccccc} 
q_{0,0} &  0      &   0   &  0   &  ...   \\
q_{1,0} &  q_{1,1} &   0   &  0   &  ... \\
q_{2,0} & q_{2,1}  &   q_{2,2}   &  0   & ...  \vspace{-0.05in}\\
 .     & .       &   .   &      & ...  \vspace{-0.1in}\\
  .     & .       &   .   &      & ... \vspace{-0.1in}\\
   .     & .       &   .   &      & ...        
\end{array} \right).  \end{equation}
Formally $\Q \cdot \A = \QHF$ and $\displaystyle q_{n,n}=\frac{1}{a_{n,N+n}}$.


According to Eq. (\ref{consistent}), on account of $W=\emptyset$, for every $g\in \complex^\infty$ the system (\ref{QLR}) is consistent. 

In view of $\QHF$ in (\ref{QHF2}), the solution sequence in (\ref{general solution 2}) takes the form
\begin{equation} \label{GSLDE}
y=(y_0,\! ...,y_{N-1}, \textbf{k}_0-\!\!\!\sum^{N-1}_{k=0}  h_{0k} y_k,  \textbf{k}_1-\!\!\!\sum^{N-1}_{k=0} h_{1k} y_k,...,\textbf{k}_i-\!\!\!\sum^{N-1}_{k=0} h_{ik} y_k,...)^T, 
\end{equation}
where $\textbf{k}_i=\Q_i\cdot g$ (recall that $\Q_i$ is the $i$ row of $\Q$) and $y_k$'s are free constants for all $k: 0\le k\le N-1$. 

The sequence in (\ref{GSLDE}) is the general solution sequence of the linear difference equation of ascending order of index $N$.
Accordingly, the initial value problem $y_k=c_k$, $0\le k\le N-1$, has a unique solution, as in the case of the $N$-order linear recurrence. 
The $y_{n+N}$ term of the solution sequence is given by
\[ y_{n+N}=\textbf{k}_n-\sum^{N-1}_{k=0} h_{nk} c_k \]
for all $n\ge 0$. 

Taking into account that  $\textbf{k}_n=(q_{n0},q_{n1},...,q_{nn})\cdot (g_0,g_1,...,g_n)^T$ we can also write:
\begin{equation} \label{unique solution 2}
y_{n+N}= \sum^{n}_{k=0} q_{nk} g_k -\sum^{N-1}_{k=0} h_{nk} c_k, \ \  n\ge 0.
\end{equation}

The infinite Gaussian elimination algorithm generates the chain of submatrices of $\QHF$ such that $n=\delta_n$ for all $n\in\naturals$, as in (\ref{Chain Order-2}), coupled with a chain of submatrices of $\Q$. Thus, the quantities $q_{nk}$ and $h_{nk}$ in (\ref{unique solution 2}) are fully determined at the $n$-step of the algorithm. 

The solution in (\ref{unique solution 2}) is expressed in terms of the $N$ initial conditions, the forcing terms $g_k$ and the quantities constructed by the algorithm. 
\subsection{Linear Difference Equation with Variable Coefficients of Order $N$}
\label{sec:NOrderLinearDifferenceEquationWithVariableCoefficients}
The $N$th order linear difference equation with variable coefficients is given by
\begin{equation} \label{NODE}
a_{n,n+N} y_{n+N}+a_{n,n+N-1} y_{n+N-1}+...+a_{n,n}y_{n}=g_n, \ \ n\in\naturals. 
\end{equation}
Equation (\ref{NODE}) takes the row and column finite system form:
\begin{equation} \label{MHODE} 
\left(\begin{array}{ccccccccc}
\!\!\! a_{00} &\!\!\!  a_{01} &\!\!\!\! ...  &\!\!\!  a_{0N-1} &  a_{0N}    &\!\!  0     &\!\!\!   0     &\!\!\!\!  0  &\!\!\! ... \!\!\! \\
\!\! 0      &\!\!\!  a_{11} &\!\!\!\!  ...  &\!\!\!  a_{1N-1}  &  a_{1N}     &\!\! a_{1N+1}   &\!\!\!   0     &\!\!\!\!  0  &\!\!\! ... \!\!\! \\
\!\! 0      &\!\!\!   0      &\!\!\!\!  ...  &\!\!\!  a_{2N-1} & a_{2N} &\!\! a_{2N+1}&\!\!\! a_{2N+2}  &\!\!\!\!  0  &\!\!\! ... \!\!\! \vspace{-0.05in}\\
\!\! .      &\!\!\!  .      &\!\!\!\!  ...  &\!\!\!  .      &   .        &\!\!  .       &\!\!\!   .     &\!\!\!\!  .  &\!\!\! ... \!\!\! \vspace{-0.1in}\\
\!\! .      &\!\!\!  .      &\!\!\!\!  ...  &\!\!\!  .      &   .        &\!\!  .       &\!\!\!   .     &\!\!\!\!  . &\!\!\! ... \!\!\! \vspace{-0.1in}\\
\!\! .      &\!\!\!  .      &\!\!\!\!  ... &\!\!\!  .      &   .        &\!\!  .       &\!\!\!   .     &\!\!\!\!  .  &\!\!\!... \!\!\! 
\end{array} \right) \!\!
 \left(\begin{array}{c}
 y_{0}  \\
 y_{1} \vspace{-0.05in}\\
  .   \vspace{-0.1in}\\
  .   \vspace{-0.1in}\\
  . \\
  y_{N-1}\\
  y_N \\
  y_{N+1} \\
  y_{N+2} \vspace{-0.05in}\\
  .   \vspace{-0.1in}\\
  .   \vspace{-0.1in}\\
  .   
 \end{array} \right)
\!\! =\!\! \left(\begin{array}{c}
 g_0 \\
 g_1 \\
 g_2 \\
  .   \vspace{-0.1in}\\
  .   \vspace{-0.1in}\\
  .  \end{array} \right).
\end{equation}

The results of subsection \ref{sec:TheNonhogeneousSolution} are directly applicable to Eq. (\ref{NODE}). 

Although the coefficient matrix of Eq. (\ref{NODE}) is row and column finite, its Hermite form is row-finite, given by (\ref{QHF2}). 
The general solution sequence of Eq. (\ref{NODE}) is given by (\ref{GSLDE}).
\section{Fundamental Set of Solutions: Extensions and Construction}
\label{sec:AFundamentalSetOfHomogeneousSolutions}
The fundamental set of solutions is a major tool for studying the global stability of dynamic systems. In the case of difference equations of $N$th order with variable coefficients, the fundamental solution set consists of $N$ linearly independent homogeneous solutions, thus forming a finite basis of the space of homogeneous solutions. 
\subsection{Linear Difference Equations of Regular Order}
\label{sec:LinearRecurrence}
In this subsection, the fundamental solution set of a RO-LDEVC is constructed. Setting $\textbf{k}_{0}=\textbf{k}_{1}=\textbf{k}_{2}=...=0$ in (\ref{GSLDE}), we get the general homogeneous solution of the ascending order linear difference equation of index $N$: 
\begin{equation} \label{HOGHS} \displaystyle(y_0,\! ...,y_{N-1}, -\!\!\!\sum^{N-1}_{k=0}  h_{0k} y_k,  -\!\!\!\sum^{N-1}_{k=0} h_{1k} y_k,..., -\!\!\!\sum^{N-1}_{k=0} h_{ik} y_k,...)^T.
\end{equation}
The homogeneous solution, $\mbox{\boldmath$\mbox{\boldmath$\xi$}$}^{(i)}$, is obtained by setting in (\ref{HOGHS}): $y_i=1$ and $y_j=0$ for all $j\in \{0,1,...,N-1\}$ with $j\not=i$. Thus,
\begin{equation} \label{FSHS}
\begin{array} {llccccl}
\mbox{\boldmath$\xi$}^{(0)}     &=(1,0,...,0, & -h_{00},  &-h_{1 0},  &... & -h_{n 0}, &...)^T \\
\mbox{\boldmath$\xi$}^{(1)}     &=(0,1,...,0, & -h_{01},  &-h_{1 1},  &... & -h_{n 1}, &...)^T \vspace{-0.05in}\\
.          &.            &.          &.          &... &  .        &... \vspace{-0.1in}\\
.          &.            &.          &.          &... &  .        &... \vspace{-0.1in}\\
.          &.            &.          &.          &... &  .        &... \\
\mbox{\boldmath$\xi$}^{(N-1)} &=(0,0,...,1, & -h_{0N-1},  &-h_{1 N-1},  &... & -h_{n N-1}, &...)^T .
\end{array} 
\end{equation}
In view of the matrix $\QHF$ in (\ref{QHF2}), the product $\QHF\cdot \mbox{\boldmath$\xi$}^{(i)}=\textbf{0}$ for all $0\le i\le N-1$, is also the direct result of $\QHF_k\cdot \mbox{\boldmath$\xi$}^{(i)}=0$ for all $k\in \naturals$. Notice that the term  $\mbox{\boldmath$\xi$}_{k}^{(i)}$ for $k\ge 0$ and $0\le i\le N-1$ in (\ref{FSHS}) is the opposite sign $(k,i)$, entry of $\QHF$, that is $\mbox{\boldmath$\xi$}_{k}^{(i)}=-h_{ki}$. 

Accordingly the solution sequences in (\ref{FSHS}) are primarily and simultaneously constructed by the infinite Gaussian elimination algorithm.
\begin{theorem} \label{fundamental solutions} The set $\mbox{\boldmath$\xi$}=\{\mbox{\boldmath$\xi$}^{(i)}\}_{0\le i\le N-1}$ is a fundamental solution set. 
\end{theorem}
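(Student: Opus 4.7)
The plan is to verify the three defining properties of a fundamental solution set: each $\mbox{\boldmath$\xi$}^{(i)}$ is a homogeneous solution, the set $\mbox{\boldmath$\xi$}$ is linearly independent, and it spans the homogeneous solution space $\RNS(\A)$. The first property is essentially built into the construction: the sequences in (\ref{FSHS}) are obtained from the general homogeneous solution (\ref{HOGHS}) by specializing the free constants. Nevertheless, I would give the direct check, recording the identity $\QHF_k\cdot \mbox{\boldmath$\xi$}^{(i)}=0$ for every $k\in\naturals$ and $0\le i\le N-1$, which follows from the fact that the $k$th component of $\QHF_k$ equals $1$, that the first $N$ coordinates of $\mbox{\boldmath$\xi$}^{(i)}$ are $\e_i$, and that the remaining coordinates are $-h_{k0},-h_{k1},\ldots$ Since $\QHF\cdot\mbox{\boldmath$\xi$}^{(i)}=\textbf{0}$ and $\A,\QHF$ are left associates (by Theorem~\ref{A is row-equivalent to H} applied with the fact that $\QHF$ is obtained by infinite Gaussian elimination from $\A$ in lower echelon form), we also have $\A\cdot\mbox{\boldmath$\xi$}^{(i)}=\textbf{0}$, hence $\mbox{\boldmath$\xi$}^{(i)}\in\RNS(\A)$.

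For linear independence, I would observe that the first $N$ components of $\mbox{\boldmath$\xi$}^{(i)}$ form the canonical basis vector $\e_i$ of $\complex^N$. Hence any finite linear combination $\sum_{i=0}^{N-1}c_i\mbox{\boldmath$\xi$}^{(i)}=\textbf{0}$ forces, by inspection of positions $0,1,\ldots,N-1$, the identity $(c_0,c_1,\ldots,c_{N-1})=\textbf{0}$.

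For spanning, I would invoke (\ref{HOGHS}), which states that every element of $\RNS(\A)$ is of the form
\[
y_{\mathcal H}=\bigl(y_0,\ldots,y_{N-1},\; -\!\!\sum_{k=0}^{N-1} h_{0k}y_k,\; -\!\!\sum_{k=0}^{N-1} h_{1k}y_k,\ldots\bigr)^T
\]
with free constants $y_0,\ldots,y_{N-1}$. Reading this coordinate-by-coordinate against (\ref{FSHS}), one sees immediately that $y_{\mathcal H}=\sum_{i=0}^{N-1}y_i\,\mbox{\boldmath$\xi$}^{(i)}$. Consequently $\RNS(\A)=\Span\{\mbox{\boldmath$\xi$}^{(0)},\ldots,\mbox{\boldmath$\xi$}^{(N-1)}\}$, which combined with linear independence shows that $\mbox{\boldmath$\xi$}$ is a Hamel basis of $\RNS(\A)$ of cardinality $N=\defic(\A)$, i.e.\ a fundamental solution set.

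There is no real obstacle here: the nontrivial content has already been absorbed into the preceding sections (quasi-Hermite form, left-association, Proposition~\ref{deficiency} and formula (\ref{GSLDE})). The mild subtlety to be careful about is consistency of indexing in (\ref{FSHS}) when writing $\mbox{\boldmath$\xi$}_k^{(i)}=-h_{ki}$ for $k\ge 0$ versus the coordinates of the first $N$ slots that carry the canonical-basis pattern; once those are aligned, linear independence and spanning both reduce to reading off the first $N$ coordinates.
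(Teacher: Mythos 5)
Your proposal is correct and follows essentially the same route as the paper: spanning is read off from the general homogeneous solution (\ref{HOGHS}) exactly as in the paper's proof, and your observation that the first $N$ coordinates of the $\mbox{\boldmath$\xi$}^{(i)}$ form the canonical basis of $\complex^N$ is precisely the paper's statement that the Casoratian $W(0)=|\I|=1\not=0$. The extra verification that each $\mbox{\boldmath$\xi$}^{(i)}$ lies in $\RNS(\A)$ via $\QHF_k\cdot\mbox{\boldmath$\xi$}^{(i)}=0$ and left-association is already recorded in the paper in the discussion immediately preceding the theorem.
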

\begin{proof}
In view of the sequence (\ref{HOGHS}), any $x\in\RNS(\A)$ can be written as:
\[x=\displaystyle(a_0,\! ...,a_{N-1}, -\!\!\!\sum^{N-1}_{k=0}  h_{0k} a_k,  -\!\!\!\sum^{N-1}_{k=0} h_{1k} a_k,..., -\!\!\!\sum^{N-1}_{k=0} h_{ik} a_k,...)^T =\displaystyle \sum^{N-1}_{k=0}\mbox{\boldmath$\xi$}^{(k)} a_k. \]
Thus, $\mbox{\boldmath$\xi$}$ spans $\RNS(\A)$. The linear independence follows from the fact that the Casoratian $W(0)$ of $\mbox{\boldmath$\xi$}$ is $W(0)=|\I|=1\not=0$. Therefore, $\mbox{\boldmath$\xi$}$ is a Hamel basis of $\RNS(\A)$.
\end{proof} 
Theorem \ref{fundamental solutions} entails the following statement:
\begin{corollary} \label{Extension of Fundamental Theorem} Let $N$ be the index of a linear difference equation of regular order. Then
\[\defic(\A)=\dim(\RNS(\A))=N.\]
\end{corollary}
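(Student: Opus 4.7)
The plan is to assemble the corollary directly from two results already established in the excerpt: Proposition~\ref{deficiency}, which identifies $\defic(\A)$ with the cardinality of $\clengths_\A$, and Theorem~\ref{fundamental solutions}, which supplies an explicit basis of $\RNS(\A)$ of cardinality $N$. No new machinery is needed; the task is simply to identify $\lengths_\A$ and $\clengths_\A$ for a RO-LDEVC and read off the two dimensions.

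First I would record the structural fact that since the equation is of regular order, $a_{n,N+n}\neq 0$ for every $n\in\naturals$. This means the row $\A_n$ of the matrix in (\ref{CMQLR}) has length exactly $\length(\A_n)=N+n$. Consequently $\A$ is in lower row-echelon form with row-length sequence $(\mu_n)_{n\in\omega}=(N,N+1,N+2,\ldots)$, and, as already noted in subsection~\ref{sec:TheNonhogeneousSolution}, $\lengths_\A=\{N,N+1,N+2,\ldots\}$. Hence $\clengths_\A=\omega\setminus\lengths_\A=\{0,1,\ldots,N-1\}$, which is a finite set of cardinality $N$. Proposition~\ref{deficiency} then immediately yields
\[
\defic(\A)=\card(\clengths_\A)=N.
\]

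Second I would invoke Theorem~\ref{fundamental solutions}. It asserts that the family $\mbox{\boldmath$\xi$}=\{\mbox{\boldmath$\xi$}^{(i)}\}_{0\le i\le N-1}$ displayed in (\ref{FSHS}) is a Hamel basis of $\RNS(\A)$. Since $\mbox{\boldmath$\xi$}$ has exactly $N$ elements, $\dim(\RNS(\A))=N$. Combining the two computations gives $\defic(\A)=\dim(\RNS(\A))=N$, as required.

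There is no real obstacle here; the only subtlety worth flagging is that one must use the fact the equation is of \emph{regular} order (so that no row of $\A$ is lost and the row-length sequence is exactly $(N+n)_{n\in\omega}$), since in the irregular case $\lengths_\A$ could be a proper subset of $\{N,N+1,\ldots\}$ and both $\defic(\A)$ and $\dim(\RNS(\A))$ could exceed $N$.
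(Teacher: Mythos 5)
Your proposal is correct and follows essentially the same route as the paper: the paper presents this corollary as an immediate consequence of Theorem~\ref{fundamental solutions} (which gives the $N$-element Hamel basis of $\RNS(\A)$), with the deficiency computation $\clengths_{\A}=\{0,1,\ldots,N-1\}$ and $\defic(\A)=N$ already carried out via Proposition~\ref{deficiency} in subsection~\ref{sec:TheNonhogeneousSolution}. Your explicit remark about why regularity of the order is needed is a useful clarification but not a departure from the paper's argument.
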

This result extends the fundamental theorem of linear difference equations to cover the case of linear difference equations of ascending order with variable coefficients. 
\subsection{Row-Finite Systems}
\label{sec:LinearQuasi-Recurrencies}
In the present subsection, the notion of fundamental solution set is further extended to cover the general case of row-finite systems and therefore the case of linear difference equations of irregular order. The complete Gauss-Jordan elimination is applied to the coefficient matrix of (\ref{MQLR}) giving the QHF, $\QHF$, of $\A$. 
Let $x=(x_n)^T_{n\in\omega}$ be an arbitrary element of $\RNS(\A)$, that is a homogeneous solution of Eq. (\ref{MQLR}). According to the solution sequence in (\ref{homogeneous solution 2}), $x$ must be in the form: 
\begin{equation} \label{homogeneous solution 3} \begin{array}{llll}x=&(x_0, ...,x_{\mu_0-1}, -\!\!\displaystyle \sum^{\mu_0-1}_{k=0}\! h_{j_0k} x_k,&\hspace{-0.1in} x_{\mu_0+1}, ...,&\hspace{-0.15in} x_{\mu_1-1}, -\!\!\displaystyle\sum^{\mu_1-1}_{k=0} \! h_{j_ik} x_k,x_{\mu_1+1},...,\\
& &\hspace{-0.1in} x_{\mu_{i-1}+1},...,&\hspace{-0.1in}x_{\mu_i-1}, -\!\!\displaystyle\sum^{\mu_i-1}_{k=0} \! h_{j_ik} x_k,x_{\mu_i+1},\! ...)^T.
\end{array}\end{equation}
For each $i\in \clengths_{\A}$ define the sequence $\mbox{\boldmath$\xi$}^{(i)}$ in $\RNS(\A)$, by setting in Eq. (\ref{homogeneous solution 3}) $x_i=1$ and $x_j=0$ for all $j\in \clengths_{\A}$ with $j\not=i$, that is
\begin{equation} \label{sequence of fundamental solutions 1}
\begin{array} {lllll}\!\!\!\!\!\!
\mbox{\boldmath$\xi$}^{(i)}\!=\!(0,0,...,0,\!\!\!\!\!&1,0,...,0, \!\!\!\!\! & -h_{j_ni}, 0,...,0,\!\!\!\!\!&-h_{j_{n+1}i},0,...,0,\!\!\!\!\! &-h_{j_{n+m}i},0,...)^T  \\ 
& \uparrow           &\,\,\,\,\,\,\uparrow       &\,\,\,\,\,\,\uparrow        &\,\,\,\,\,\,\uparrow   \\
 & i                  & \,\,\,\,\,\,\mu_n        &\,\,\,\,\,\, \mu_{n+1}         &\,\,\,\,\,\, \mu_{n+m}
               \end{array}  
\end{equation}
whenever $\mu_{n-1}<i<\mu_n$ or $0\le i< \mu_0$. 
According to (\ref{sequence of fundamental solutions 1}), each term $h_{j_ni}$ in $\mbox{\boldmath$\xi$}^{(i)}$ is an entry of the $i$-column of $\QHF$, which belongs to the $j_n$ non-zero row of $\QHF$ and has the position $\mu_n$ in $\mbox{\boldmath$\xi$}^{(i)}$. 

For example, if $0\le i <\mu_0$, then:
\[\begin{array} {lllll}
\mbox{\boldmath$\xi$}^{(i)}=(0,0,...,0,\!\!&1,0,..., \!\!\!\!\! & 0,-h_{j_0i}, 0,...,0,\!\!\!\!\!&-h_{j_1i},0,...,0,\!\!\!\!\! &-h_{j_n i},0,...)^T . \\ 
& \uparrow  &\hspace{0.3in}\uparrow  &\hspace{0.2in}\uparrow &\hspace{0.2in} \uparrow   \\
& i                 & \hspace{0.3in}\mu_0        &\hspace{0.2in} \mu_{1}         &\hspace{0.2in} \mu_{n}
               \end{array} 
\]

Next, define the indexed set $\mbox{\boldmath$\xi$}=\{\mbox{\boldmath$\xi$}^{(i)}\}_{i\in \clengths_{\A}}$. We conclude that every sequence $\mbox{\boldmath$\xi$}^{(i)}$ in $\mbox{\boldmath$\xi$}$ is constructed by the infinite Gauss-Jordan elimination algorithm along with the construction of matrix $\QHF$. 
\begin{theorem} \label{Finite basis} If $\defic(\A)<\infty$, then the set $\mbox{\boldmath$\xi$}$ is a finite basis of the homogeneous solution space $\RNS(\A)$ of \emph{Eq. (\ref{MQLR})}.
\end{theorem}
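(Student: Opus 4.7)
The plan is to verify in turn that $\mbox{\boldmath$\xi$}$ is (a) finite, (b) contained in $\RNS(\A)$, (c) a spanning set, and (d) linearly independent. Finiteness is immediate: by Proposition \ref{deficiency}, $\defic(\A)=\card(\clengths_{\A})$, so the hypothesis $\defic(\A)<\infty$ forces the index set $\clengths_{\A}$ to be finite, and hence $\mbox{\boldmath$\xi$}=\{\mbox{\boldmath$\xi$}^{(i)}\}_{i\in\clengths_{\A}}$ is finite. Membership is also cost-free: each $\mbox{\boldmath$\xi$}^{(i)}$ is obtained by assigning specific numerical values to the free parameters in the general homogeneous solution sequence (\ref{homogeneous solution 3}), which itself was derived from the equivalence $\A\cdot y=\mathbf{0}\Leftrightarrow \QHF\cdot y=\mathbf{0}$; therefore each $\mbox{\boldmath$\xi$}^{(i)}\in\RNS(\A)$.

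For spanning, I would take an arbitrary $x\in\RNS(\A)$ written in the form (\ref{homogeneous solution 3}) and show that $x=\sum_{i\in\clengths_{\A}}x_i\,\mbox{\boldmath$\xi$}^{(i)}$ by comparing both sides componentwise. At every free position $k\in\clengths_{\A}$ the equality is clear, since by construction $\mbox{\boldmath$\xi$}^{(i)}_k=\delta_{ik}$ for $i,k\in\clengths_{\A}$. At a constrained position $\mu_n\in\lengths_{\A}$ the left-hand side equals $-\sum_{k=0}^{\mu_n-1}h_{j_n k}x_k$; invoking property (\textit{iii}) of Definition \ref{definition-Hermite basis}, the coefficients $h_{j_n \mu_m}$ vanish for all $m\neq n$, so the sum collapses to $-\sum_{k\in\clengths_{\A},\,k<\mu_n}h_{j_n k}x_k$. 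The right-hand side at position $\mu_n$ reads $\sum_{i\in\clengths_{\A}}x_i\mbox{\boldmath$\xi$}^{(i)}_{\mu_n}$, and from (\ref{sequence of fundamental solutions 1}) we have $\mbox{\boldmath$\xi$}^{(i)}_{\mu_n}=-h_{j_n i}$ when $i<\mu_n$ and $0$ when $i>\mu_n$; the two sums therefore match.

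Linear independence is the easiest step: from any relation $\sum_{i\in\clengths_{\A}}c_i\mbox{\boldmath$\xi$}^{(i)}=\mathbf{0}$, reading off the component at position $i_0\in\clengths_{\A}$ yields $c_{i_0}=0$, since $\mbox{\boldmath$\xi$}^{(i)}_{i_0}=\delta_{i\,i_0}$ on $\clengths_{\A}$. Equivalently, the submatrix of $(\mbox{\boldmath$\xi$}^{(i)})_{i\in\clengths_{\A}}$ obtained by restricting to the finite set of rows indexed by $\clengths_{\A}$ is the identity matrix, whose Casoratian-type determinant is $1\neq 0$, mirroring the argument used in Theorem \ref{fundamental solutions}.

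The only genuinely delicate step is the spanning verification at the positions $\mu_n$: one must ensure that the identities drawn from (\ref{homogeneous solution 3}) and (\ref{sequence of fundamental solutions 1}) truly line up, and the crux is the cancellation of all $h_{j_n k}$ with $k\in\lengths_{\A}$ via Definition \ref{definition-Hermite basis} (\textit{iii}). Once that observation is recorded, the remainder of the argument is routine bookkeeping.
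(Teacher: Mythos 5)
Your proof is correct and follows essentially the same route as the paper: finiteness of $\clengths_{\A}$ via Proposition \ref{deficiency}, spanning by reading the decomposition $x=\sum_{i\in\clengths_{\A}}x_i\,\mbox{\boldmath$\xi$}^{(i)}$ off the general homogeneous solution (\ref{homogeneous solution 3}), and linear independence from the Kronecker-delta structure of the $\mbox{\boldmath$\xi$}^{(i)}$ on the positions in $\clengths_{\A}$. The only cosmetic difference is that the paper packages the independence step as a Casoratian of a triangular augmented matrix, whereas you extract the coefficients coordinatewise (and you work out the spanning verification at the constrained positions $\mu_n$ in more detail than the paper does), but the underlying argument is identical.
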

\begin{proof} By virtue of proposition \ref{deficiency of A}, the hypothesis entails that the set $\clengths_{\A}$ is finite \[\clengths_{\A}\!=\!\{s_0,s_1,s_2,...,s_m\}\] 
such that $s_k<s_{k+1}$. Also $\mbox{\boldmath$\xi$}=\{\mbox{\boldmath$\xi$}^{(s_k)}\}_{0\le k\le m}$. 
If $x\in\RNS(\A)$, then it follows from (\ref{homogeneous solution 3}) that  $x=\sum_{k=0}^{m}\mbox{\boldmath$\xi$}^{(s_k)}x_{s_k}$\vspace{0.05in}, whence $\mbox{\boldmath$\xi$}$ spans $\RNS(\A)$.
It remains to show that $\mbox{\boldmath$\xi$}$ is linearly independent. The finite sequence $\beta_0,\beta_1,...,\beta_{s_m}$ in $\complex^{(\omega)}$ is defined as follows
\[\beta_{n}=\left\{\begin{array}{lll} \e^T_n      & {\rm if} \ n\not=s_k   \\                                    &          & 0\le n\le s_m, \\
\mbox{\boldmath$\xi$}^{(n)} & {\rm if} \ n=s_k \end{array}\right.  \]
where $\e^T_n$ is the transpose of $\e_n$ (see section \ref{sec:TheInfiniteGauss-JordanAlgorithmAndTheQuasi-HermiteForm}).

Call $\B$ the $\omega\times(s_m+1)$ matrix with columns $\beta_n$, $0\le n\le s_m$. Also call $\B_{s_m}$ the submatrix of $\B$ consisting of the first $s_{m+1}$ rows of $\B$. Evidently $\B_{s_m}$ is a nonsingular $(s_m+1)\times(s_m+1)$ matrix, since it is a lower triangular with non-zero diagonal entries. Thus the Casoratian $W(0)$ of $\B$ is non-zero. It follows that the columns of $\B$ are linearly independent. Thus, $\mbox{\boldmath$\xi$}$ is linearly independent, being a subset of a linearly independent set, and the assertion follows.
\end{proof}
Let $q=(q_0,q_1,q_2,...)^T$ and $p=(p_0,p_1,p_2,...)^T$ be elements of $\complex^\infty$. The sequence space $\complex^\infty$ equipped with the metric 
\begin{equation} \label{metric}
\varrho(q,p)=\sum_{i=0}^{\infty} \frac{1}{2^i}\frac{\left|q_i-p_i\right|}{1+\left|q_i-p_i\right|}
\end{equation}
is customarily denoted by $\mathfrak{s}$. The space $\mathfrak{s}$ is a locally convex, complete and metrizable (but not normable) topological vector space (Fr\'{e}chet space).
The canonical basis $(\e_n)_{n\in \omega}$ of $\complex^{(\omega)}$ is a Schauder (or countable) basis of $\mathfrak{s}$, which turns $\mathfrak{s}$ into a separable space. The convergence in $\mathfrak{s}$ is the coordinatewise convergence. 
\begin{theorem} \label{Schauder basis} If $\defic(\A)=\infty$, then the set $\mbox{\boldmath$\xi$}$ is a Schauder basis of the homogeneous solution space $\RNS(\A)$ of \emph{Eq. (\ref{MQLR})}.
\end{theorem}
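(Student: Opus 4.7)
The plan is to mimic the proof of Theorem \ref{Finite basis}, now viewing $\RNS(\A)$ as a subspace of the Fr\'echet space $\mathfrak{s}$ in which, as stated just before the theorem, convergence is coordinatewise. Since $\defic(\A)=\infty$, Proposition \ref{deficiency} makes $\clengths_{\A}=\{s_0<s_1<s_2<\ldots\}$ countably infinite, so $\mbox{\boldmath$\xi$}=\{\mbox{\boldmath$\xi$}^{(s_k)}\}_{k\in\omega}$ is itself countable. For an arbitrary $x\in\RNS(\A)$ I would set $c_k:=x_{s_k}$ and prove that the formal series $\sum_{k=0}^{\infty} c_k\mbox{\boldmath$\xi$}^{(s_k)}$ converges to $x$ in $\mathfrak{s}$ and that the representation is unique.

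The crux is that at every fixed coordinate $n\in\omega$ only finitely many of the $\mbox{\boldmath$\xi$}^{(s_k)}$ have a non-zero $n$-th entry, so the partial sums $S_N:=\sum_{k=0}^{N} c_k\mbox{\boldmath$\xi$}^{(s_k)}$ stabilize pointwise and no genuine limit estimate is required. If $n=s_j\in\clengths_{\A}$, then by construction $(\mbox{\boldmath$\xi$}^{(s_k)})_n=\delta_{jk}$, so $(S_N)_n=c_j$ as soon as $N\ge j$. If instead $n=\mu_i\in\lengths_{\A}$, formula (\ref{sequence of fundamental solutions 1}) gives $(\mbox{\boldmath$\xi$}^{(s_k)})_{\mu_i}=-h_{j_i s_k}$, which vanishes whenever $s_k>\mu_i$ because $\length(\QHF_{j_i})=\mu_i$; thus only the finitely many indices with $s_k<\mu_i$ contribute, and $(S_N)_{\mu_i}$ becomes constant once $N$ exceeds the largest such index.

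Matching the two cases against representation (\ref{homogeneous solution 3}) of $x$ yields equality coordinatewise. At an inaccessible coordinate $s_j$ this is immediate from $x_{s_j}=c_j$. At an accessible coordinate $\mu_i$, (\ref{homogeneous solution 3}) gives $x_{\mu_i}=-\sum_{k=0}^{\mu_i-1} h_{j_i k}\, x_k$; postulate (\textit{iii}) of Definition \ref{definition-Hermite basis} kills the summands with $k\in\lengths_{\A}$, reducing the sum to $\sum_{s_k<\mu_i}(-h_{j_i s_k})c_k$, which is exactly the stabilized value of $(S_N)_{\mu_i}$ above. Since convergence in $\mathfrak{s}$ is coordinatewise and each $\mbox{\boldmath$\xi$}^{(s_k)}$ already lies in $\RNS(\A)$, we obtain $S_N\to x$ in the Fr\'echet topology, with limit in $\RNS(\A)$.

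Uniqueness is the easiest step: if $\sum_{k=0}^{\infty} c_k\mbox{\boldmath$\xi$}^{(s_k)}=0$ in $\mathfrak{s}$, evaluating at coordinate $s_j$ forces $c_j=0$ for every $j$. The main obstacle I anticipate is purely bookkeeping: separating the entries of $\mbox{\boldmath$\xi$}^{(s_k)}$ lying in columns indexed by $\clengths_{\A}$ from those in $\lengths_{\A}$, and invoking Definition \ref{definition-Hermite basis}(\textit{iii}) at the right moment so that the accessible-coordinate comparison collapses to the finite sum over $\{s_k<\mu_i\}$. Once this indexing is clean, no estimate involving the metric (\ref{metric}) is needed, because coordinatewise stabilization already implies convergence in the Fr\'echet topology of $\mathfrak{s}$.
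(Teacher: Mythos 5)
Your proposal is correct and follows essentially the same route as the paper: both form the partial sums of $\sum_k x_{s_k}\mbox{\boldmath$\xi$}^{(s_k)}$, match coordinates against (\ref{homogeneous solution 3}) using Definition \ref{definition-Hermite basis}(\textit{iii}), and conclude via coordinatewise convergence in $\mathfrak{s}$. The only differences are cosmetic: the paper spells out the explicit estimate $\varrho(\phi^{(n)},x)<2^{-n}$ where you invoke coordinatewise convergence directly, and your uniqueness step (evaluating the series at an inaccessible coordinate $s_j$) is a more direct version of the paper's argument, which instead builds a second solution $z$ and appeals to uniqueness of limits in metric spaces.
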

\begin{proof} As $\card(\clengths_{\A})=\defic(\A)=\infty$, we write $\clengths_{\A}=\{s_0,s_1,s_2,...\}$ with $s_i<s_{i+1}$.  Also, $\mbox{\boldmath$\xi$}=\{\mbox{\boldmath$\xi$}^{(s_k)}\}_{k\in\omega}$. Let $x\in\RNS(\A)$. Then $x=(x_n)^T_{n\in\omega}$ is given by (\ref{homogeneous solution 3}). Next define the sequence $\phi=(\phi^{(n)})_{n\in\omega}$ by $\displaystyle\phi^{(n)}=\sum_{k=0}^{n}\mbox{\boldmath$\xi$}^{(s_k)}x_{s_k}$.
Let $m\in\omega$ such that $0\le m\le n$. Then, either $m=s_i\in \clengths_{\A}$ or $m=\mu_i\in\lengths_{\A}$ for some $i\in \omega$. If $m=s_i$, it follows from  (\ref{homogeneous solution 3}), that the $m$ coordinate of $\phi^{(n)}$ is $\phi^{(n)}_m=x_m$. If $m=\mu_i$, on account of $\mu_i=m\le n$, (\ref{homogeneous solution 3}) entails that the $m$ coordinate of $\phi^{(n)}$ is $\phi^{(n)}_m=-\sum^{\mu_i-1}_{k=0} h_{j_ik} x_k$. Thus, $\phi^{(n)}_m=x_m$ for all $m: 0\le m\le n$. Taking into account that $\phi_m^{(n)}-x_m=0$ for $m\le n$, we have:
\[\varrho(\phi^{(n)},x)=\sum_{i=n+1}^{\infty} \frac{1}{2^i}\frac{\left|\phi_i^{(n)}-x_i\right|}{1+\left|\phi_i^{(n)}-x_i\right|}<\sum_{i=n+1}^{\infty}\frac{1}{2^i}=\frac{1}{2^n}. \] 
It follows from $\varrho(\phi^{(n)},x)\longrightarrow 0$, as $n\longrightarrow \infty$, that 
each $x\in\RNS(\A)$ is decomposable as a series $\displaystyle x=\sum_{k=0}^{\infty}\mbox{\boldmath$\xi$}^{(s_k)}x_{s_k}$. 

It remains to show that the sequence $(x_{s_k})$ is unique. For this purpose, assume that $\displaystyle x=\sum_{k=0}^{\infty}\mbox{\boldmath$\xi$}^{(s_k)}z_{s_k}$. The $n$th term of the sequence of partial sums is  $\displaystyle\tau^{(n)}=\sum_{k=0}^{n}\mbox{\boldmath$\xi$}^{(s_k)}z_{s_k}$. The sequence $(\tau^{(n)})_{n\in\omega}$ converges to $x$, that is $\varrho(\tau^{(n)},x)\longrightarrow 0$, as $n\longrightarrow \infty$. Define the sequence $z=(z_n)_{n\in\omega}$ as follows:
\[z_n=\left\{\begin{array}{ccl} z_{s_k}  & {\rm if} & n=s_k \vspace{0.1in}\\
 \displaystyle -\sum^{\mu_k-1}_{m=0} h_{j_km} z_m  &{\rm if}  & n=\mu_k.
                   \end{array}\right.
\]
In view of Eq. (\ref{homogeneous solution 2}), it follows that $z\in\RNS(\A)$. Repeating the above arguments we infer that $\varrho(\tau^{(n)},z)\longrightarrow 0$, as $n\longrightarrow \infty$. The uniqueness of limits of convergent sequences in metric spaces entails that $x=z$, whence $x_{s_k}=z_{s_k}$ for all $k\in\omega$, as required. 
\end{proof}

\subsection{Classification of Row-Finite Systems and a Unified Form of their General Homogeneous Solution}
\label{sec:ClassificationOfRowFiniteSystems}
The Hamel basis of the homogeneous solution space of RO-LDEVCs is characterised as fundamental solution set. Theorem 5 shows that this characterisation can be directly extended to row-finite systems with finite deficiency. By virtue of theorem \ref{Schauder basis}, the homogeneous solution space of row-finite systems with infinite deficiency has a Schauder basis. The notion of Schauder basis coincides with that of Hamel basis in the finite dimensional case. Therefore, the basis $\mbox{\boldmath$\xi$}$ of the homogeneous solution space obtained by the infinite Gauss-Jordan algorithm can be generally characterised as \emph{fundamental solution set}.

To summarize, row-finite linear systems can be classified according to the deficiency of their coefficient matrix as follows:
\begin{enumerate}
\item When the deficiency is finite, they yield a finite dimensional space of homogeneous solutions that has a Hamel basis. They include all RO-LDEVCs and all LDEVCs of irregular order with finite index. 
\item When the deficiency is infinite they yield an infinite-dimensional space of homogeneous solutions that has a Schauder basis. They include all LDEVCs of irregular order with infinite index. \end{enumerate}




The general homogeneous solution $y=(y_k)_{k\in\omega}$ of any arbitrary row-finite $\omega\times \omega$ linear system ($\defic(\A)<\infty$ or $\defic(\A)=\infty$) is given by the unified formula
\begin{equation}\label{GGHS}
\displaystyle y=\sum_{k=0}^{\defic(\A)}c_{k}\ \mbox{\boldmath$\xi$}^{(s_k)},
\end{equation}
where $\{s_k\}_{k=0,1,...,\defic(\A)}$ is the set $\clengths_{\A}$ of inaccessible row-lengths of $\RS(\A)$, $\{\mbox{\boldmath$\xi$}^{(s_k)}\}_{k=0,1,...,\defic(\A)}$ is the set of fundamental solutions and $c_{k}$ are free constants for all $k=0,1,...,\defic(\A)$.
\section{Solutions for Linear Difference Equations \\of Regular Order in terms of Hessenbergians}
\label{sec:ExplicitSolutionsForLinearDifferenceEquationsOfRegularOrder}
Up to now, the infinite Gaussian elimination has been used as an algorithm to construct solution sequences for row-finite systems. 
In this section the infinite Gaussian elimination will serve to provide, through symbolic computation, an explicit solution formula for RO-LDEVCs (starting from lower order LDEVCs) in terms of Hessenbergians. In theorem \ref{th. AO(N)}, these formulas will be generalised to cover all RO-LDEVCs applying the mathematical induction.
\subsection{First Order Linear Difference Equation}
\label{sec:FirstOrderLinearDifferenceEquation}
The standard solution (see ~\cite{El:Int}) of the initial value problem
\begin{equation} \label{first order} \left\{ \begin{array}{ll}y_n&=a_ny_{n-1} ,\ \   n\in\naturals\\
                                                                y_{-1}&=c_0\end{array}\right.,
\end{equation}
with $a_n\not=0$ for some $n\in\naturals$, is recovered by applying the infinite Gaussian elimination with right pivoting to the coefficient matrix 
\begin{equation} \label{FODE}
\A=\left(\begin{array}{ccccc}
 -a_0  &  1        &   0      &   0    &\!\!\! ... \!\!\! \\
  0    & -a_1      &   1      &   0    &\!\!\! ... \!\!\! \\
  0    &   0        & -a_2    &   1 
  &\!\!\! ... \!\!\! \vspace{-0.05in}\\   
  \vdots & \vdots &\vdots &\vdots 
\end{array} \right)
\end{equation}
of Eq. (\ref{first order}). It results in the Hermite Form of $\A$:
\[\QHF=\left(
\begin{array}{ccccccc}
-a_0        & 1 & 0 & 0 & 0 &...\\
-a_0 a_1   & 0 & 1 & 0 & 0 &...\\
-a_0 a_1a_2 & 0 & 0 & 1 & 0 &...\vspace{-0.05in}\\
   .&.&.&.&.&...\vspace{-0.1in}\\
   .&.&.&.&.&...\vspace{-0.1in}\\
   .&.&.&.&.&...
\end{array}
\right).\]
In view of (\ref{FSHS}), the fundamental solution $\mbox{\boldmath$\xi$}$ of Eq. (\ref{first order}) is formulated in terms of the opposite-sign, first column of $\QHF$:
\[\mbox{\boldmath$\xi$}=(1,a_0,a_0 a_1,a_0 a_1a_2,...,\prod_{i=0}^na_i,...)^T.\]
Let us call $\mbox{\boldmath$\xi$}(n)$ the general term of $\mbox{\boldmath$\xi$}$. Then,
\[y_n=c_0\mbox{\boldmath$\xi$}(n)=[\prod_{i=0}^{n}a_i]c_0,\]
in full accord with the well known solution of the initial value problem of Eq. (\ref{first order}).
\subsection{Homogeneous Linear Difference Equation \\of Second Order}
\label{sec:ExplicitFundamentalSolutions}
The solution of the second order linear difference equation, with non-constant coefficients is a decisive step for extending the first order LDE solution to the $ N $-th and ascending order linear recurrence.

The normal form of the second order linear recurrence is given by
\begin{equation} \label{SVDE}
y_{n}+b_n y_{n-1}+a_n y_{n-2}=0, \hspace{0.2in} n\in\naturals,
\end{equation}
where $a_n\not=0$ for some $n\in\naturals$. The associated row-finite matrix $\A$ of equation (\ref{SVDE}) is given by (\ref{example1}). 

Following the results of example \ref{sec:Example1}, the infinite Gaussian elimination algorithm, implemented under a rightmost pivot strategy reduces $\A$ to its HF:
\begin{equation}\label{HF3}
\QHF=\left(\begin{array}{ccccccc}
a_0 & b_0 & 1 & 0 & 0 & 0 &...\\
 -a_0 b_1 & a_1-b_0 b_1 & 0 & 1 & 0 & 0 &...\\
 a_0 b_1 b_2-a_0 a_2 & -a_2 b_0+b_1 b_2 b_0-a_1 b_2 & 0 & 0 & 1 & 0 &...\vspace{-0.05in}\\
   .&.&.&.&.&.&...\vspace{-0.1in}\\
     .&.&.&.&.&.&...\vspace{-0.1in}\\
       .&.&.&.&.&.&...
\end{array}
\right)
\end{equation}
In view of (\ref{FSHS}) the first two opposite-sing columns of $\QHF$ augmented at the top by $(1,0)$ and $(0,1)$ respectively, form a fundamental solution set of Eq. (\ref{SVDE}):
\[\begin{array}{ll}
\mbox{\boldmath$\xi$}^{(1)}=&(1,0,-a_0, a_0 b_1,-a_0 b_1b_2\!+\!a_0 a_1,...)^T\\
\mbox{\boldmath$\xi$}^{(2)}=& (0,1,-b_0,-a_1\!+\!b_0 b_1,-b_0b_1b_2\!+\!a_1 b_1\!+\!a_1 b_0,...)^T.
\end{array}\]
We observe that the terms of theses fundamental solution sequences are expansions of certain determinants given by
\[ \begin{array}{ll} \mbox{\boldmath$\xi$}^{(0)}\!\!\!\!&=\left\{\begin{array}{ll} 
 \mbox{\boldmath$\xi$}_{-2 }^{(0)}=1  &  \vspace{0.03in}\\
\mbox{\boldmath$\xi$}_{-1 }^{(0)}=0  &   \vspace{0.03in}\\
\mbox{\boldmath$\xi$}_{0 }^{(0)}=-a_0 & \\ 
\mbox{\boldmath$\xi$}_{1 }^{(0)}=\det\left(\begin{array}{cc} 
  a_0 &  1  \\ 
  0   & b_1 
\end{array} \right) &\vspace{0.1in}\\
\mbox{\boldmath$\xi$}_{2}^{(0)}\!\!=\!-\!\det\left(\begin{array}{ccc} 
 a_0 &  1  & 0\\ 
 0   & b_1 & 1\\
 0   & a_2 & b_2 \end{array} \right)\vspace{-0.05in}\\
 . \vspace{-0.1in}\\
 .\vspace{-0.1in}\\
 .\end{array}\right.\end{array}\hspace{-0.35in} \begin{array}{ll}\mbox{\boldmath$\xi$}^{(1)}\!\!\!\!&=\left\{\begin{array}{ll}          \mbox{\boldmath$\xi$}_{-2}^{(1)}=0 \vspace{0.03in}\\  \mbox{\boldmath$\xi$}_{-1}^{(1)}=1 \vspace{0.03in}\\
 \mbox{\boldmath$\xi$}_{0}^{(1)}=-b_0  \\
 \mbox{\boldmath$\xi$}_{1}^{(1)}=\det\left(\begin{array}{cc} b_0 &  1  \\ 
    a_1 & b_1 \end{array} \right) & \vspace{0.1in}\\
 \mbox{\boldmath$\xi$}_{2}^{(1)}\!\!=\!-\!\det\left(\begin{array}{ccc} 
b_0 &  1  & 0\\ 
a_1 & b_1 & 1\\
0  & a_2 & b_2\end{array}\right)\vspace{-0.05in}\\ . \vspace{-0.1in}\\
.\vspace{-0.1in}\\
.\end{array}\right.\end{array} \]
This sequence leads to the subsequent general terms of $\mbox{\boldmath$\xi$}^{(0)},\mbox{\boldmath$\xi$}^{(1)}$:
\[ \mbox{\boldmath$\xi$}_{n}^{(0)}=(-1)^{n+1} \det\left(\begin{array}{ccccccc} 
a_0 &  1  & 0 & ...&0 &0 &0  \\ 
 0 &  b_1 & 1 & ...&0 &0 &0  \vspace{-0.05in}\\
 .   &.    &.  & ...&. &. &.  \vspace{-0.1in}\\ 
 .   &.    &.  & ...&. &. &.  \vspace{-0.1in}\\ 
  .   &.    &.  & ...&. &. &.  \\
  0   &  0  & 0 & ...&a_{n-1}&b_{n-1}& 1 \\
  0   &  0  & 0 & ...&   0   &a_{n}&b_{n} 
  \end{array} \right),  \ n\ge 0,\]                               
\[ \mbox{\boldmath$\xi$}_{n}^{(1)}=(-1)^{n+1} \det\left(\begin{array}{ccccccc} 
 b_0 &  1  & 0 & ...&0 &0 &0  \\ 
 a_1 & b_1 & 1 & ...&0 &0 &0 \vspace{-0.05in}\\
 .   &.     &.  & ...&. &. &. \vspace{-0.1in}\\
 .   &.     &.  & ...&. &. &. \vspace{-0.1in}\\
 .   &.     &.  & ...&. &. &. \\
 0   &  0  & 0 & ...&a_{n-1}&b_{n-1}  &  1 \\
 0   &  0  & 0 & ...&  0    &  a_{n}& b_{n}
\end{array} \right),  \   n\ge 0.  \]
 A formal proof of these formulas in a more general form will follow in the next subsection  (see theorem \ref{th. AO(N)}).
\subsection{Linear Difference Equation of Ascending Order} 
Let us start by considering the linear recurrence of ascending order with variable coefficients having index $1$:
\begin{equation} \label{HAO1}
y_{n}+a_{n,n} y_{n-1}+...+a_{n,1} y_{0}+a_{n,0}y_{-1}=0.
\end{equation}
The associated matrix is \vspace{-0.03in}
\begin{equation} \label{MAO1}\!\!\!\!\!  
\A=\left(\begin{array}{ccccccc}
a_{0,0} &   1    & 0 &  0 &  0  &  ...  \\
a_{1,0} & a_{1,1} & 1 &  0 &  0  &   ...  \\
a_{2,0} & a_{2,1} &a_{2,2} & 1 &  0  &   ...  \vspace{-0.03in}\\
\vdots & \vdots & \vdots     & \vdots    &  \vdots    & \vdots\vdots\vdots 
\end{array} \right).
\end{equation}
The infinite Gaussian elimination  gives the HF of $\A$:
\begin{equation} \label{HAO1}  
\QHF=\left(\begin{array}{ccccccc}
a_{0,0} &   1    & 0 &  0 &  0  &  ...  \\
a_{1,0}-a_{0,0}a_{1,1} & 0 & 1 &  0 &  0  &   ...  \\
a_{2,0}-a_{0,0}a_{2,1}-a_{2,2}a_{1,0}+a_{0,0}a_{1,1}a_{2,2} & 0 & 0 & 1 &  0  &   ...  \vspace{-0.03in}\\
\vdots & \vdots & \vdots     & \vdots    &  \vdots    & \vdots\vdots\vdots 
\end{array} \right).
\end{equation}
The fundamental solution sequence consists of the opposite-sign entries of the first column of $\QHF$ in (\ref{HAO1}), augmented on their left by $1$s. These are expansions of full Hessenbergians as described below:\vspace{-0.1in}
\begin{equation}\label{fundamental solution AO(1)}
\mbox{\boldmath$\xi$}=\{1,-a_{0,0},\left|\begin{array}{cc} a_{0,0} & 1 \\ a_{1,0} & a_{1,1}\end{array}\right|, -\left|\begin{array}{ccc} a_{0,0} & 1 &0 \\ a_{1,0} & a_{1,1} & 1\\ a_{2,0} & a_{2,1} & a_{2,2} \end{array}\right|,... \}
\end{equation}

The fundamental solution in (\ref{fundamental solution AO(1)}), in addition to the fundamental solutions formulated in the previous subsections \ref{sec:FirstOrderLinearDifferenceEquation} and \ref{sec:ExplicitFundamentalSolutions} are all special cases of fundamental solutions for the homogeneous RO-LDEVC of index $N$, that is
\begin{equation} \label{HAON}
y_{n}+a_{n,N+n-1} y_{n-1}+...+a_{n,1} y_{1-N}+a_{n,0}y_{-N}=0, \ \ \ n\in\naturals.
\end{equation}
\begin{theorem} \label{th. AO(N)} The general term of the fundamental solution sequence $\mbox{\boldmath$\xi$}^{(i)}$, $0\le i\le N-1$, of \emph{Eq. (\ref{HAON})}, with initial condition values   
\begin{equation}\label{initial fundamental conditions}
\mbox{\boldmath$\xi$}_{n}^{(i)}=\left\{\begin{array} {cl} 0, & {\rm if} \ n\not=i-N \\  
1, & {\rm if} \ n=i-N    \end{array}\right.\!\!\!\! , \ \ {\rm for} \ -N\le n\le -1, \end{equation}
is of the form:
\begin{equation} \label{fundamental i}\begin{array}{l}\mbox{\boldmath$\xi$}_{n}^{(i)}=\\
(-1)^{n+1} \det\left(\!\!\!\begin{array}{cccccc} 
 a_{0i} \!\!\!\!& 1 \!\!\!\!  & 0 \!\!\!\!\!& ...\!\!\!\! &0\!\!\! &0  \\ 
 a_{1i} \!\!\!\!&  a_{1N}\!\!\! & 1 \!\!\!\! & ...\!\!\!\!&0\!\!\! &0  \vspace{-0.06in}\\
 \vdots  \!\!\!\!&\vdots    \!\!\!\!&\vdots  \!\!\!\!& \vdots\vdots\vdots\!\!\!\!&\vdots\!\!\! &\vdots \vspace{-0.04in}\\ a_{n-1,i}\!\!\!\!   &a_{n-1,N}\!\!\!\!  & a_{n-1,N+1}\!\!\!\! & ...\!\!\!\!&a_{n-1, n+N-2}\!\!\!& 1\\
 a_{n,i}\!\!\!\! &a_{n,N}\!\!\!\!  & a_{n,N+1}\!\!\!\!  & ...\!\!\!\!&a_{n, n+N-2}\!\!\!& a_{n, n+N-1}
 \end{array}\!\!\! \right)\end{array}
  \end{equation}
for $n\ge 0$.                               
\end{theorem}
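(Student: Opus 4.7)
The plan is to proceed by induction on $n \ge 0$, showing that the sequence defined by the Hessenbergian formula (\ref{fundamental i}) satisfies the same recurrence and initial conditions as $\mbox{\boldmath$\xi$}^{(i)}$; since the $N$-term recurrence (\ref{HAON}) together with the $N$ prescribed initial values (\ref{initial fundamental conditions}) uniquely determines the sequence, this identifies the two.

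First, I would substitute the initial data (\ref{initial fundamental conditions}) into Eq. (\ref{HAON}): because $0 \le i \le N-1$ forces $-N \le i-N \le -1$, exactly one initial index contributes (namely $j=i-N$, with coefficient $a_{n,(i-N)+N}=a_{n,i}$), so the defining recurrence reduces to
\[\mbox{\boldmath$\xi$}^{(i)}_n = -a_{n,i} - \sum_{j=0}^{n-1} a_{n,N+j}\, \mbox{\boldmath$\xi$}^{(i)}_j, \qquad n \ge 0.\]
For the base case $n=0$, the Hessenbergian collapses to $\det(a_{0,i})=a_{0,i}$, so the formula gives $\mbox{\boldmath$\xi$}^{(i)}_0 = -a_{0,i}$, matching the reduced recurrence. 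For the inductive step, let $H_n^{(i)}$ denote the $(n+1)\times(n+1)$ lower Hessenberg matrix in (\ref{fundamental i}) and expand $\det H_n^{(i)}$ along its bottom row $(a_{n,i},\,a_{n,N},\,a_{n,N+1},\,\ldots,\,a_{n,N+n-1})$, obtaining
\[\det H_n^{(i)} = (-1)^n a_{n,i}\, M_{n,0} + \sum_{j=0}^{n-1} (-1)^{n+j+1} a_{n,N+j}\, M_{n,j+1},\]
where $M_{n,k}$ denotes the $n\times n$ minor obtained by deleting the last row and the $k$-th column of $H_n^{(i)}$.

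The crux of the argument is the identification of these minors. For $M_{n,0}$, removing the leftmost column drops the superdiagonal of ones onto the main diagonal, while the zero region above the superdiagonal becomes the strict upper triangle; hence $M_{n,0}$ is lower triangular with unit diagonal and evaluates to $1$. For $M_{n,j+1}$ with $0 \le j \le n-1$, the same shift occurs only in the rows $j+1,\ldots,n-1$, producing an $n\times n$ block lower-triangular matrix whose top-left $(j+1)\times(j+1)$ block is exactly $H_j^{(i)}$, whose top-right $(j+1)\times(n-1-j)$ block vanishes (by the zero pattern above the superdiagonal of $H_n^{(i)}$), and whose bottom-right $(n-1-j)\times(n-1-j)$ block is lower triangular with unit diagonal. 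Consequently $M_{n,j+1} = \det H_j^{(i)} \cdot 1 = \det H_j^{(i)}$. This block decomposition is the only step requiring genuine combinatorial care, and it is where I expect the main technical work to lie; everything else is sign bookkeeping.

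Substituting $M_{n,0} = 1$ and $M_{n,j+1} = (-1)^{j+1}\mbox{\boldmath$\xi$}^{(i)}_j$ (by the inductive hypothesis applied to $\det H_j^{(i)}$) into the cofactor expansion and multiplying through by $(-1)^{n+1}$ collapses the identity to
\[(-1)^{n+1}\det H_n^{(i)} = -a_{n,i} - \sum_{j=0}^{n-1} a_{n,N+j}\, \mbox{\boldmath$\xi$}^{(i)}_j,\]
which is precisely the reduced recurrence displayed above. This closes the induction and proves the theorem.
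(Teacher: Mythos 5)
Your proposal is correct and follows essentially the same route as the paper: induction on $n$, cofactor expansion of the Hessenbergian along its bottom row, identification of the minors with the lower-order Hessenbergians (via the unit lower-triangular blocks created by the superdiagonal of ones), and verification that the resulting expression satisfies the recurrence reduced by the initial data. The sign bookkeeping and the block-triangular decomposition of the minors match the paper's argument, which the paper states somewhat more implicitly.
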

\begin{proof}
The proof is by induction on $n\in\naturals$. The unique solution of Eq. (\ref{HAON}), subject to the initial conditions in (\ref{initial fundamental conditions}), is the fundamental solution sequence $\mbox{\boldmath$\xi$}^{(i)}$ given by (\ref{FSHS}). The sequence
\[ y_{-N}=0,...,y_{i-N}=1,...,y_{-1}=0, y_{0}=-a_{0i}\]
solves the equation (\ref{HAON}) for $n=0$, that is $\mbox{\boldmath$\xi$}_{0}^{(i)}=-a_{0i}$ (which agrees with the solution (\ref{fundamental i}) for $n=0$). 

Next we adopt the notation:
\[ \mbox{\boldmath$\xi$}_{-N}^{(i)}=y_{-N}=0,...,\mbox{\boldmath$\xi$}_{i-N}^{(i)}=y_{i-N}=1,...,\mbox{\boldmath$\xi$}_{-1}^{(i)}=y_{-1}=0.\]
Let us call 
\[\begin{array}{c} D_k=\det\left(\begin{array}{cccccc} 
  a_{0i} &  1  & 0 & ...&0 &0  \\ 
  a_{1i} &  a_{1N} & 1 & ...&0 &0  \vspace{-0.05in}\\
  .   &.    &.  & ...&. &. \vspace{-0.1in}\\ 
  .   &.    &.  & ...&. &.   \vspace{-0.1in}\\ 
  .   &.    &.  & ...&. &.   \\
  a_{k-1,i}&a_{k-1,N}& a_{k-1,N+1}& ...&a_{k-1, k+N-2}& 1\\ 
  a_{k,i}   &a_{k,N}  & a_{k,N+1}  & ...&a_{k, k+N-2}& a_{k, k+N-1}
  \end{array} \right)\end{array}\]
The induction hypothesis states that $\mbox{\boldmath$\xi$}_{n}^{(i)}=(-1)^{n+1}D_n$ for all $n$: $1\le n\le k-1$,
that is $(-1)^{n+1}D_n$ solves  equation (\ref{HAON}) for $n\le k-1$.
Expanding $D_k$ along the last row we have:   
 \[\begin{array}{l}               D_k=(-1)^{2k+2} a_{k, k+N-1} \det\left(\!\!\begin{array}{ccccc} 
  a_{0i} &  1 \!\!\!\! &...\!\!\!\!&0 &0  \\ 
  a_{1i} &  a_{1N}\!\!\!\! & ...\!\!\!\!&0 &0  \vspace{-0.05in}\\
  .   &.  \!\!\!\!  & ...\!\!\!\!&. &. \vspace{-0.1in}\\ 
  .   &.   \!\!\!\! & ...\!\!\!\!&. &.   \vspace{-0.1in}\\ 
  .   &.   \!\!\!\!  & ...\!\!\!\!&. &.   \\
  a_{k-2,i}&a_{k-2,N}\!\!\!\!&  ...\!\!\!\!&a_{k-2, k+N-3}& 1\\ 
  a_{k-1,i}   &a_{k-1,N}\!\!\!\!  &  ...\!\!\!\!&a_{k-1, k+N-3}& a_{k-1, k+N-2}
 \end{array}\!\! \right)+\vspace{0.1in}\\
 (-1)^{2k+1} a_{k, k+N-2}\det\left(\begin{array}{ccccc} 
  a_{0i} &  1 \!\! & ...\!\!&0 &0  \\ 
  a_{1i} &  a_{1N}\!\! &...\!\!&0 &0  \vspace{-0.05in}\\
  .   &.   \!\! & ...\!\!&. &. \vspace{-0.1in}\\ 
  .   &. \!\!   &...\!\!&. &.   \vspace{-0.1in}\\ 
  .   &.  \!\!  &...\!\!&. &.   \\
  a_{k-3,i}&a_{k-3,N}\!\!& ...\!\!& a_{k-3, k+N-4}& 1\\ 
  a_{k-2,i}   &a_{k-2,N} \!\! &  ...\!\!&a_{k-2, k+N-4}& a_{k-2, k+N-3}
 \end{array} \right)+...\vspace{0.1in}\\
  ...+(-1)^{k+3} a_{k,N}a_{0,i}+(-1)^{k+2}a_{k,i}\cdot 1.
 \end{array}\]
Using the induction hypothesis and taking into account that $\mbox{\boldmath$\xi$}_{i-N}^{(i)}=1$, we have                            
 \[\begin{array}{lll}D_k&=&(-1)^{2k+2} a_{k, k+N-1} (-1)^k\mbox{\boldmath$\xi$}_{k-1}^{(i)}+\\
 &&(-1)^{2k+1} a_{k, k+N-2}(-1)^{k-1}\mbox{\boldmath$\xi$}_{k-2}^{(i)}+...+\\
 &&(-1)^{k+3} a_{k,N}(-1)^1 \mbox{\boldmath$\xi$}_{0}^{(i)}+(-1)^{k+2}a_{k,i}\mbox{\boldmath$\xi$}_{i-N}^{(i)},\end{array}\] 
whence
  \[\begin{array}{lll}
D_k &=&(-1)^{3k+2} a_{k, k+N-1} \mbox{\boldmath$\xi$}_{k-1}^{(i)}+\\
  &&(-1)^{3k} a_{k, k+N-2}\mbox{\boldmath$\xi$}_{k-2}^{(i)}+...+\\
  &&(-1)^{k+4} a_{k,N} \mbox{\boldmath$\xi$}_{0}^{(i)}+(-1)^{k+2}a_{k,i}\mbox{\boldmath$\xi$}_{i-N}^{(i)} \end{array}\]  
Multiplying both sides of the latter expression of $D_k$ by $(-1)^{k+1}$ we obtain:                                                      \[(-1)^{k+1}D_k=-a_{k, k+N-1}\mbox{\boldmath$\xi$}_{k-1}^{(i)}-a_{k, k+N-2}\mbox{\boldmath$\xi$}_{k-2}^{(i)}-...-a_{k,N}           \mbox{\boldmath$\xi$}_{0}^{(i)}-a_{k,i}\mbox{\boldmath$\xi$}_{i-N}^{(i)}.\]   
Transferring all the terms of the above equation to the left-hand side (in order to get zero on the right-hand side) and then adding the zero sum $\displaystyle\sum_{\substack{m=1\\ m\not=N-i}}^{N}\!\!a_{k,N-m}\mbox{\boldmath$\xi$}_{-m}^{(i)}$ (where each individual term is zero), we get:
\[\begin{array}{l}(-1)^{k+1}D_k+a_{k, k+N-1}\mbox{\boldmath$\xi$}_{k-1}^{(i)}+a_{k, k+N-2}\mbox{\boldmath$\xi$}_{k-2}^{(i)}+...+a_{k,N}\mbox{\boldmath$\xi$}_{0}^{(i)}\\
+a_{k,N-1}\mbox{\boldmath$\xi$}_{-1}^{(i)}+...+a_{k,i}\mbox{\boldmath$\xi$}_{i-N}^{(i)}+...+a_{k,0}\mbox{\boldmath$\xi$}_{-N}^{(i)}=0.\end{array}\]         Thus, the sequence
\[\begin{array}{ll}y_{-N}=\!\!\!&\mbox{\boldmath$\xi$}_{-N}^{(i)},...,\ y_{i-N}=\mbox{\boldmath$\xi$}_{i-N}^{(i)},...,\ y_{-1}=\mbox{\boldmath$\xi$}_{-1}^{(i)},\ y_{0}=\mbox{\boldmath$\xi$}_{0}^{(i)},...,\\
y_{k-1}=\!\!\!&\!\mbox{\boldmath$\xi$}_{k-1}^{(i)},\ y_k=(-1)^{k+1}D_k\end{array}\]
solves the equation (\ref{HAON}) for $n=k$, whence 
 \[\mbox{\boldmath$\xi$}_{k}^{(i)}=(-1)^{n+1}D_k\]
and the induction is complete.\end{proof}
The fundamental solution matrices associated with $\mbox{\boldmath$\xi$}_{k}^{(i)}$, $k\ge 0$, are in lower Hessenberg form.
On account of
\[(-1)^{n+1}\det\left(\begin{array}{ccccccc} -a_0  &  1   & 0 & ...&0 &0 &0  \\ 
  0   & -a_1 & 1 & ...&0 &0 &0  \vspace{-0.05in}\\
   .   &.     &.  & ...&. &. &. \vspace{-0.1in}\\
  .   &.     &.  & ...&. &. &. \vspace{-0.1in}\\
  .   &.     &.  & ...&. &. &. \\
  0   &  0   & 0 & ...& 0& -a_{n-1}  &  1 \\
  0   &  0   & 0 & ...&  0    &  0& -a_{n}
\end{array} \right)=\prod_{i=0}^na_i,\]
we conclude that the fundamental solution formula in (\ref{fundamental i}) naturally generalizes the standard solution of the first order linear recurrence (see subsection \ref{sec:FirstOrderLinearDifferenceEquation}). 
\subsection{The General Solution as a Single Hessenbergian}
\label{sec:ExplicitGeneralSolution}
Consider the normal form of the non-homogeneous RO-LDEVC:
\begin{equation} \label{NHAON}
y_{n}+a_{n,N+n-1} y_{n-1}+...+a_{n,1} y_{1-N}+a_{n,0}y_{-N}=g_n,\  \ n\in\naturals.
\end{equation}
A particular solution sequence $P=(p_{-N},p_{1-N},...,p_{-1},p_0,p_1,...,p_n,...)^T$ of Eq. (\ref{NHAON})  is given by
$p_{-N}=p_{1-N}=...=p_{-1}=0$ and 
\begin{equation} \label{particular solution 2}p_{n}=
(-1)^{n} \det\left(\!\!\!\begin{array}{cccccc} 
          g_{0} \!\!\!\!& 1\!\!\!\!  & 0 \!\!\!\!\!& ...\!\!\!\!&0\!\!\! &0  \\ 
          g_{1} \!\!\!\!&  a_{1N}\!\!\!\! & 1\!\!\! & ...\!\!\!\!&0\!\!\! &0  \vspace{-0.05in}\\
           .   \!\!\!\!&.    \!\!\!\!&.  \!\!\!\! & ...\!\!\!\!&.\!\!\! &. \vspace{-0.1in}\\ 
           .   \!\!\!\!&.    \!\!\!\!&.  \!\!\!\!& ...\!\!\!\!&.\!\!\! &. \vspace{-0.1in}\\
           .   \!\!\!\!&.    \!\!\!\!&.  \!\!\!\!& ...\!\!\!\!&. \!\!\!&. \\
       g_{n-1}\!\!\!\!   &a_{n-1,N}\!\!\!\!  & a_{n-1,N+1}\!\!\!\! & ...\!\!\!\!&a_{n-1, n+N-2}\!\!\!& 1\\
       g_{n}\!\!\!\! &a_{n,N}\!\!\!\!  & a_{n,N+1}\!\!\!\!  & ...\!\!\!\!&a_{n, n+N-2}\!\!\!& a_{n, n+N-1}
       \end{array}\!\!\! \right)
   \end{equation}
for $n\ge 0$. The proof follows the pattern of the theorem \ref{th. AO(N)} proof. 
The $n$th term of the general solution of Eq. (\ref{NHAON}) is given by:
\begin{equation} \label{unique solution 4} 
y_n=p_n+\sum_{i=0}^{N-1}\mbox{\boldmath$\xi$}_{n}^{(i)}y_{i-N}.
\end{equation}
In the following theorem we adhere to the subsequent notation: Any $(n+1)\times n$ matrix $\B$ augmented on the left by a $(n+1)\times 1$ column $c$ results in a square matrix  $(n+1)\times (n+1)$ denoted as  $[c\ ; \B]$.
\begin{theorem} \label{compact solution form} The general solution of the non-homogeneous RO-LDEVC (given in Eq. \emph{(\ref{NHAON})}) takes the single Hessenbergian form\emph{:}
\begin{equation} \label{compact general solution form}\begin{array}{l} y_{n}=\\
(-1)^{n}\! \det\!\left(\!\!\!\begin{array}{cccccc} 
 \displaystyle g_0\!-\!\!\sum_{i=0}^{N-1}a_{0i}y_{i-N} \!\!\!\!& 1\!\!\!\!  & 0 \!\!\!\!\!& ...\!\!\!\!&0\!\!\! &0  \\ 
\displaystyle g_1\!-\!\!\sum_{i=0}^{N-1}a_{1i}y_{i-N} \!\!\!\!&  a_{1N}\!\!\!\! & 1\!\!\! & ...\!\!\!\!&0\!\!\! &0  \vspace{-0.05in}\\
           .   \!\!\!\!&.    \!\!\!\!&.  \!\!\!\! & ...\!\!\!\!&.\!\!\! &. \vspace{-0.1in}\\ 
           .   \!\!\!\!&.    \!\!\!\!&.  \!\!\!\!& ...\!\!\!\!&.\!\!\! &. \vspace{-0.1in}\\
           .   \!\!\!\!&.    \!\!\!\!&.  \!\!\!\!& ...\!\!\!\!&. \!\!\!&. \\
\displaystyle g_{n-1}\!-\!\!\sum_{i=0}^{N-1}a_{n-1,i}y_{i-N}\!\!\!\!   &a_{n-1,N}\!\!\!\!  & a_{n-1,N+1}\!\!\!\! & ...\!\!\!\!&a_{n-1, n+N-2}\!\!\!& 1\\
\displaystyle g_n\!-\!\!\sum_{i=0}^{N-1}a_{n,i}y_{i-N}\!\!\!\! &a_{n,N}\!\!\!\!  & a_{n,N+1}\!\!\!\!  & ...\!\!\!\!&a_{n, n+N-2}\!\!\!& a_{n, n+N-1}
 \end{array}\!\!\! \right)\end{array}
\end{equation}
\end{theorem}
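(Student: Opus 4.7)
The plan is to derive the single-Hessenbergian form directly from the decomposition already recorded in (\ref{unique solution 4}), namely $y_n = p_n + \sum_{i=0}^{N-1} \mbox{\boldmath$\xi$}_n^{(i)} y_{i-N}$, by invoking the multilinearity of the determinant in its first column. Both building blocks are already available in Hessenbergian form: $p_n$ from (\ref{particular solution 2}) and $\mbox{\boldmath$\xi$}_n^{(i)}$ from Theorem \ref{th. AO(N)}.

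First I would observe that the Hessenbergian for $p_n$ and the $N$ Hessenbergians for $\mbox{\boldmath$\xi$}_n^{(i)}$, $0 \le i \le N-1$, all share an identical right-hand block of $n$ columns: those columns contain the coefficients $a_{k,N}, a_{k,N+1}, \ldots, a_{k,n+N-1}$ on and below the diagonal together with the superdiagonal of $1$s. The matrices differ only in their first columns: $p_n$ carries the column $(g_0, g_1, \ldots, g_n)^T$ with prefactor $(-1)^n$, while each $\mbox{\boldmath$\xi$}_n^{(i)}$ carries $(a_{0,i}, a_{1,i}, \ldots, a_{n,i})^T$ with prefactor $(-1)^{n+1} = -(-1)^n$.

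Next I would pull the common $(-1)^n$ outside and apply multilinearity of the determinant in the first column. Substituting into (\ref{unique solution 4}) and combining the $N+1$ determinants into a single one, the first column becomes
\[
\left(g_k - \sum_{i=0}^{N-1} a_{k,i}\, y_{i-N}\right)_{0 \le k \le n}^{T},
\]
while the right-hand Hessenberg block is untouched. This produces exactly the determinant displayed in (\ref{compact general solution form}).

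I do not expect a serious technical obstacle here; the argument is essentially a one-line application of column multilinearity once the signs are aligned. The only delicate points are (a) keeping the prefactors $(-1)^n$ and $(-1)^{n+1}$ straight so that the common sign factors consistently for every $n \ge 0$, and (b) verifying the degenerate case $n = 0$, where the Hessenberg matrix collapses to the $1 \times 1$ entry $g_0 - \sum_{i=0}^{N-1} a_{0,i}\, y_{i-N}$, which must agree with $y_0 = g_0 - \sum_{i=0}^{N-1} a_{0,i}\, y_{i-N}$ obtained by setting $n = 0$ in (\ref{NHAON}).
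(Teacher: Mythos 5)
Your proposal is correct and follows essentially the same route as the paper: the paper likewise isolates the common $(n+1)\times n$ Hessenberg block shared by the determinants for $p_n$ and the $\mbox{\boldmath$\xi$}_n^{(i)}$, factors out $(-1)^n$ (absorbing the $(-1)^{n+1}$ of the fundamental solutions as a minus sign), and merges the $N+1$ determinants into one by multilinearity in the first column. Your added check of the degenerate case $n=0$ is a harmless extra not present in the paper.
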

\begin{proof}
Matrices (\ref{particular solution 2}) and (\ref{fundamental i}) have in common the $(n+1)\times n$ submatrix:
\[ \mbox{\boldmath$\rm C$}_{(n+1)\times n}=\left(\begin{array}{cccc}
  1    &\!\!\!\!  0         &\!\!\!\!\!  ... \!\!\!\!\! & 0             \\
  a_{1N}    & \!\!\!\! 1  &\!\!\!\!\!  ... \!\!\!\!\! & 0             \\
    .         & \!\!\!\!  .        &\!\!\!\!\!  ... \!\!\!\!\! & .            \vspace{-0.13in} \\
   .         & \!\!\!\!  .        &\!\!\!\!\!  ... \!\!\!\!\! & .            \vspace{-0.13in} \\
   .         & \!\!\!\!  .        &\!\!\!\!\!  ... \!\!\!\!\! & .             \\
  a_{n-1N}  & \!\!\!\! a_{n-1\; N+1} &\!\!\!\!\!  ... \!\!\!\!\! & 1       \\ 
  a_{nN}    & \!\!\!\! a_{n\; N+1}   &\!\!\!\!\!  ... \!\!\!\!\! & a_{n\; n+N-1} \end{array}\right).
\]
The general solution $y_n$ in (\ref{unique solution 4}) can be written as:
\[y_n\!\!=\!\!(-1)^n\det\!\left[\!\!\left(\!\!\begin{array}{l}
 g_0\vspace{0.04in}\\
 g_1\\
 .  \vspace{-0.13in}\\
 . \vspace{-0.13in} \\
 .\\
 g_n\end{array}\!\!\!\right)\!;\! \mbox{\boldmath$\rm C$}_{(n+1)\times n}\!\right]\!\!+\!(-1)^{n+1}\!\sum_{i=0}^{N-1}y_{i-N}\det\!\left[\!\!\left(\!\!\begin{array}{l}
\displaystyle a_{0i} \vspace{0.04in}\\
\displaystyle a_{1i}\\
 .  \vspace{-0.13in} \\
 . \vspace{-0.13in} \\
 .\\
\displaystyle a_{ni} \end{array}\!\!\!\right)\!;\!\mbox{\boldmath$\rm C$}_{(n+1)\times n}\!\right].
\]
After suitable factorization, the above expression of $y_n$ takes the form:
\[
y_n=(-1)^n\ \left(\det\!\left[\!\!\left(\!\!\begin{array}{l}
 g_0\vspace{0.04in}\\
 g_1\\
 .  \vspace{-0.13in}\\
 . \vspace{-0.13in} \\
 .\\
 g_n\end{array}\!\!\!\right)\!;\! \mbox{\boldmath$\rm C$}_{(n+1)\times n}\!\right]\!\!-\!\displaystyle\sum_{i=0}^{N-1}y_{i-N}\det\!\left[\!\!\left(\!\!\begin{array}{l}
\displaystyle a_{0i} \vspace{0.04in}\\
\displaystyle a_{1i}\\
 .  \vspace{-0.13in} \\
 . \vspace{-0.13in} \\
 .\\
\displaystyle a_{ni} \end{array}\!\!\!\right)\!;\!\mbox{\boldmath$\rm C$}_{(n+1)\times n}\!\right] \right).\]
Taking into account that the determinant is a multilinear form (with respect to its columns) and working along the first column of the right-hand side determinants, the latter expression of $y_n$ takes the subsequent forms: 
\[\begin{array}{ll}
y_n \!\! & =(-1)^n \left(\det\!\left[\!\!\left(\!\!\begin{array}{l}
  g_0\vspace{0.04in}\\
  g_1\\
  .  \vspace{-0.13in}\\
  . \vspace{-0.13in} \\
  .\\
  g_n\end{array}\!\!\!\right)\!;\! \mbox{\boldmath$\rm C$}_{(n+1)\times n}\!\right]\!\!-\!\det\!\left[\!\left(\!\!\begin{array}{c}
\displaystyle \sum_{i=0}^{N-1}a_{0i}y_{i-N} \vspace{0.04in}\\
\displaystyle \sum_{i=0}^{N-1}a_{1i}y_{i-N}\\
  .\vspace{-0.13in} \\
 . \vspace{-0.13in} \\
 .\\
\displaystyle \sum_{i=0}^{N-1}a_{ni}y_{i-N} \end{array}\!\!\!\right) ; \mbox{\boldmath$\rm C$}_{(n+1)\times n}\right]\right)\vspace{0.1in}\\
&=(-1)^n\!\det\!\left[\!\left(\!\!\begin{array}{c}
\displaystyle g_0-\sum_{i=0}^{N-1}a_{0i}y_{i-N} \vspace{0.04in}\\
\displaystyle g_1- \sum_{i=0}^{N-1}a_{1i}y_{i-N}\\
  .\vspace{-0.13in} \\
 . \vspace{-0.13in} \\
 .\\
\displaystyle g_n- \sum_{i=0}^{N-1}a_{ni}y_{i-N} \end{array}\!\!\!\right) ; \mbox{\boldmath$\rm C$}_{(n+1)\times n}\right]\end{array}
\]
as required. \end{proof} 
As a consequence, all solutions of RO-LDEVCs, fundamental, particular and general, can be expressed as a single Hessenbergian.                   
The general solution formula in (\ref{compact general solution form}) includes the solution representation of the $N$th order linear difference equation with variable coefficients established in ~\cite{Kit:Rep} as a special case.
\section{Examples of Solutions for Irregular Order Linear Difference Equations}
\label{sec:LinearDifferenceEquationsOfIrregularOrder}
Two examples of solutions for difference equations of irregular order are discussed in this section. 
The associated row-finite matrices in these examples are of finite and infinite deficiency, possessing as fundamental solution sets a finite and a Schauder basis, respectively.
\begin{example} \label{sec:Example2} {\rm 
Consider the linear difference equation 
\begin{equation} \label{Putnam}
 (n-1)y_{n+2}-(n^2+3n-2)y_{n+1}+2n(n+1)y_n=0.
\end{equation}
}\end{example}
A closed form solution of this equation (see~\cite{Petkov}) is given by:
\begin{equation}\label{ghs2}
y_n=C_12^n+C_2n!.
\end{equation}

If $n\ge 2$, then Eq. (\ref{Putnam}) is a second order linear difference equation with polynomial coefficients. Since $(2^n)_{n\ge 2},(n!)_{n\ge 2}$ are linearly independent sequences, the fundamental theorem of linear difference equations entails that the set $\{(2^n)_{n\ge 2},(n!)_{n\ge 2}\}$ is a fundamental solution set of Eq. (\ref{Putnam}). Accordingly (\ref{ghs2}) is a closed form of the general solution of Eq. (\ref{Putnam}). 

Let us now consider the equation Eq. (\ref{Putnam}) with $n\ge 0$. The leading coefficient is $a_{n, n+2}=n-1$. Thus, Eq. (\ref{Putnam}) is of irregular order. In this case the fundamental theorem is invalid. The row-finite matrix $\A=(a_{ij})_{(i,j)\in\omega\times\omega}$ associated with Eq. (\ref{Putnam}) is of the form:
\[\A=\left(
\begin{array}{llrrrrrrrll}
 0 & 2 & -1 & 0 & 0  & 0 & 0  & 0 & 0 & 0 &...\\
 0 & 4 & -2 & 0 & 0  & 0 & 0  & 0 & 0 & 0 &...\\
 0 & 0 & 12 & -8 & 1 & 0 & 0  & 0 & 0 & 0 &...\\
 0 & 0 & 0 & 24 & -16 & 2 & 0 & 0 & 0 & 0 &...\\
 0 & 0 & 0 & 0 & 40 & -26 & 3 & 0 & 0 & 0 &...\\
 0 & 0 & 0 & 0 & 0 & 60 & -38 & 4 & 0 & 0 &...\\
 0 & 0 & 0 & 0 & 0 & 0 & 84 & -52 & 5 & 0 &...\\
 0 & 0 & 0 & 0 & 0 & 0 & 0 & 112 & -68 & 6 &...\vspace{-0.05in}\\
 . & . & . & . & . & . & . &  .  &  .  & . &...\vspace{-0.1in}\\
 . & . & . & . & . & . & . &  .  &  .  & . &...\vspace{-0.1in}\\
 . & . & . & . & . & . & . &  .  &  .  & . &...
\end{array}
\right)\]
The QHF constructed by the infinite Gauss-Jordan elimination algorithm is:
\[\QHF=\left(\begin{array}{lrrrlllllll}
0 & -2 & 1 & 0 & 0 & 0 & 0 & 0 & 0 & 0 & ...\\
0 & 0 & 0 & 0 & 0 & 0 & 0 & 0 & 0 & 0 & ...\\
0 & 24 & 0 & -8 & 1 & 0 & 0 & 0 & 0 & 0 & ...\\
 0 & 192 & 0 & -52 & 0 & 1 & 0 & 0 & 0 & 0 & ...\\
 0 & 1344 & 0 & -344 & 0 & 0 & 1 & 0 & 0 & 0 & ...\\
 0 & 9888 & 0 & -2488 & 0 & 0 & 0 & 1 & 0 & 0 & ...\\
 0 & 80256 & 0 & -20096 & 0 & 0 & 0 & 0 & 1 & 0 & ...\\
 0 & 724992 & 0 & -181312 & 0 & 0 & 0 & 0 & 0 & 1 & ...\vspace{-0.05in}\\
 . & .     & .  & .       & . & . & . &  .& . & . &...\vspace{-0.1in}\\
 . & . & . & . & . & . & . &  .  &  .  & . &...\vspace{-0.1in}\\
 . & . & . & . & . & . & . &  .  &  .  & . &...
\end{array}
\right)\]
The set of inaccessible row-lengths of $\RS(\A)$ is $\clengths_{\A}=\{0,1,3\}$. By virtue of (\ref{homogeneous solution 2}) the general solution $(y_n)_{n\in\naturals}$ of Eq. (\ref{Putnam}) is
\begin{equation}\label{ghs3} (y_0, y_1, 2y_1, y_3, -(24y_1-8y_3), -(192y_1-52y_3),-(-1344y_1-344y_3),...)^T,
\end{equation}
with free constants $y_0,y_1,y_3$. 

According to (\ref{sequence of fundamental solutions 1}) (or directly from Eq. (\ref{ghs3})) the fundamental solution set consists of three elements, the sequences:
\[\begin{array}{rrrrrrrl}
\mbox{\boldmath$\xi$}^{(0)}=(1, & 0, &  0,  & 0,   &   0,  &  0,   &   0,  & ...)^T\\
\mbox{\boldmath$\xi$}^{(1)}=(0, & 1, &  2,  & 0,    & -24,  & -192, & -1344, & ...)^T\\
\mbox{\boldmath$\xi$}^{(3)}=(0, & 0, &  0,  & 1,   &   8,  &  52,  &  344, & ...)^T.
\end{array}\]
\\Hence, if $n\ge 0$, the dimension of the space of homogeneous solutions of Eq. (\ref{Putnam}) is $\dim (\RNS (\A))=3$.

Call $\zeta=(C_12^n+C_2n!)^T_{n\ge 0}$. 
Notice that $\zeta$ also solves Eq. (\ref{Putnam}) with $n\ge 0$, but, on account of 
\[\dim(\Span(\{(2^n)^T_{n\ge 0},(n!)^T_{n\ge 0}\}))=2<\dim (\RNS (\A)),\]
$\zeta$ is not the general solution any-more.

On the other hand, as $\zeta\in\RNS (\A)$, formula (\ref{GGHS}) gives \[\zeta=\zeta_0\mbox{\boldmath$\xi$}^{(0)}+\zeta_1\mbox{\boldmath$\xi$}^{(1)}+\zeta_3\mbox{\boldmath$\xi$}^{(3)},\] where 
\[\zeta_0=C_12^0+C_20!,\ \zeta_1=C_12^1+C_21!, \ \zeta_3=C_12^3+C_23!\]
As a verification:
\[\begin{array}{ccl}  \zeta_0&=&\zeta_0\cdot 1+\zeta_1\cdot 0+\zeta_3\cdot 0 \vspace{0.05in}\\
\zeta_1&=&\zeta_0\cdot 0+\zeta_1\cdot 1+\zeta_3\cdot 0 \vspace{0.05in}\\
\zeta_2&=&C_12^2+C_22!=2(C_12^1+C_21!)=2\zeta_1=\zeta_0\cdot 0+\zeta_1\cdot 2+\zeta_3\cdot 0 \vspace{0.05in}\\
\zeta_3&=&\zeta_0\cdot 0+\zeta_1\cdot 0+\zeta_3\cdot 1 \vspace{0.05in}\\
\zeta_4&=&C_12^4+C_24!=16C_1+24C_2=(64-48)C_1+(48-24)C_2\\
&=&-48C_1-24C_2+64C_1+48C_2 \ \
=-24(2C_1+C_2)+8(8C_1+6C_2)\\
&=&-24\zeta_1+8\zeta_3\ \
=\zeta_0\cdot 0+\zeta_1\cdot (-24)+\zeta_3\cdot 8 \vspace{0.05in}\\
\zeta_5&=&C_12^5+C_25!=32C_1+120C_2=(416-384)C_1+(312-192)C_2\\
&=&-384C_1-192C_2+416C_1+312C_2\ \
 =-192(2C_1+C_2)+52(8C_1+6C_2) \\
&=&-192\,\zeta_1+52\,\zeta_3\ \
=\zeta_0\cdot 0+\zeta_1\cdot (-192)+\zeta_3\cdot 52 \vspace{-0.05in}\\
.&.&.\hspace{1in}.\hspace{1in}.\hspace{1in}.\hspace{1in}.\vspace{-0.05in}\\
.&.&.\hspace{1in}.\hspace{1in}.\hspace{1in}.\hspace{1in}.\vspace{-0.05in}\\
.&.&.\hspace{1in}.\hspace{1in}.\hspace{1in}.\hspace{1in}.
\end{array}\]
\begin{example} \label{sec:Example3} {\rm
The bivariate function 
$a_{k,m}=1-\cos \frac{(2k-m)\pi}{2}$
generates the variable coefficients of the linear difference equation:
\begin{equation} \label{example2} 
      a_{n,n+2}y_n+a_{n,n+1}y_{n-1}+...+a_{n,1}y_{-1}+a_{n,0}y_{-2}=0,\ \ n \in\naturals.
\end{equation}
Taking into account that $a_{n,n+2}=1-\cos \frac{(n-2)\pi}{2}$, the solutions in $\naturals$ of the trigonometric equation $a_{n,n+2}=0$ are given by: $n=4k+2,\ k=0,1,2,...$. Thus Eq. (\ref{example2}) is of irregular order and of infinite deficiency.}\end{example}
The row-finite matrix $\A=(a_{ij})_{(i,j)\in\omega\times\omega}$ associated with Eq. (\ref{example2}) is of the form: \[\A=\left(\begin{array}{lllllllllllllll}
 0 & 1 & 2 & 0 & 0 & 0 & 0 & 0 & 0 & 0 & 0 & 0 & 0 & 0 & ...\\
 2 & 1 & 0 & 1 & 0 & 0 & 0 & 0 & 0 & 0 & 0 & 0 & 0 & 0 & ...\\
 0 & 1 & 2 & 1 & 0 & 0 & 0 & 0 & 0 & 0 & 0 & 0 & 0 & 0 & ...\\
 2 & 1 & 0 & 1 & 2 & 1 & 0 & 0 & 0 & 0 & 0 & 0 & 0 & 0 & ...\\
 0 & 1 & 2 & 1 & 0 & 1 & 2 & 0 & 0 & 0 & 0 & 0 & 0 & 0 & ...\\
 2 & 1 & 0 & 1 & 2 & 1 & 0 & 1 & 0 & 0 & 0 & 0 & 0 & 0 & ...\\
 0 & 1 & 2 & 1 & 0 & 1 & 2 & 1 & 0 & 0 & 0 & 0 & 0 & 0 & ...\\
 2 & 1 & 0 & 1 & 2 & 1 & 0 & 1 & 2 & 1 & 0 & 0 & 0 & 0 & ...\\
 0 & 1 & 2 & 1 & 0 & 1 & 2 & 1 & 0 & 1 & 2 & 0 & 0 & 0 & ...\\
 2 & 1 & 0 & 1 & 2 & 1 & 0 & 1 & 2 & 1 & 0 & 1 & 0 & 0 & ...\\
 0 & 1 & 2 & 1 & 0 & 1 & 2 & 1 & 0 & 1 & 2 & 1 & 0 & 0 & ...\\
 2 & 1 & 0 & 1 & 2 & 1 & 0 & 1 & 2 & 1 & 0 & 1 & 2 & 1 & ...\vspace{-0.05in}\\
 . & . & . & . & . & . & . & . & . & . & . & . & . & . & ...\vspace{-0.1in}\\
 . & . & . & . & . & . & . & . & . & . & . & . & . & . & ...\vspace{-0.1in}\\
 . & . & . & . & . & . & . & . & . & . & . & . & . & . & ...
\end{array}\right).\]
The infinite Gauss-Jordan algorithm applied to $\A$ gives the sequence:
\[{\mathcal{H}}^{(0)}=(0,\ \frac{1}{2}, \ 1),  \ {\mathcal{H}}^{(1)}=\left(\begin{array}{llll} 0 & \frac{1}{2} & 1 & 0\\
 2 &      1        & 0 & 1\end{array}\right),\ {\mathcal{H}}^{(2)}=\left(\begin{array}{rlll} 2 &  1   & 0 & 0\\
-1 &  0   & 1 & 0\\
 0 &  0   & 0 & 1                                                     \end{array}\right)...
\]
After some point (here shown in the third term) the chain of submatrices of a QHF of $\A$ is constructed:
\[{\mathcal{H}}^{(2)}\!\!\mid_0=(2,\ 1), \ {\mathcal{H}}^{(2)}\!\!\mid_1=\left(\begin{array}{rlll} 2 &  1  & 0 \\
                                                                          -1 &  0   & 1 \\\end{array}\right), \ {\mathcal{H}}^{(2)}\!\!\mid_2=\left(\begin{array}{rlll} 2 &  1   & 0 & 0\\
                                             -1 &  0   & 1 & 0\\
                                              0 &  0   & 0 & 1                                                     \end{array}\right)...
\]
Following the notation of theorem \ref{main}, we conclude that $\delta_0=\delta_1=\delta_2=2, \  \delta_i=i,\ i\ge 2$. The QHF constructed by the infinite Gauss-Jordan elimination algorithm is of the form:
\[\QHF=\left(
\begin{array}{rlllrlllrllllll}
 2 & 1 & 0 & 0 & 0 & 0 & 0 & 0 & 0 & 0 & 0 & 0 & 0 & 0 & ...\\
 -1 & 0 & 1 & 0 & 0 & 0 & 0 & 0 & 0 & 0 & 0 & 0 & 0 & 0 & ...\\
 0 & 0 & 0 & 1 & 0 & 0 & 0 & 0 & 0 & 0 & 0 & 0 & 0 & 0 & ...\\
 0 & 0 & 0 & 0 & 2 & 1 & 0 & 0 & 0 & 0 & 0 & 0 & 0 & 0 & ...\\
 0 & 0 & 0 & 0 & -1 & 0 & 1 & 0 & 0 & 0 & 0 & 0 & 0 & 0 & ...\\
 0 & 0 & 0 & 0 & 0 & 0 & 0 & 1 & 0 & 0 & 0 & 0 & 0 & 0 & ...\\
 0 & 0 & 0 & 0 & 0 & 0 & 0 & 0 & 0 & 0 & 0 & 0 & 0 & 0 & ...\\
 0 & 0 & 0 & 0 & 0 & 0 & 0 & 0 & 2 & 1 & 0 & 0 & 0 & 0 & ...\\
 0 & 0 & 0 & 0 & 0 & 0 & 0 & 0 & -1 & 0 & 1 & 0 & 0 & 0 & ...\\
 0 & 0 & 0 & 0 & 0 & 0 & 0 & 0 & 0 & 0 & 0 & 1 & 0 & 0 & ...\\ 
 0 & 0 & 0 & 0 & 0 & 0 & 0 & 0 & 0 & 0 & 0 & 0 & 0 & 0 & ...\\
 0 & 0 & 0 & 0 & 0 & 0 & 0 & 0 & 0 & 0 & 0 & 0 & 2 & 1 & ...\vspace{-0.05in}\\
 . & . & . & . & . & . & . & . & . & . & . & . & . & . & ...\vspace{-0.1in}\\
 . & . & . & . & . & . & . & . & . & . & . & . & . & . & ...\vspace{-0.1in}\\
 . & . & . & . & . & . & . & . & . & . & . & . & . & . & ...
\end{array}
\right).
\]
By applying to the identity $\omega\times\omega$ matrix the same sequence of row elementary operations applied by the Gauss-Jordan elimination to the initial matrix $\A$, we obtain the nonsingular matrix:
\[\Q=\left(
\begin{array}{rrrrrrrrrrrll}
 1 & 1 & -1 & 0 & 0 & 0 & 0 & 0 & 0 & 0 & 0 & 0 & ...\\
 0 & -\frac{1}{2} & \frac{1}{2} & 0 & 0 & 0 & 0 & 0 & 0 & 0 & 0
   & 0 & ...\\
 -1 & 0 & 1 & 0 & 0 & 0 & 0 & 0 & 0 & 0 & 0 & 0 & ...\\
 0 & -1 & 0 & 1 & 0 & 0 & 0 & 0 & 0 & 0 & 0 & 0 & ...\\
 0 & \frac{1}{2} & -\frac{1}{2} & -\frac{1}{2} & \frac{1}{2} &
   0 & 0 & 0 & 0 & 0 & 0 & 0 & ...\\
 0 & 0 & 0 & -1 & 0 & 1 & 0 & 0 & 0 & 0 & 0 & 0 & ...\\
   0 & 0 & 0 & 1 & -1 & -1 & 1 & 0 & 0 & 0 & 0 & 0 & ...\\
 0 & 0 & 0 & 0 & 0 & -1 & 0 & 1 & 0 & 0 & 0 & 0 & ...\\
 0 & 0 & 0 & \frac{1}{2} & -\frac{1}{2} & 0 & 0 & -\frac{1}{2}
   & \frac{1}{2} & 0 & 0 & 0 & ...\\
 0 & 0 & 0 & 0 & 0 & 0 & 0 & -1 & 0 & 1 & 0 & 0 & ...\\
 0 & 0 & 0 & 0 & 0 & 0 & 0 & 1 & -1 & -1 & 1 & 0 & ...\\
 0 & 0 & 0 & 0 & 0 & 0 & 0 & 0 & 0 & -1 & 0 & 1 & ...\vspace{-0.05in}\\
 . & . & . & . & . & . & . & . & . & .  & . & . & ...\vspace{-0.1in}\\
 . & . & . & . & . & . & . & . & . & . & . & .  & ...\vspace{-0.1in}\\
 . & . & . & . & . & . & . & . & . & . & . & .  & ...
\end{array}
\right).\]
As expected, $\Q\cdot \A=\QHF$, thus preserving row equivalence. \\The indexing set of zero rows of $\QHF$ is $W=\{6+4k, k\in \naturals\}$ and so a Hamel basis of the left-null space of $\A$ is
$\{\e_{6+4k}-\e_{5+4k}-\e_{4+4k}+\e_{3+4k}\}_{k\in \naturals}$.
\\In view of $\QHF$, the set of inaccessible row lengths of $\RS(\A)$ is 
$\clengths_{\A}=\{4n,\ n\in\naturals\}$.
\\As $\defic(\A)=\defic(\QHF)=\card(\clengths_{\A})=\infty$, it follows that the fundamental solution set $\mbox{\boldmath$\xi$}=\{\mbox{\boldmath$\xi$}^{(s)}\}_{s\in \clengths_{\A}}$ of Eq. (\ref{example2}) is a Schauder basis of the space of its homogeneous solutions. Thus Eq. (\ref{sequence of fundamental solutions 1}) gives
\[\begin{array} {llccc}\!\!\!\!\!\!
\mbox{\boldmath$\xi$}^{(s)}=(0,0,...,0,\!\!&1, \!\!\!\!\! &\!\!\!\!\!\! -2, \!\!\!\!\!  &\!\!\!\! 1,     \!\!\!\!\!\! &\!\!\!\!\!  0,...,0,...)^T, \ \ s=0,4,8,12,...  \\ 
& \uparrow     &\uparrow         &\uparrow  \\
& s            &\!\!\! s\!+\!1      \!\!\!\!\!  & s\!+\!2        
\end{array} \] 
or $\mbox{\boldmath$\xi$}=\{\mbox{\boldmath$\xi$}^{(4n)},\ n\in\naturals\}$ with
\[\mbox{\boldmath$\xi$}^{(4n)}=\e_{4n}-2\e_{4n+1}+\e_{4n+2}.\]
By virtue of (\ref{GGHS}), the general solution of Eq. (\ref{example2}) is given by
\[y=\sum_{n=0}^{\infty}c_{n}(\e_{4n}-2\e_{4n+1}+\e_{4n+2}),\]
where $c_{n}, n=0,1,2,...$ are free constants.

\end{document}